\numberwithin{equation}{section}
\theoremstyle{plain}
\newtheorem{theorem}{Theorem}[section]
\newtheorem{lemma}[theorem]{Lemma}
\newtheorem{Observation}[theorem]{Observation}
\theoremstyle{definition}
\newtheorem{Def}[theorem]{Definition}
\newtheorem{example}[theorem]{Example}
\newtheorem{remark}[theorem]{Remark}
\newtheorem{?}[theorem]{Problem}
\newcommand{\N}{\mathbb{N}}
\newcommand{\Z}{\mathbb{Z}}
\newcommand\A{\mathsf{A}}
\renewcommand\P{\mathsf{P}}
\newcommand\D{\mathsf{D}}
\def\plat{\mathsf{plat}}
\def\hill{\mathsf{hill}}
\def\rmin{\mathsf{rmin}}
\def\lmax{\mathsf{lmax}}
\def\rdes{\mathsf{rdes}}
\def\dez{\mathsf{dez}}
\def\reduction{\mathrm{red}}
\def\xdes{\mathsf{xdes}}
\def\Der{\mathrm{Der}}
\def\DES{\hbox{\rm\textsc{des}}}
\def\DEZ{\hbox{\rm\textsc{dez}}}
\def\DEC{\hbox{\rm\textsc{dec}}}
\def\INC{\hbox{\rm\textsc{inc}}}
\def\REC{\hbox{\rm\textsc{rec}}}
\def\NIW{\hbox{\rm\textsc{niw}}}
\def\Pos{\mathrm{Pos}}
\def\id{\mathrm{id}}
\def\ID{\mathrm{ID}}
\def\Fix{\hbox{\rm\textsc{fix}}}
\def\fix{\mathsf{fix}}
\def\lseg{\mathsf{lseg}}
\def\seg{\mathsf{seg}}
\def\max{\mathrm{max}}
\def\min{\mathrm{min}}
\def\exc{\mathsf{exc}}
\def\des{\mathsf{des}}
\def\aexc{\mathsf{aexc}}
\def\S{\mathfrak{S}}
\def\W{\mathcal{W}}
\def\eff{\mathrm{eff}}
\def\ldes{\mathsf{ldes}}
\def\a{\mathfrak{a}}
\def\b{\mathfrak{b}}
\def\ASC{\hbox{\rm\textsc{asc}}}
\def\max{\operatorname{max}}
\def\s{\operatorname{{\bf s}}}
\def\df{\operatorname{df}}
\def\pf{\operatorname{pf}}
\def\s{\operatorname{{\bf s}}}
\def\U{\underline}
\def\B{\bf}
\def\ee{\mathbf{e}}
\def\neg#1{{{{\bar#1}}}}
\def\boxit#1{\leavevmode\hbox{\vrule\vtop{\vbox{\kern.33333pt\hrule\kern1pt\hbox{\kern1pt\vbox{#1}\kern1pt}}\kern1pt\hrule}\vrule}}
\begin{document}

\title[$k$-arrangements]{ $k$-arrangements, statistics and  patterns}

\author[S. Fu]{Shishuo Fu}
\address[Shishuo Fu]{College of Mathematics and Statistics, Chongqing University, Huxi campus, Chongqing 401331, P.R. China}
\email{fsshuo@cqu.edu.cn}

\author[G.-N. Han]{Guo-Niu Han}
\address[Guo-Niu Han]{I.R.M.A., UMR 7501, Universit\'e de Strasbourg et CNRS, 7 rue Ren\'e Descartes, F-67084 Strasbourg, France}
\email{guoniu.han@unistra.fr}

\author[Z. Lin]{Zhicong Lin}
\address[Zhicong Lin]{Research Center for Mathematics and Interdisciplinary Sciences, Shandong University, Qingdao 266237, P.R. China}
\email{linz@sdu.edu.cn}
\date{\today}

\begin{abstract} The $k$-arrangements  are permutations whose fixed points are $k$-colored. We prove enumerative results related to statistics and patterns on $k$-arrangements, confirming  several conjectures by Blitvi\'c and Steingr\'imsson.  In particular, one of their conjectures regarding the equdistribution of the number of descents over the derangement form and the permutation form of $k$-arrangements is strengthened in two interesting ways. Moreover, as one application of the so-called Decrease Value Theorem, we calculate the generating function for a symmetric pair of Eulerian statistics over permutations arising in our study. 
\end{abstract}

\keywords{Eulerian polynomials; Catalan numbers; derangements; fixed points; $k$-arrangements}

\maketitle


\section{Introduction}\label{sec:intro} 

In their course~\cite{bs} of interpreting moments of probability measures on the real line, Blitvi\'c and Steingr\'imsson introduced the {\em$k$-arrangements}, which are permutations with $k$-colored fixed points. They posed several  conjectures related to the equidistributions of statistics and enumeration of patterns on $k$-arrangements. The purpose of this note is to address these enumeration conjectures.
Let $\S_n$ be the set of all permutations of $[n]:=\{1,2,\ldots,n\}$. 
For each permutation 
$\sigma=\sigma(1)\sigma(2)\cdots\sigma(n) \in\S_n$ 
let $\Fix(\sigma):=\{i\in[n]:\sigma(i)=i\}$ 
be the set of {\em fixed points} of $\sigma$. 
For any nonnegative integer $k$, a {\em$k$-arrangement} of $[n]$ is a 
pair $\mathfrak{a}=(\pi,\phi)$ of a permutation $\pi\in\S_n$ 
and an arbitrary function
$\phi:\Fix(\pi)\rightarrow\{\neg i: 1\leq i\leq k\}$, 
where $\neg i:=-i$. Note that for $k=0$,  there is no function $\phi:\Fix(\pi)\rightarrow\emptyset$ unless $\Fix(\pi)=\emptyset$.  We will refer to $\pi$ as the {\em base permutation} of $\a$. Let $\A_n^k$ denote the set of $k$-arrangements of $[n]$.  
For instance, the $0$-arrangements and $1$-arrangements can be identified with derangements and permutations, respectively. 
The $2$-arrangements, also called  {\it decorated permutations} by Postnikov~\cite[Def.~13.3]{po}, were investigated previously from different aspects~\cite{cor,lwz,po,wi}.  

Blitvi\'c and Steingr\'imsson~\cite{bs} introduced two different representations of $k$-arrangements, called {\em permutation form} and {\em derangement form}. 
Define the {\em reduction} (resp. {\em positive reduction})  of a word $w$ over integers, denoted by $\reduction(w)$ (resp. $\reduction^+(w)$),  to be the word obtained from $w$ by replacing all instances of 
the $i$-th smallest letter (resp. positive letter) of $w$ with~$i$, for all $i$. For example, we have $\reduction(55\neg12\neg1\neg2)=442321$
and $\reduction^+(55\neg12\neg1\neg2)=22\neg11\neg1\neg2$.
For a $k$-arrangement $\a=(\pi,\phi)$ of $[n]$, the {\em derangement form} (resp. {\em permutation form}) of $\a$, denoted $\df_k(\a)$ (resp. $\pf_k(\a)$),  is the word obtained from $\pi$ by changing $\pi(i)$ to $\phi(i)$ 
for each $i\in\Fix(\pi)$  (resp. $i\in\Fix(\pi)$ such that $\phi(i)\neq \neg k$) and then applying the positive reduction. 
For instance,  let $\a$ be the $3$-arrangement $(\pi, \phi)$ with $\pi=75{\bf3}{\bf4}1{\bf6}2$ and $\phi(3)=\neg 1$, $\phi(4)=\neg 3$ and $\phi(6)=\neg 3$. 
Then $\a$ has derangement form $43\neg1\neg31\neg32$ whose {\em derangement part} is $\Der(\a)=4312$, and 
permutation form $64\neg13152$ whose {\em permutation part} is $643152$. 
The set of permutation forms (resp.~derangement forms) representing elements in $\A_n^k$ is denoted $\P_n^k$ (resp.~$\D_n^k$). Note that $\P_n^1=\S_n$.
The above two representations of $k$-arrangements provide	
two bijections between these three sets:
	$$\pf_k: \A_n^k\rightarrow \P_n^k \quad \text{and} \quad \df_k: \A_n^k\rightarrow \D_n^k.$$

For a word $w=w_1w_2\cdots w_n$ over $\Z$, an index $i\in[n-1]$ is called a 
{\it descent (position)} of $w$ if $w_i>w_{i+1}$. 
Let $\DES(w)$ (resp. $\des(w)$) be the set (resp. the number) of descents of a word $w$, and $\Pos(w)$ be the {\em positive subword} of $w$, i.e., the subword that is consisted of all the positive letters in $w$. It is well known (cf.~\cite{fs}) that the {\em Eulerian polynomials} $A_n(t)$ can be defined by
$ A_n(t)=\sum_{\pi\in\S_n}t^{\des(\pi)} $.
We extend some classical statistics on 
permutations or words
to  $k$-arrangements $\a=(\pi, \phi)$ by 
\begin{align*}
\fix(\a)&:=|\Fix(\pi)|, & \fix_i(\a)&:=|\{j\in\Fix(\pi):\phi(j)=\neg i\}|, \\
\DES(\a)&:=\DES(\pf_k(\a)), &     \des(\a)&:=\des(\pf_k(\a)),\\
\DEZ(\a)&:=\DES(\df_k(\a)),  &    \dez(\a)&:=\des(\df_k(\a)),\\
\Der(\a)&:=\Pos(\df_k(\a)).  &
\end{align*}
We split $\A_n^k$ into small subsets according to the multiplicity of the image of the funciton $\phi$ in the following way.
Given an array of nonnegative integers ${\bf m}=(m_1, m_2, \cdots, m_k)$,
we let 
$$
\A_n^k({\bf m}) := \{
	\a \in \A_n^k : \fix_i(\a)=m_i  \text{\ for $1\leq i\leq k$}
\}.
$$
In the previous work of Foata and the second author \cite{fh1,fh3},  derangement forms in $\D_n^1$ were already studied under the term {\em shuffle class}.
They also introduced the $\DEZ$ and $\dez$ statistics on permutations,
and constructed the bijection $\Phi=\Phi_1$ to show the following equidistribution, in the case of $k=1$.

\begin{theorem}\label{euler:k-set}
Let $n,k\geq1$ and 
${\bf m}=(m_1, m_2, \cdots, m_k)$ be an array of nonnegative integers.
For any permutation $\tau\in \S_k$,
	there exists a bijection $\Phi_k: \A_n^k({\bf m}) \rightarrow \A_n^k({\bf m'})$ such that for every $\a\in\A_n^k(\bf m)$,
\begin{equation}\label{eq:equieuler}
(\DEZ,  \Der)~\a = (\DES,  \Der)~\Phi_k(\a),
\end{equation}
	where ${\bf m'} = (m_{\tau(1)}, m_{\tau(2)}, \ldots, m_{\tau(k)})$.
\end{theorem}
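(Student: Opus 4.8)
The plan is to factor the desired bijection as $\Phi_k=\Theta\circ\Psi$, where $\Psi\colon\A_n^k({\bf m})\to\A_n^k({\bf m'})$ merely recolors fixed points and preserves the pair $(\DEZ,\Der)$, while $\Theta\colon\A_n^k({\bf m'})\to\A_n^k({\bf m'})$ converts $(\DEZ,\Der)$ into $(\DES,\Der)$. Granting both, $\Psi$ carries the derangement-form data of $\a$ unchanged to $\Psi(\a)$, and $\Theta$ then turns it into the permutation-form data of $\Phi_k(\a)$, which is precisely \eqref{eq:equieuler}. The point of this split is that it isolates the role of $\tau$ entirely inside $\Psi$: the conversion $\Theta$ is nothing but the $\tau=\mathrm{id}$ case of the theorem for the target content ${\bf m'}$, and for $k=1$ it reduces to the Foata--Han bijection $\Phi_1$.

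To construct $\Psi$ I would first pin down how $\DES(\df_k(\a))$ depends on the coloring $\phi$ when the base permutation $\pi$ is held fixed. In $\df_k(\a)$ every non-fixed position carries a positive letter and every fixed position a color $\neg i$, so a descent can occur only (i) between two positive letters (inside $\Der(\a)$), (ii) from a positive letter down to a color, or (iii) between two colors at consecutive fixed positions, and in case (iii) it is a descent exactly when the color strictly increases, since $\neg i>\neg j\iff i<j$. Cases (i) and (ii), and likewise $\Der(\a)=\Pos(\df_k(\a))$, depend on $\pi$ alone. Hence, with $\pi$ fixed, the coloring affects only the type-(iii) descents, which on each maximal block of consecutive fixed points are recorded by the strict-ascent set of the color word. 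The number of colorings of these blocks with prescribed content whose strict ascents are contained in a prescribed set equals a count of nonnegative integer matrices with given row sums (the content) and column sums (the lengths of the weakly decreasing segments cut out by that set); such a count is symmetric in the row sums, and inclusion--exclusion upgrades this to the ``exactly $T$'' count. As ${\bf m'}$ is a rearrangement of ${\bf m}$, this supplies, for each $\pi$, a bijection between colorings of content ${\bf m}$ and of content ${\bf m'}$ with the same type-(iii) descent set; assembling these over all $\pi$ yields $\Psi$, which fixes $\pi$ and so satisfies $(\DEZ,\Der)\,\a=(\DEZ,\Der)\,\Psi(\a)$.

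For $\Theta$ I would generalize $\Phi_1$. The derangement and permutation forms of a $k$-arrangement differ only in their color-$k$ fixed points: these are pushed to the bottom as $\neg k$ in $\df_k$ but restored to their natural positive values in $\pf_k$, whereas the colors $\neg 1,\dots,\neg{(k-1)}$ stay negative in both. Thus $\Theta$ should run $\Phi_1$ on the sub-data formed by $\Der$ together with the color-$k$ fixed points---promoting the latter from ``all at the bottom'' to ``their natural slots'' and reshuffling so as to preserve both the descent set and the derangement part---while transporting the colors $\neg 1,\dots,\neg{(k-1)}$ along as frozen spectators.

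The hard part is exactly this last manoeuvre, because the spectators are not inert for descents. A color-$k$ fixed point lies below every $\neg j$ with $j<k$ in the derangement form but above every such $\neg j$ once it is promoted, so each adjacency between a color-$k$ point and a spectator color flips its descent status under $\Theta$. I therefore expect the genuine work to be a reproof of $\Phi_1$ in the enriched alphabet $\{\neg k\}\cup\Z_{>0}$ shuffled with the frozen letters $\{\neg 1,\dots,\neg{(k-1)}\}$, verifying that the reshuffling underlying $\Phi_1$ absorbs these boundary flips so that $\DES(\pf_k(\Theta(\b)))=\DES(\df_k(\b))$ with $\Der$ unchanged; by contrast the recoloring $\Psi$ is routine. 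Composing this enriched $\Theta$ with $\Psi$ then delivers $\Phi_k$ and establishes \eqref{eq:equieuler}.
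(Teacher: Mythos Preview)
Your overall architecture---recolor first ($\Psi$), then convert $\DEZ$ into $\DES$ ($\Theta$)---is exactly the paper's: Lemma~\ref{lemma:mul1} plays the role of your $\Psi$, and the explicit five-step construction plays the role of your $\Theta$. Your argument for $\Psi$ is fine (if non-constructive; the paper simply invokes the explicit word bijection~$\theta$ of Lemma~\ref{lemma:mul0}).

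The gap is in $\Theta$. You insist on the $\tau=\mathrm{id}$ case, hiding the colors $\neg1,\ldots,\neg(k-1)$ and running $\Phi_1$ on the visible subword of positives together with the $\neg k$'s. You correctly diagnose the obstacle: in $\df_k$ the letter $\neg k$ lies \emph{below} every hidden $\neg j$, while after promotion in $\pf_k$ it lies \emph{above} them, so every visible--hidden adjacency involving a $\neg k$ flips its descent status. Your hope that ``the reshuffling underlying $\Phi_1$ absorbs these boundary flips'' does not pan out: $\Phi_1$ acts only on the visible subword and preserves \emph{its} descent set; it has no access to the hidden positions and therefore cannot compensate for flips occurring at visible--hidden boundaries. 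Concretely, after $\Phi_1$ every visible letter is positive, so a visible--hidden boundary with the visible letter on the left is \emph{always} a descent in $\pf_k(\Theta(\b))$; in $\df_k(\b)$ that same boundary is a descent only when the visible letter happened to be positive rather than $\neg k$. These generically disagree, and no rearrangement of the visible letters alone can fix it.

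The paper's key idea, which you are missing, is not to aim for $\tau=\mathrm{id}$ at all. Since your $\Psi$ already reduces the theorem to any single $\tau_0$, the paper chooses the cyclic shift $\tau_0=23\cdots k\,1$ and first relabels $\neg j\mapsto -j+1$. Now the ``variable'' color is $0$ (the former $\neg1$), which sits \emph{above} every hidden letter $\neg1,\ldots,\neg(k-1)$ (the former $\neg2,\ldots,\neg k$). Thus every visible letter is $\geq0$ and every hidden letter is $<0$, so a visible--hidden boundary is a descent precisely when the visible letter is on the left---independently of the actual visible values. The boundary-flip problem evaporates, $\Phi_1$ applied to the visible subword preserves the full-word $\DES$, and after renaming $0\mapsto\neg k$ one lands in $\A_n^k({\bf m'})$. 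In short: keep your $\Psi$, but replace your $\Theta$ by the cyclic-shift version; then no ``enriched-alphabet reproof'' of $\Phi_1$ is needed.
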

The equidistribution of $\des$ over
$\P_n^k$ and $\D_n^k$, 
was first conjectured in~\cite[Conj.~1]{bs}, 
which is generalized in two directions:
a set-valued extension, as stated in
Theorem \ref{euler:k-set},  
and a symmetrical
generalization as stated in the next theorem.
Notice that Theorem \ref{double-euler}
is new even for $k=1$ over $\S_n$, since our triple
equidistribution on $\S_n$ can not be proven using the bijection in~\cite[Thm.~1.1]{fh1}.

\begin{theorem}\label{double-euler}
Let $n,k\geq1$ and 
${\bf m}=(m_1, m_2, \cdots, m_k)$ be an array of nonnegative integers.
	For any permutation $\tau\in \S_{k-1}$,
	there exists a bijection $\Psi_k: \A_n^k({\bf m}) \rightarrow \A_n^k({\bf m'})$ such that for every $\a\in\A_n^k(\bf m)$,
\begin{equation}\label{eq:double-euler}
(\des, \dez, \Der)~\a = (\dez, \des, \Der)~\Psi_k(\a).
\end{equation}
	where ${\bf m'} = (m_{\tau(1)}, m_{\tau(2)}, \ldots, m_{\tau(k-1)}, m_k)$.
\end{theorem}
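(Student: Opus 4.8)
The plan is to fix the derangement part and argue fiber by fiber. For a fixed derangement word $d$ of length $m$, with $m+\sum_i m_i=n$, set $F_d:=\{\a\in\A_n^k(\mathbf{m}):\Der(\a)=d\}$. I would first record the elementary fact that placing the pattern $d$ on the complement of an arbitrary $M$-subset never creates an accidental fixed point: if the non-fixed positions receive the values $[n]\setminus S$ in the order dictated by $d$, then the position of rank $a$ carries the value of rank $d(a)$, and $d(a)=a$ is impossible because $d$ is a derangement. Hence $F_d$ is parametrized freely by a choice of fixed-point set $S\subseteq[n]$ with $|S|=M:=\sum_i m_i$ together with a coloring $c\colon S\to\{1,\dots,k\}$ of content $\mathbf{m}$. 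Since $\Psi_k$ must preserve $\Der$, it necessarily acts within each $F_d$, so it suffices to build, for every $d$, a content-permuting bijection on the pairs $(S,c)$ that swaps $\des$ and $\dez$.

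Next I would express $\dez$ and $\des$ as local statistics on $(S,c)$. Writing $\df_k(\a)$ as the word obtained by inserting the colored fixed points (as negative letters $-c(s)$) into the gaps of $d$, the descent rules are transparent: a (derangement,fixed) adjacency is always a descent, a (fixed,derangement) adjacency never is, a (fixed,fixed) adjacency is a descent exactly when the left color index is smaller, and a surviving (derangement,derangement) adjacency is a descent iff it was one in $d$. The statistic $\des=\des(\pf_k(\a))$ obeys the same rules for colors $1,\dots,k-1$, but color $k$ is special: in $\pf_k$ the color-$k$ fixed points are not negated but kept and re-ranked among the derangement values by the positive reduction. This promotion is the sole source of the discrepancy between $\des$ and $\dez$; indeed when $m_k=0$ one has $\df_k(\a)=\pf_k(\a)$ and $\des=\dez$ identically, which is a useful sanity check and explains why $\tau$ may only permute the first $k-1$ classes.

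With these formulas in hand I would assemble $\Psi_k=\Lambda_\tau\circ\Theta$ from two moves. The first is a core bijection $\Theta$ (the case $\tau=\id$) that swaps $\des$ and $\dez$ while fixing $d$ and the entire coloring $c$; morally it relocates the color-$k$ fixed points so as to trade their positive, re-ranked contribution to $\des$ for the negative, boundary contribution governing $\dez$, using that colors $1,\dots,k-1$ contribute identically to both statistics. A small check on $\S_3$ already exhibits this relocation ($213\leftrightarrow 321$ with $132$ fixed). The second is a content-permuting bijection $\Lambda_\tau$ realizing an arbitrary $\tau\in\S_{k-1}$ on the first $k-1$ color classes while fixing which positions are color-$k$ and preserving $(\des,\dez,\Der)$. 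Since, after a short comparison of the two descent rules, those classes enter $\dez$ and $\des$ through the \emph{same} quantity—the number of index-ascents among adjacent pairs of fixed points both colored below $k$—I would reduce $\Lambda_\tau$ to the classical symmetry of the descent (equivalently major-index) distribution over multiset rearrangements under permuting the content, i.e.\ to the symmetry of the $q$-multinomial coefficient, applied run by run with the freedom to redistribute letters across runs subject to keeping the totals fixed. Inverting each factor shows $\Psi_k$ is a bijection.

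The main obstacle is the construction and verification of $\Theta$. Unlike $\dez$, the statistic $\des$ is genuinely value-sensitive: through the positive reduction the color-$k$ fixed points are interleaved with the actual derangement values, so the number of descents they create depends on the global configuration, not merely on the gap into which they fall. Designing a position-shifting rule that converts this global re-ranking contribution into the purely local boundary contribution of $\dez$—while leaving $d$ and every color untouched and staying inside $F_d$—is the delicate point, and I expect to carry it out by induction on $m_k$, peeling off one color-$k$ fixed point at a time and checking that exactly one descent is traded between the two statistics at each step. Confirming that this joint swap genuinely does not follow from the set-valued bijection of Theorem~\ref{euler:k-set} is precisely what makes the argument new, already for $k=1$.
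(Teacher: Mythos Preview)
Your outline correctly isolates the difficulty: the content-permuting step $\Lambda_\tau$ is essentially the paper's Lemma~2.3 and is fine, but the core bijection $\Theta$ swapping $\des$ and $\dez$ is never actually built. You identify that the color-$k$ fixed points are re-ranked among the derangement values in $\pf_k$ and that this is what makes $\des$ value-sensitive, but your proposed cure---``peel off one color-$k$ fixed point at a time and check that exactly one descent is traded''---is only a hope. It is not clear how to choose where to relocate each peeled point, nor why such a greedy step can be made into a bijection, nor why the exchange is exactly one descent at each step rather than zero or two depending on the local configuration. This is the whole content of the theorem, and it is missing.

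For comparison, the paper's proof pivots on a finer invariant than $\Der$: it fixes the \emph{weak derangement part} $\Der_k(\a)$, i.e.\ the derangement form with only the $\neg k$'s removed, so that colors $1,\dots,k-1$ are frozen in place. Relative to this fixed word one classifies the $m{+}1$ slots for $\neg k$ into four types according to the descent/excedance pattern of the two flanking letters; the crucial Lemma~2.4 shows that type~I and type~II slots alternate and occur in equal number, and that $\des$ and $\dez$ differ from $\des(\Der_k(\a))$ by $t_1^++t_3^+$ and $t_2^++t_3^+$ respectively. The bijection $\Theta$ is then simply the involution that swaps the multiplicities in paired type~I/type~II slots. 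Your inductive sketch gives no analogue of this structural alternation, and fibering only by $\Der$ (letting the positions of colors $1,\dots,k-1$ float) forfeits exactly the rigidity that makes the slot-swap work.
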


Taking the identity permutation $\tau$, Theorems \ref{euler:k-set} and \ref{double-euler} imply that there exists two bijections $\a \mapsto \a'$ and $\a\mapsto \a''$  from $\A_n^k$ onto itself such that
\begin{align*}
	(\DEZ,  \Der)~\a &= (\DES,  \Der)~\a',\\
	(\des, \dez, \Der)~\a &= (\dez, \des, \Der)~\a''.
\end{align*}

\medskip

We will also investigate the enumerative aspect of pattern avoiding $k$-arrangements.
We say a word $w=w_1w_2\cdots w_n\in\Z^n$ {\em avoids the pattern} $\sigma=\sigma_1\sigma_2\cdots \sigma_k\in\S_k$ ($k\leq n$) if there does not exist $i_1<i_2<\cdots<i_k$ such that  $\reduction(w_{i_1}w_{i_2}\cdots w_{i_k})=\sigma$. For a set $\mathcal{W}$ of words, let $\mathcal{W}(\sigma)$ be the set of $\sigma$-avoiding words in $\mathcal{W}$. Two patterns $\sigma$ and $\pi$ are said to be {\em Wilf-equivalent} over $\mathcal{W}$ if $|\mathcal{W}(\sigma)|=|\mathcal{W}(\pi)|$. 
One of the most famous enumerative results in pattern avoiding permutations, attributed to MacMahon and Knuth (cf.~\cite{ki,ss}), is that $|\S_n(\sigma)|=C(n)$ for each pattern $\sigma\in\S_3$, where 
$$C(n):=\frac{1}{n+1}\binom{2n}{n}
$$ is the $n$-th {\em Catalan number}. The study of pattern avoiding derangements was  initiated by Robertson,  Saracino and  Zeilberger~\cite{rsz}  and further generalized by others in~\cite{eli,ep,ck}. Since the $k$-arrangements in permutation form and derangement form can be considered as generalizations of permutations and derangements, we study $k$-arrangements of both forms avoiding a single  pattern of length $3$. In the case of permutation form, we verify all the enumerative conjectures (see Section \ref{sec:5}) posed by Blitvi\'c and Steingr\'imsson~\cite{bs}, while in the derangement form only one Wilf-equivalence is found
and reproduced next.
\begin{theorem}\label{der:wilf}
For $n\geq1$, we have $|\D_n^1(321)|=|\D_n^1(132)|$. In other words, the pattern $321$
 is Wilf-equivalent to $132$ on $1$-arrangements in derangement form. Moreover, we have the algebraic generating function for $|\D_n^1(321)|$:
 \begin{equation}\label{gen:321der}
 1+\sum_{n\geq1}|\D_n^1(321)|x^n=\frac{1-3x+3x^2+2x^3+(x^2+x-1)\sqrt{1-4x}}{2x^2(1-x)(2+x)}.
 \end{equation}
 \end{theorem}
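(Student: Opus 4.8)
The plan is to reduce the whole statement to a refined count of pattern-avoiding \emph{derangements} carrying a single catalytic variable, and then to compute one generating function by the kernel method.

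\emph{Word model.} First I would make $\D_n^1$ completely explicit. Since $k=1$, every fixed point is recoloured by the unique letter $-1$, so after positive reduction a derangement form is a word $w=w_1\cdots w_n$ whose positive entries are precisely $1,2,\dots,m$ (each once, where $m$ is their number), read left to right forming a derangement $d$ of $[m]$ (the $j$-th positive entry is never $j$), while the remaining $f=n-m$ entries equal $-1$ and may occupy arbitrary positions. Conversely every such word lies in $\D_n^1$, because restoring the omitted values on the positive positions keeps them a fixed-point-free permutation of that position set. Hence $\df_1$ identifies $\D_n^1$ with the pairs consisting of a derangement $d$ of $[m]$ together with an insertion of $f$ indistinguishable copies of $-1$ into the gaps of $d$.

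\emph{Patterns become gap conditions.} Since $-1$ is the unique smallest letter and both $321$ and $132$ have distinct entries, at most one letter of an occurrence can be a $-1$, and it can only play the role of the minimal entry. Working this out, $w$ avoids $321$ iff $d$ avoids $321$ and no $-1$ has a positive descent entirely to its left; equivalently every $-1$ must be inserted into one of the first $a(d)+1$ gaps of $d$, where $a(d)$ is the length of the initial increasing run of $d$. Dually, $w$ avoids $132$ iff $d$ avoids $132$ and every $-1$ lies among the last $b(d)+1$ gaps, where $b(d)$ is the length of the final increasing run. As $f$ identical letters enter $r+1$ gaps in $\binom{f+r}{r}$ ways and $\sum_{f\ge0}\binom{f+r}{r}x^{f}=(1-x)^{-r-1}$, summing over $f$ gives
\begin{equation*}
1+\sum_{n\ge1}|\D_n^1(321)|\,x^n=\frac{1}{1-x}\,\mathcal{D}^{321}\!\Bigl(x,\tfrac{1}{1-x}\Bigr),\qquad
\mathcal{D}^{321}(x,u):=\sum_{m\ge0}\ \sum_{d\in\mathcal{D}_m(321)}x^{m}u^{a(d)},
\end{equation*}
together with the identical formula for $132$, where $\mathcal{D}^{132}(x,u)$ records $b(d)$ instead and $\mathcal{D}_m(\sigma)$ denotes the $\sigma$-avoiding derangements of $[m]$.

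\emph{The two reductions.} The Wilf-equivalence now follows from the single bivariate identity $\mathcal{D}^{321}(x,u)=\mathcal{D}^{132}(x,u)$, i.e.\ the joint distribution of (size, initial increasing run) over $321$-avoiding derangements equals that of (size, final increasing run) over $132$-avoiding derangements; I would prove it by an explicit size-preserving bijection $\mathcal{D}_m(321)\to\mathcal{D}_m(132)$ matching these runs, or by computing both series. For the displayed formula I would compute $\mathcal{D}^{321}(x,u)$ from the two-increasing-subsequence (Dyck path) structure of $321$-avoiders by a first-return decomposition with $u$ marking the initial run, solve the resulting functional equation by the kernel method, then set $u=1/(1-x)$ and simplify; the factor $\sqrt{1-4x}$ is the expected Catalan signature and the denominator $2x^2(1-x)(2+x)$ should emerge from clearing the kernel.

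\emph{Main obstacle.} The real work lies in the third step. The run statistic forces a catalytic variable into an already non-product class, so $\mathcal{D}^{321}(x,u)$ must be extracted by a kernel-type argument rather than a plain recursion. Part of the asymmetry in the target identity is only cosmetic: the reverse--complement involution preserves derangements and $321$-avoidance while interchanging the initial and final increasing runs, so the left-hand side may equally be written using the final run. What remains, comparing $321$-avoiders with $132$-avoiders, is not cosmetic, because these patterns sit in different symmetry classes and no symmetry of the ambient words carries one family of derangement forms onto the other (ordinary reversal and complement already destroy the derangement condition). Producing the run-preserving bijection $\mathcal{D}_m(321)\to\mathcal{D}_m(132)$, or a second kernel computation showing the two refined series coincide, is therefore the crux of the argument.
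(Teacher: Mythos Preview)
Your proposal is correct and matches the paper's approach: the same reduction to derangements weighted by the initial/final increasing run (the paper's Observation~5.7, with your $a(d)=\ldes(d)$ and $b(d)=\rdes(d)$), and the same Dyck-path decomposition for the refined series $\mathcal{D}^{321}(x,u)$. For the run-preserving bijection $\mathcal{D}_m(321)\to\mathcal{D}_m(132)$ that you correctly flag as the crux, the paper supplies it as Knuth's classical $321\leftrightarrow132$ bijection composed with inversion (Lemma~5.6), invoking Elizalde--Pak for $\fix$-preservation (so derangements go to derangements) and Claesson--Kitaev for $\ldes\mapsto\lmax$, which equals $\rdes$ after inverting since $\rmin=\rdes$ on $132$-avoiders.
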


The rest of this paper is organized as follows. In Section~\ref{sec:3}, we provide explicit bijections $\Phi_k$ and $\Psi_k$ for proving Theorems~\ref{euler:k-set} and \ref{double-euler}. In Section~\ref{sec:4}, using the so-called Decrease Value Theorem, we calculate the generating function for a symmetric pair of Eulerian statistics $\des$ and $\dez$ over permutations. The enumeration of pattern avoiding $k$-arrangements are carried out in Section~\ref{sec:5}, proving all the connections suspected by Blitvi\'c and Steingr\'imsson, as well as 
Theorem \ref{der:wilf}.


%
\section{Constructions of the bijections \texorpdfstring{$\Phi_k$ and $\Psi_k$}{Phi-k and Psi-k}}\label{sec:desdez:sym}
\label{sec:3} 
In this section we describe explicit constructions of bijections $\Phi_k$ and $\Psi_k$ mentioned in Section~\ref{sec:intro}, and then prove Theorems~\ref{euler:k-set} and \ref{double-euler}.
Foata and the second author have constructed a $\DES$-preserving bijection $\Phi$ between $\D_n^1$ and $\P_n^1$ in a different form~(see \cite[Thm.~1.1]{fh1}). 
The reader is referred to \cite{fh1} for the definition and properties of $\Phi$. Our general bijection $\Phi_k$ is constructed by using $\Phi$ composed with other simple transformations, including the following multiplicity changing bijection $\theta$ 
\cite[Section 4]{han1992}. 
\begin{lemma}\label{lemma:mul0}
Let  
${\bf m}=(m_1, m_2, \cdots, m_k)$ be an array of nonnegative integers
and 
${\bf n}=(n_1, n_2, \cdots, n_k)$ a rearrangement of ${\bf m}$.
There exists a bijection 
	$\theta: R({\bf m}) \rightarrow R({\bf n})$ such that for each $w \in R({\bf m})$,
$$
	\DES(w) = \DES(\theta(w)),
	$$
	where $R({\bf m})$ (resp. $R({\bf n})$) denotes the set of all words on $k$ linearly ordered letters $a_1<a_2<\cdots<a_k$, containing exactly $m_i$ (resp. $n_i$) copies of the letter $a_i$ for all $i=1,2,\ldots, k$.
\end{lemma}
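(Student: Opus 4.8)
The plan is to reduce the statement to the case where $\mathbf n$ is obtained from $\mathbf m$ by swapping two \emph{adjacent} multiplicities, and then to exhibit an explicit $\DES$-preserving involution for that elementary step. Since any rearrangement $\mathbf n$ of $\mathbf m$ is reached by a sequence of adjacent transpositions, and a composite of $\DES$-preserving bijections is again $\DES$-preserving, it suffices to build, for each $j$, a $\DES$-preserving bijection $R(\ldots,m_j,m_{j+1},\ldots)\to R(\ldots,m_{j+1},m_j,\ldots)$ that leaves the multiplicities of all letters other than $a_j$ and $a_{j+1}$ untouched.

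First I would record a localization observation for such a step: in any $w$, a descent whose two sides are not both in $\{a_j,a_{j+1}\}$ is insensitive to exchanging $a_j$ with $a_{j+1}$. Indeed, for $a_\ell$ with $\ell<j$ one has $a_\ell<a_j,a_{j+1}$, and for $\ell>j+1$ one has $a_\ell>a_j,a_{j+1}$, so the comparison of a $\{a_j,a_{j+1}\}$-letter with any neighbor outside $\{a_j,a_{j+1}\}$ depends only on that neighbor. Since maximality forces an outside letter between any two $\{a_j,a_{j+1}\}$-blocks, the only descents that can change lie strictly inside the maximal factors of $w$ built from letters of $\{a_j,a_{j+1}\}$. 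Thus it is enough to transform each such factor separately, keeping its length and position, while preserving its internal descent set and interchanging the numbers of $a_j$'s and $a_{j+1}$'s inside it.

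The heart of the construction is therefore an involution on two-letter words over $\{x<y\}$ (with $x=a_j$, $y=a_{j+1}$) that preserves the descent set and swaps $\#x$ with $\#y$. Writing such a factor by its maximal weakly increasing runs $R_1\cdots R_{s+1}$, each $R_t=x^{p_t}y^{q_t}$, the run boundaries are exactly the descents, which force $q_t\ge 1$ for $t\le s$ and $p_t\ge 1$ for $t\ge 2$. I would \emph{freeze} these forced boundary letters (the $y$ ending each non-final run and the $x$ starting each non-initial run) and complement the \emph{free} letters of each run, sending a free block $x^a y^b$ to $x^b y^a$. Concretely this replaces $R_1$ by $x^{q_1-1}y^{p_1+1}$, each interior run by $x^{q_t}y^{p_t}$, and $R_{s+1}$ by $x^{q_{s+1}+1}y^{p_{s+1}-1}$, while for $s=0$ the single run is complemented outright. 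One then checks that every new run is again weakly increasing of the same length, that the forced descent letters are untouched so the descent set is preserved exactly, that the frozen letters contribute equally to $\#x$ and $\#y$ while the free letters are complemented so $\#x$ and $\#y$ are interchanged, and that applying the rule twice restores $w$, so it is self-inverse. Applying it factorwise yields the bijection for one adjacent transposition.

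The step I expect to be the main obstacle is the exact preservation of the descent \emph{set} at run boundaries: the forced/free bookkeeping must guarantee that no boundary descent is destroyed and no new internal descent is created, which is exactly why one freezes the descent-tops and descent-bottoms instead of complementing a whole run. As an independent check of the existence claim I would also note a counting argument: for any $A\subseteq[n-1]$ the number of words of content $\mathbf c$ with $\DES\subseteq A$ equals $[\,z_1^{c_1}\cdots z_k^{c_k}\,]\prod_b h_{\lambda_b}(z_1,\dots,z_k)$, where $\lambda$ is the composition of $n$ whose partial sums give $A$ and $h$ is the complete homogeneous symmetric polynomial; being symmetric in the $z_i$, this count is invariant under permuting $\mathbf c$, and M\"obius inversion over the Boolean lattice transfers the invariance to the counts with $\DES=A$ \emph{exactly}. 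Hence the fibers of $\DES$ over $R(\mathbf m)$ and $R(\mathbf n)$ have equal size, which re-proves the lemma and confirms that the explicit $\theta$ is correctly targeted.
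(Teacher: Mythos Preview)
Your proof is correct. The paper itself does not prove this lemma at all: it simply quotes the result and cites Han's $1992$ note \cite[Section~4]{han1992} for the bijection~$\theta$. So you are supplying an argument where the paper only records a reference.

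Your reduction to adjacent transpositions and the ``freeze the descent boundary letters, complement the free part of each weakly increasing run'' involution on two-letter words is sound: the forced letters $y$ ending each non-final run and $x$ starting each non-initial run guarantee that every boundary descent survives and no new one appears, the run lengths are clearly preserved, and the computation $(p_1,q_1;\ldots;p_{s+1},q_{s+1})\mapsto(q_1{-}1,p_1{+}1;\ldots;q_{s+1}{+}1,p_{s+1}{-}1)$ is visibly an involution swapping $\sum p_t$ with $\sum q_t$. The localization step is also fine, since any letter $a_\ell$ with $\ell\notin\{j,j{+}1\}$ compares the same way to $a_j$ and to $a_{j+1}$. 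Your auxiliary symmetric-function count (via $\prod h_{\lambda_b}$ and M\"obius inversion on the boolean lattice of descent sets) is a standard and correct independent confirmation. In short, you have produced a complete and explicit proof of a lemma that the paper leaves to a citation; whether your involution coincides with Han's original $\theta$ cannot be decided from the paper alone.
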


\begin{lemma}\label{lemma:mul1}
Let $n,k\geq1$ and 
${\bf m}=(m_1, m_2, \cdots, m_k)$ be an array of nonnegative integers.
For any permutation $\tau\in \S_k$,
	there exists a bijection 
	$\a \in \A_n^k({\bf m}) \mapsto \b \in \A_n^k({\bf m'})$ such that 
\begin{equation}
(\DEZ,  \Der)~\a = (\DEZ,  \Der)~\b,
\end{equation}
where ${\bf m'} = (m_{\tau(1)}, m_{\tau(2)}, \ldots, m_{\tau(k)})$.
\end{lemma}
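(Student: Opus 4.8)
The plan is to build the required bijection without touching the base permutation at all: keep $\pi$ fixed and only recolor its fixed points, so that the derangement part $\Der(\a)=\Pos(\df_k(\a))$ is preserved for free, and all the work goes into showing that a suitable recoloring leaves $\DES(\df_k(\a))$ unchanged while turning the color multiplicities ${\bf m}$ into ${\bf m'}$. Concretely, I would write $f_1<f_2<\cdots<f_m$ for the elements of $\Fix(\pi)$, where $m=\fix(\a)=m_1+\cdots+m_k$, and let $u=u_1u_2\cdots u_m$ with $u_j=\phi(f_j)$ be the word read off the colors in position order. This $u$ is exactly the subword of negative letters of $\df_k(\a)$; regarded as a word over the negative integers $\neg 1>\neg 2>\cdots>\neg k$ with their natural order, it contains $m_i$ copies of $\neg i$ for each $i$, and hence lies in a rearrangement class to which Lemma~\ref{lemma:mul0} applies. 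The one bookkeeping point to set up is precisely this: taking the entries of $u$ to be the negative values $\neg i=-i$ makes $\DES(u)$ (in the ordinary sense) coincide with the descents of $\df_k$ occurring between two adjacent negative letters.

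First I would apply the multiplicity-changing bijection $\theta$ of Lemma~\ref{lemma:mul0} to $u$. Since the target profile, namely $m_{\tau(i)}$ copies of $\neg i$, is a rearrangement of the profile of $u$, the lemma produces a word $u'=\theta(u)$ with that profile and with $\DES(u')=\DES(u)$ as subsets of $[m-1]$. I would then define $\b$ to be the $k$-arrangement with the same base permutation $\pi$ and the recoloring $\phi'$ obtained by reading $u'$ back into the positions $f_1<\cdots<f_m$. By construction $\fix_i(\b)=m_{\tau(i)}=m'_i$, so $\b\in\A_n^k({\bf m'})$; and since $\theta$ is a bijection while $\pi$ (hence $f_1,\ldots,f_m$) is untouched, the assignment $\a\mapsto\b$ is a bijection $\A_n^k({\bf m})\to\A_n^k({\bf m'})$. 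Because $\pi$ is unchanged, the positive letters of $\df_k$ and their positions are identical for $\a$ and $\b$, so $\Der(\b)=\Der(\a)$ immediately.

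The only remaining point, and the single delicate one, is to verify $\DEZ(\b)=\DEZ(\a)$, i.e. $\DES(\df_k(\b))=\DES(\df_k(\a))$. I would split the indices $i\in[n-1]$ according to the signs of the two neighbouring letters of $\df_k$: any pair in which at least one neighbour is positive is insensitive to the recoloring, since two positive neighbours contribute a descent governed solely by $\pi$, a positive-then-negative pair is always a descent, and a negative-then-positive pair never is, irrespective of the colors. Thus a descent can be created or destroyed only between two \emph{adjacent} negative letters, that is between two consecutive fixed points $f_j=f_{j+1}-1$, and such a pair sits at index $j$ of $u$, where a descent occurs exactly when $u_j>u_{j+1}$. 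The crucial observation is that Lemma~\ref{lemma:mul0} preserves the \emph{entire} descent set of $u$, not merely its cardinality, so descent-ness is preserved at \emph{every} index $j\in[m-1]$, in particular at those arising from adjacent fixed points. I expect the obstacle a reader will anticipate, namely coordinating the color changes across the several maximal blocks of negative letters separated by positive ones, to evaporate precisely because $\theta$ respects the positionwise descent set: whatever $\theta$ does at the block-boundary indices of $u$ is irrelevant to $\df_k$, while at the block-internal indices it keeps every descent in place. Hence every descent of $\df_k$ is preserved, $\DEZ(\b)=\DEZ(\a)$, and the proof is complete.
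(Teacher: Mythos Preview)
Your proposal is correct and follows essentially the same approach as the paper's proof: hide the positive letters of $\df_k(\a)$, apply the multiplicity-changing bijection $\theta$ of Lemma~\ref{lemma:mul0} to the remaining word of negative letters, then restore the positive letters, and verify $\DES(\df_k(\a))=\DES(\df_k(\b))$ by a case analysis on the signs of adjacent letters. Your write-up is in fact more explicit than the paper's on one point---you spell out why the behaviour of $\theta$ at the block-boundary indices of $u$ (those $j$ with $f_{j+1}>f_j+1$) is irrelevant to $\DES(\df_k)$, whereas the paper's three-case check leaves this implicit.
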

\begin{proof}
	(Step 1)	In the derangement form $\df_k(\a)$ of $\a$, hide the positive letters; (Step 2) Then, apply the appropriate $\theta$ in Lemma \ref{lemma:mul0}; (Step 3) Show the letters hidden in (Step 1). We get the derangement form 
	$\df_k(\b)$ of $\b$. 
	There are three types of descent values in $\df_k(\a)$ and $\df_k(\b)$ : positive letter to positive letter, negative letter to negative letter, positive letter to negative letter. Checking each type of descent values, we conclude that $\DES(\df_k(\a))=\DES(\df_k(\b))$.
\end{proof}
\begin{lemma}\label{lemma:mul2}
Let $n,k\geq1$ and 
${\bf m}=(m_1, m_2, \cdots, m_k)$ be an array of nonnegative integers.
	For any permutation $\tau\in \S_{k-1}$,
	there exists a bijection $\a\in \A_n^k({\bf m}) \mapsto \b\in \A_n^k({\bf m'})$ such that 
\begin{equation}
(\DES, \DEZ, \Der)~\a = (\DES, \DEZ, \Der)~\b.
\end{equation}
	where ${\bf m'} = (m_{\tau(1)}, m_{\tau(2)}, \ldots, m_{\tau(k-1)}, m_k)$.
\end{lemma}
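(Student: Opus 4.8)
The plan is to run the construction of Lemma~\ref{lemma:mul1} on a single subword that is simultaneously visible in both representations of $\a$. The key observation is that the intermediate negative letters $\neg1,\ldots,\neg{(k-1)}$ occupy the same positions with the same values in $\df_k(\a)$ and in $\pf_k(\a)$: a fixed point of color $j\in\{1,\ldots,k-1\}$ is replaced by $\phi(i)=\neg j$ in both forms (in the permutation form precisely because $\phi(i)\neq\neg k$), and the positive reduction rescales only the positive letters. Writing $v$ for this common subword and $S$ for the class of letters $\neg1,\ldots,\neg{(k-1)}$, we may regard $v$ as a word on the chain $\neg{(k-1)}<\cdots<\neg1$ whose content records the multiplicities $m_1,\ldots,m_{k-1}$.

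First I would freeze every positive letter and every $\neg k$ of $\df_k(\a)$, extract $v$, and apply the appropriate descent-preserving bijection $\theta$ of Lemma~\ref{lemma:mul0} to produce a word $\theta(v)$ of the same length whose content realizes the rearrangement prescribed by $\tau$. Re-inserting $\theta(v)$ at the frozen positions defines $\b=(\pi,\phi')\in\A_n^k({\bf m'})$: the base permutation $\pi$ and all color-$k$ fixed points are retained, and only the colors among the remaining fixed points are reshuffled. Since $\pi$ and the color-$k$ fixed points are untouched, $\pf_k(\b)$ arises from $\pf_k(\a)$ by the very same substitution $v\mapsto\theta(v)$ at the very same positions; hence the single map $\a\mapsto\b$ governs both forms at once, and it is a bijection because $\theta$ is.

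It remains to verify $(\DES,\DEZ,\Der)$ positionwise, exactly as in Lemma~\ref{lemma:mul1}. Since no positive letter is moved or relabelled, $\Der(\b)=\Pos(\df_k(\b))=\Pos(\df_k(\a))=\Der(\a)$ at once. For $\DEZ$, order the three letter types of $\df_k$ by $\neg k<S<P$; every adjacency other than $S$--$S$ keeps its descent status, since it pairs two frozen letters or a frozen letter with an $S$-letter and every $S$-value lies strictly between $\neg k$ and every positive letter, while every $S$--$S$ adjacency keeps its status because $\theta$ preserves the descent set of $v$. For $\DES$ the identical analysis applies to $\pf_k$, whose alphabet reduces to $P$ (now absorbing the color-$k$ fixed points) and $S$. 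The one delicate point, and the crux of the argument, is that we must preserve the descent \emph{set} of $v$ rather than merely its number of descents: two $S$-letters are adjacent in a full form exactly when they are adjacent in $v$, so the positionwise invariance of descents under $\theta$ transfers precisely to the $S$--$S$ adjacencies of both $\df_k$ and $\pf_k$. This is exactly the property furnished by Lemma~\ref{lemma:mul0}.
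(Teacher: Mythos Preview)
Your proof is correct and follows essentially the same approach as the paper's: freeze the positive letters and the $\neg k$'s, apply the $\DES$-preserving bijection $\theta$ of Lemma~\ref{lemma:mul0} to the remaining subword on $\neg{(k-1)}<\cdots<\neg1$, and reinsert. The paper's proof is a two-line reference back to Lemma~\ref{lemma:mul1}, noting only that ``all letters $\neg k$ are not changed in (Step~2)''; you have made explicit the observation that the intermediate negative letters occupy identical positions with identical values in $\df_k(\a)$ and $\pf_k(\a)$, which is exactly why a single application of $\theta$ governs both forms. One small wording issue: your phrase ``adjacent in a full form exactly when they are adjacent in $v$'' overstates things---the implication you actually use (and all you need) is that an $S$--$S$ adjacency in the full word corresponds to a specific adjacency in $v$, whose descent status $\theta$ preserves.
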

\begin{proof}
	Similar to the proof of Lemma \ref{lemma:mul1}, we construct $\b$ and verify that $\DES(\df_k(\a))=\DES(\df_k(\b))$. 
	Since all letters $\neg k$ are not changed in (Step 2), we also have $\DES(\pf_k(\a))=\DES(\pf_k(\b))$.
\end{proof}

\begin{proof}[Proof of Theorem \ref{euler:k-set}]
By Lemma \ref{lemma:mul1}, it suffices to prove the theorem for a special permutation $\tau\in\S_k$.
We then prove the theorem for $\tau=\tau(1)\tau(2)\cdots\tau(k-1)\tau(k)=2 3 \cdots k 1\in \S_k$.
The bijection $\a \in \A_n^k({\bf m}) \mapsto \Phi_k(\a) \in \A_n^k({\bf m'})$ is constructed  in the following way.

Step 1. Derive the derangement form $S_1 = \df_k(\a)\in \D_n^k$ of $\a$;

Step 2. From $S_1$, replace each $-j$ by $-j+1$;  hide all negative letter; 

Step 3. Apply the bijection $\Phi$ described in \cite[Section 2]{fh1} and obtain $S_3=\Phi(S_2)$;

Step 4. From $S_3$, show the letters hidden in Step 2; replace $0$ by $\neg k$.
We get $S_4 \in \D_n^k$;

Step 5. Finally let $\Phi_k(\a)=\df_k^{-1}(S_4)$. For convenience, we write $S_5=\pf_k(\Phi_k(\a))\in \P_n^k$.

\noindent
The following example illustrates our construction by showing the result of each step.
\medskip
$$
\begin{array}{c c c  c c c c c c c c c c c c c c c c c c c c c}
	&S_1 &=&5&\bar 1&1&\bar 2&2 &\bar3    &\bar 1&\bar 3&\bar 1&3&6&\bar 2&\bar 1&7&4 & \in \D_n^k\\
	&S_2 &=&5&0&1&&2   & &0&&0&3&6&&0&7&4 &\\
	&S_3 &=&5&1&0&&0   & &2&&3&0&6&&0&7&4 & \\
	&S_4 &=&5&1&\bar 3&\bar 1&\bar 3   & \bar2&2&\bar 2&3&\bar 3&6&\bar 1&\bar 3&7&4 & \in \D_n^k\\
	&	S_5 &=&8&1& 3&\bar 1 &4   & \bar 2 &2&\bar2 &5&7&10&\bar1&9&11&6 & \in \P_n^k\\
\end{array}
$$
With the bijection $\Phi_k$ constructed above, we verify easily 
\begin{equation*}
	\Der~(\a) = \Der~(\Phi_k(\a)),
\end{equation*}
and
	\begin{equation}\label{eq:mulchange}
	(\fix_1, \fix_2, \fix_3, \ldots, \fix_k)~\a 
	= (\fix_k,\fix_1,\fix_2, \ldots, \fix_{k-1} )~\Phi_k(\a).
\end{equation}
Moreover, by the construction and properties of $\Phi$ (see \cite[Thm.~1.1]{fh1}), we have
\begin{equation*}
	\DEZ(\a) = \DES(S_1)=  \DES(S_5)= \DES(\Phi_k(\a)).
\end{equation*}
In the above example, 
$	\DEZ(\a) = \{1,3,5,7,11,14\}= \DES(\Phi_k(\a))$.
This proves Theorem~\ref{euler:k-set}.
\end{proof}

We need some definitions to facilitate our construction of $\Psi_k$. 
Let us denote by 
$\Der_k(\a)$
the word obtained from $\df_k(\a)$ by removing all letters $\neg k$, called the  {\em weak derangement part} of $\a$, which is extremely  important in our construction. Note that $\Der_k(\a)$ can be viewed as the derangement form of certain $k$-arrangement itself, hence $(\pi,\phi)=\df_k^{-1}(\Der_k(\a))$ is well-defined with $\phi^{-1}(\neg k)=\emptyset$. For each permutation $\sigma=\sigma(1)\cdots\sigma(n)\in\S_n$, we define its {\em excedance word} $\ee(\sigma)$ to be the word made from two letters $E$ and $N$, standing for {\em excedance} and {\em nonexcedance}, respectively. More precisely, we let
$$\ee(\sigma)=e_1e_2\cdots e_n, \quad \text{where } e_i:=\begin{cases}
E & \text{if $\sigma(i)>i$,}\\
N & \text{if $\sigma(i)\le i$,}
\end{cases}\quad \text{for }1\le i\le n.$$

The excedance word for $\Der_k(\a)$, is understood to be the excedance word for the base permutation of $\df_k^{-1}(\Der_k(\a))$. A moment of reflection should reveal the following observation, which shows that inserting letters $\neg k$ back into $\Der_k(\a)$ does not change the excedance type of those letters contained in $\Der_k(\a)$.
\begin{Observation}\label{exc word}
For all $n,k\ge 1$ and each $\a=(\pi,\phi)\in\A_n^k$, suppose $\Der_k(\a):=w=w_1\cdots w_m\in\Z^m$ for some $m\le n$, and $w_j$ is reduced from $\pi(i_j)$ for every $1\le j\le m$. If $\ee(\pi)=e_1e_2\cdots e_n$ and $\ee(w)=e_1'e_2'\cdots e_m'$ are the excedance words for $\pi$ and $w$ respectively, then $e_{i_j}=e_j'$ for every $1\le j\le m$.
\end{Observation}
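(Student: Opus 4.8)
The plan is to prove Observation~\ref{exc word} by tracking how the removal of the letters $\neg k$ from $\df_k(\a)$ and the subsequent positive reduction affect the excedance/nonexcedance type of each surviving position. Recall that $\Der_k(\a)$ is obtained from $\df_k(\a)$ by deleting all occurrences of the letter $\neg k$, and that $\df_k^{-1}(\Der_k(\a))=(\pi',\phi')$ is the base permutation whose excedance word is, by definition, $\ee(w)$. The content I want to establish is that the positions $i_j$ that survive in $\pi$ keep their excedance type when reindexed as positions $j$ in $w=\Der_k(\a)$. Because the only letters removed are the fixed-point letters colored $\neg k$ (which, under the positive reduction in $\df_k$, are precisely the fixed points of the base permutation that get assigned the most negative color), deleting them removes certain indices and shifts the remaining ones; I must show this shift is compatible between the position index and the value at that position.

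First I would set up notation carefully: write $\df_k(\a)=u=u_1u_2\cdots u_n$, and let $D\subseteq[n]$ be the set of positions carrying the letter $\neg k$, so that $w$ is the positive reduction of the subword of $u$ indexed by $[n]\setminus D$. The key elementary fact is that every position in $D$ is a fixed point of the base permutation $\pi$ (this is immediate from the definition of the derangement form, since negative letters arise exactly from fixed points), hence $\sigma(i)=i$ there. I would then argue that for a surviving position $i_j$ (the $j$-th element of $[n]\setminus D$ in increasing order), the number of positions in $D$ that lie to its left equals the number of \emph{values} removed that are smaller than $\pi(i_j)$; this is the crucial balancing observation and it holds precisely because each removed position is a fixed point, so removing it decrements the position index and the value index in lockstep.

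The main obstacle, and the step I expect to require the most care, is this lockstep claim: I must verify that for each surviving index, the leftward shift of its position equals the downward shift of its value under positive reduction, so that the inequality defining an excedance ($\sigma(i)>i$ versus the reduced $w_j$ compared to $j$) is preserved. Concretely, if $r=|\{d\in D: d<i_j\}|$ counts removed positions to the left, then $i_j$ becomes position $j=i_j-r$ in $w$; I need the reduced value at that position to be $\pi(i_j)-r$ as well. Since every $d\in D$ satisfies $\pi(d)=d<i_j$ exactly when $d<i_j$, the removed values below $\pi(i_j)$ are in bijection with removed positions below $i_j$, giving the same count $r$. Therefore $w_j>j \iff \pi(i_j)-r>i_j-r \iff \pi(i_j)>i_j$, which is exactly $e_{i_j}=e_j'$.

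Finally I would assemble these pieces: the positive reduction only relabels positive letters order-isomorphically and does not disturb strict inequalities among the positive values, so after reduction the excedance type at each surviving position is determined by comparing $w_j$ with $j$ in the same way as comparing $\pi(i_j)$ with $i_j$. Combining the position-shift count and the value-shift count, which coincide by the fixed-point property of $D$, yields $e_{i_j}=e_j'$ for all $1\le j\le m$, completing the proof. I would note that the negative letters removed being \emph{fixed points} is doing all the work; if the removed letters were at non-fixed positions the two counts could differ and the conclusion would fail, which is why the observation is specific to the $\neg k$-colored letters.
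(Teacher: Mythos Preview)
Your overall plan is right, but the ``lockstep'' claim that the position shift and the value shift coincide is false, and it is the heart of your argument. You assert that for a surviving index $i_j$ the count $r=|\{d\in D:d<i_j\}|$ equals the number of removed values below $\pi(i_j)$, i.e.\ $|\{d\in D:d<\pi(i_j)\}|$. But these two counts differ in general: take the paper's own example $\pi=7534162$ with $D=\{4,6\}$; for $i_1=1$ one has $r=0$ removed positions to the left, yet both removed fixed points $4,6$ are below $\pi(1)=7$, so the value shift is $2$. Your sentence ``$\pi(d)=d<i_j$ exactly when $d<i_j$'' is a tautology and does not yield the comparison with $\pi(i_j)$ you need in the next clause.

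What actually makes the observation true is an inequality, not an equality. Writing $a=|\{d\in D:d<i_j\}|$ and $b=|\{d\in D:d<\pi(i_j)\}|$, one has $j=i_j-a$ and $\pi'(j)=\pi(i_j)-b$, so $\pi'(j)-j=(\pi(i_j)-i_j)-(b-a)$. Now $b-a$ counts the elements of $D$ lying strictly between $i_j$ and $\pi(i_j)$ (neither endpoint can belong to $D$: $i_j\notin D$ by construction, and $\pi(i_j)\in D$ would force $i_j=\pi(i_j)\in D$ since $\pi$ is injective and $\pi$ fixes $D$). Hence $|b-a|\le|\pi(i_j)-i_j|-1$ whenever $\pi(i_j)\ne i_j$, so the sign of $\pi'(j)-j$ agrees with that of $\pi(i_j)-i_j$; and when $\pi(i_j)=i_j$ one has $a=b$ trivially. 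This is the missing step; once you replace the equality claim by this bound, your argument goes through. (The paper itself leaves the observation unproved, so you are supplying content it omits.)
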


For the $3$-arrangement $\a=(\pi,\phi)$ given in the introduction, namely, $\pi=75{\bf3}{\bf4}1{\bf6}2$ and $\phi(3)=\neg 1$, $\phi(4)=\neg 3$ and $\phi(6)=\neg 3$, we see $\Der_3(\a)=43\neg112$. Therefore $\ee(\pi)=EENNNNN$ and $\ee(\Der_3(\a))=EENNN$, which agrees with the observation above.

Suppose $\a=(\pi,\phi)\in\A_n^k$ with $\fix_k(\a)=n-m$, $\Der_k(\a):=w=w_1 \cdots w_{m}\in\Z^m$, and $\ee(w)=e_1\cdots e_m$ for some $1\le m\le n$. Now $\df_k(\a)$ can be decomposed as 
$$\df_k(\a)=S_0\:w_1\:S_1\:w_2\:\cdots\:S_{m-1}\:w_m\:S_m,$$
where $S_i$, $0\le i\le m$ is a (possibly empty) block of letters $\neg k$, referred to as the {\em $i$-th slot} of $\df_k(\a)$. Define the {\em slot length vector} of $\a$ as $\s(\a):=(s_0,s_1,\ldots,s_{m})$, where $s_i=|S_i|$ for $0\le i\le m$. Note that $n-m=\sum_{i=0}^{m}s_i$ and the pair $(\s(\a),\Der_k(\a))$ uniquely determines $\a$ and vice versa. 

Next, we set $w_0=w_{m+1}=+\infty$, $e_0=e_{m+1}=E$, and classify $S_i$ into the following mutually exclusive types, according to the values of $w_i$, $w_{i+1}$, and the pair $(e_i,e_{i+1})$:
\begin{itemize}
	\item type I: $w_{i} > w_{i+1}$ and $(e_i,e_{i+1})=(E,N)$;
	\item type II: $w_{i} \le w_{i+1}$ and $(e_i,e_{i+1})=(N,E)$;
	\item type III: $w_{i} \le w_{i+1}$ and $(e_i,e_{i+1})\not=(N,E)$;
	\item type IV: $w_{i} > w_{i+1}$ and $(e_i,e_{i+1})\not=(E,N)$.
\end{itemize}

The four types above clearly cover all the possibilities for the slot $S_i$, and by Observation~\ref{exc word}, the type of $S_i$ only depends on $\Der_k(w)$ and has nothing to do with $s_i$. We use $t_1(\a)$ (resp.~$t_2(\a)$, $t_3(\a)$ and $t_4(\a)$) to denote the number of slots (possibly empty) of type I (resp.~type II, type III and type IV) in $\df_k(\a)$, while the numbers of the non-empty ones are denoted as $t_1^{+}(\a)$, $t_2^{+}(\a)$, $t_3^{+}(\a)$ and $t_4^{+}(\a)$ respectively. We can easily verify that
\begin{align*}
&t_1(\a)+t_4(\a) = \des(\Der_k(\a))+1,\\
&t_1(\a)+t_2(\a)+t_3(\a)+t_4(\a) = m+1, \text{ and }\\
&t_1^+(\a)+t_2^+(\a)+t_3^+(\a)+t_4^+(\a) \le n-m.
\end{align*}

The following lemma is the key to motivate our definition of $\Psi_k$.
\begin{lemma}\label{block type}
For every $\a\in\A_n^k$ with a non-empty $\Der_k(\a)$, we have
\begin{itemize}
	\item[1)] $t_1(\a)=t_2(\a)$ and slots of type I and type II appear alternatingly in $\df_k(\a)$, starting with a block of type I. 
	\item[2)] the following relationships hold
	\begin{align}
	\label{des-change}\des(\a) &=\des(\Der_k(\a))+t_1^{+}(\a)+t_3^{+}(\a),\\
	\label{dez-change}\dez(\a) &=\des(\Der_k(\a))+t_2^{+}(\a)+t_3^{+}(\a).
	\end{align}
\end{itemize}
\end{lemma}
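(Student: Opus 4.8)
\textbf{Proof plan for Lemma \ref{block type}.}

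The plan is to analyze how inserting blocks of $\neg k$ letters into the slots of $\Der_k(\a)$ affects the descent count of both the derangement form $\df_k(\a)$ and the permutation form $\pf_k(\a)$. The crucial feature is that $\neg k$ is strictly smaller than every positive letter (it is the most negative color), so the descents it creates or destroys are entirely determined by the surrounding letters $w_i,w_{i+1}$ and the type of the slot $S_i$.

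For part 1), I would first establish the alternation claim by tracking the excedance word $\ee(w)$ of $w=\Der_k(\a)$. By Observation \ref{exc word}, each letter $w_j$ keeps the excedance/nonexcedance status it had in the base permutation $\pi$. The key structural fact is that, reading $\ee(w)=e_1\cdots e_m$ (with the sentinels $e_0=e_{m+1}=E$), each maximal run of consecutive $E$'s must be immediately followed by a transition to $N$, and each maximal run of $N$'s by a transition back to $E$; this forces transitions $(E,N)$ and $(N,E)$ to alternate. A slot $S_i$ of type I sits exactly at a position where $(e_i,e_{i+1})=(E,N)$ together with a descent $w_i>w_{i+1}$, while type II sits where $(e_i,e_{i+1})=(N,E)$ together with a weak ascent $w_i\le w_{i+1}$. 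Here I would invoke the standard fact relating excedances and descent-like patterns of the two-letter excedance profile (used implicitly in the Foata--Han theory cited in \cite{fh1}): an $E\to N$ transition in the excedance word is always accompanied by a strict descent in value, and an $N\to E$ transition by a weak ascent, so that the constraints in the definitions of type I and type II are automatically satisfied whenever the excedance transition occurs. Consequently the type I slots are in bijection with the $(E,N)$-transitions and the type II slots with the $(N,E)$-transitions; since these transitions alternate and the sentinels force the first boundary transition (from $e_0=E$) to be of type $(E,N)$, we get $t_1(\a)=t_2(\a)$ with the types alternating and beginning with type I.

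For part 2), I would compute $\des(\a)=\des(\pf_k(\a))$ and $\dez(\a)=\des(\df_k(\a))$ by a bookkeeping argument over the slots, starting from $\des(\Der_k(\a))$ and adding the net change contributed by each non-empty slot. Inserting a nonempty block $S_i$ of $\neg k$'s between $w_i$ and $w_{i+1}$ has two potential effects: it may create an internal descent at the boundary $w_i\,\neg k$ (a descent precisely when $w_i>\neg k$, i.e. whenever $w_i$ is positive) or at $\neg k\,w_{i+1}$, and it replaces the original adjacency $w_i w_{i+1}$ by $w_i\cdots\neg k\cdots w_{i+1}$, thereby possibly destroying an existing descent $w_i>w_{i+1}$. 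In the derangement form, the letters $\neg k$ are present, so I would check case by case (over the four types) that a nonempty slot contributes $+1$ to $\dez$ exactly when it is of type II or type III and contributes $0$ when it is of type I or IV, which yields \eqref{dez-change}; the sentinel conventions $w_0=w_{m+1}=+\infty$ handle the leading and trailing slots uniformly. In the permutation form, the letters $\neg k$ are instead \emph{removed} (by the definition of $\pf_k$), so the relevant adjacency is exactly $w_iw_{i+1}$ again, but the excedance data govern how the missing fixed points shift the positive reduction; carrying out the same four-case analysis shows that a nonempty slot contributes $+1$ to $\des$ exactly when it is of type I or type III, giving \eqref{des-change}.

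I expect the main obstacle to be the permutation-form computation in part 2): unlike the derangement form, where the inserted $\neg k$'s are visibly present and the descent changes are local, in $\pf_k$ these letters are deleted and their former positions are absorbed into the positive reduction, so one must argue carefully—using Observation \ref{exc word} and the excedance/descent correspondence—that a fixed point colored $\neg k$ at a type I or type III boundary precisely introduces one extra descent in $\pf_k(\a)$, while a type II or type IV boundary introduces none. The sign conventions at the sentinels and the need to confirm that only the \emph{non-empty} slots contribute (so that the identities involve $t_i^+$ rather than $t_i$) are the delicate points requiring the most care.
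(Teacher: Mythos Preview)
Your approach to part 1) is correct and in fact cleaner than the paper's: you identify type~I slots with the $(E,N)$ transitions and type~II slots with the $(N,E)$ transitions in the extended excedance word $e_0e_1\cdots e_{m+1}$, so their alternation (starting with type~I) is just the alternation of $EN$ and $NE$ transitions in a word that begins and ends with $E$. The paper argues instead via three separate ``cannot precede'' observations and a boundary analysis of $S_0$ and $S_m$. Your route is shorter, though the implication $(E,N)\Rightarrow w_i>w_{i+1}$ (and dually $(N,E)\Rightarrow w_i<w_{i+1}$) is not a black-box ``standard fact'' here---it needs a two-line check splitting on whether $w_{i+1}$ (resp.\ $w_i$) is positive or negative.

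Your plan for $\dez$ in part 2) is correct and matches the paper's argument.

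There is, however, a genuine misconception in your plan for $\des$. You write that in the permutation form ``the letters $\neg k$ are removed\ldots so the relevant adjacency is exactly $w_iw_{i+1}$ again.'' This is not how $\pf_k$ works: the $\neg k$-colored fixed points are \emph{not} deleted from $\pf_k(\a)$; they remain as positive entries, and in fact they are still fixed points of the permutation part. Thus around a nonempty slot $S_i$ the word $\pf_k(\a)$ reads $V_i\,F_1\,F_2\cdots F_{s_i}\,V_{i+1}$ with $F_1<\cdots<F_{s_i}$ an increasing run of fixed points, not $V_i\,V_{i+1}$. The correct local analysis is: there is a descent at $V_iF_1$ iff $e_i=E$, and a descent at $F_{s_i}V_{i+1}$ iff $e_{i+1}=N$ (both via Observation~\ref{exc word}, since the $F_j$ equal their positions in the permutation part). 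Subtracting the lost adjacency contribution $\chi(w_i>w_{i+1})$ then gives net change $+1$ for types~I and~III and $0$ for types~II and~IV, where the consistency for types~III and~IV uses precisely your part~1) observation that $(E,N)$ with $w_i\le w_{i+1}$ and $(N,E)$ with $w_i>w_{i+1}$ cannot occur. Your final paragraph gestures toward the fixed points, but the stated mechanism (``shift the positive reduction'') is not what drives the descent count.
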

\begin{proof}
Suppose $w=\Der_k(\a)$ as before. It is evident that 
\begin{itemize}
\item A type I slot cannot precede another type I slot unless it precedes a type II slot first. 
\item A type II slot cannot precede another type II slot unless it precedes a type I slot first.
\item A type IV slot $S_i$ with $w_i>w_{i+1}$ and $e_i=e_{i+1}=E$ cannot precede a type II slot unless it precedes a type I slot first.
\end{itemize}
Since we made the convention that $w_0=w_{m+1}=+\infty$, and that the positive letters of $w$ form a derangement word, we see $S_m$ must be of type II, and $S_0$ is either of type IV with $w_0>w_1>0$, $e_0=e_1=E$, or of type I when $w_1$ is negative. Hence by the discussion above, there exists at least one slot of type I among $S_0,S_1,\ldots,S_{m-1}$, and the claim in part 1) follows as well.

Next for part 2), when $\fix_k(\a)=0$ and $m=n$, we have $\df_k(\a)=\pf_k(\a)=\Der_k(\a)$, with all slots being empty, so $\des(\a)=\dez(\a)=\des(\Der_k(\a))$. Otherwise, we can recover $\df_k(\a)$ from $\Der_k(\a)$ by inserting $n-m$ copies of $\neg k$ into originally empty slots. Note that inserting $j$ copies of $\neg k$ into certain slot $S_i$ has the same effect on $\des$ (resp. $\dez$) for each $1\le j\le n-m$. Suppose $S_i$ is an empty slot of type I, i.e., $w_{i} > w_{i+1}$ and $(e_i,e_{i+1})=(E,N)$. Now if we insert $j$ copies of $\neg k$ into it, transfering
$$w_1\cdots w_i w_{i+1}\cdots w_m \quad \text{into} \quad
	w_1\cdots w_i\underbrace{\bar k \bar k\cdots \bar k}_{\text{$j$ copies}} w_{i+1}\cdots w_m,
$$
we see $\des$ increases by one while $\dez$ remains the same. This explains why we have the term $t_1^+(\a)$ in \eqref{des-change} but not in \eqref{dez-change}. Similar discussions of the other three types prove both \eqref{des-change} and \eqref{dez-change}.
\end{proof}

\begin{proof}[\bf Proof of Theorem~\ref{double-euler}]
By Lemma \ref{lemma:mul2}, it suffices to prove the theorem for a special permutation $\tau\in\S_{k-1}$. We then prove the theorem for the identity permutation $\tau=12\cdots (k-1)\in \S_{k-1}$. We proceed to construct a bijection $\a\in\A_n^k({\bf m})\mapsto \Psi_k(\a)\in\A_n^k({\bf m})$ such that
\begin{align}\label{eq:Psi-Der_k}
(\des,\dez,\Der_k)~\a &= (\dez,\des,\Der_k)~\Psi_k(\a),
\end{align}
which is more than we need, since $\Der(\a)=\Pos(\Der_k(\a))$.
The only $k$-arrangement with empty weak derangement part is $\a=(12\cdots n,\phi)$, where $\phi(i)=\neg k$ for all $1\le i\le n$. In this case we let $\Psi_k(\a)=\a$ and see that \eqref{eq:Psi-Der_k} holds true. Otherwise, for a given $k$-arrangement $\a\in\A_n^k$ with
$$\s(\a)=(s_0,s_1,\ldots,s_m), \; \text{ and }\; \Der_k(\a)\in\Z^m, \; m\ge 1.$$
The aforementioned map $\Psi_k$ simply swaps the slots of types I and II in $\df_k(\a)$. Namely, by Lemma~\ref{block type}, we let $t:=t_1(\a)=t_2(\a)$, and let $S_{i_1},S_{j_1},\ldots,S_{i_t},S_{j_t}=S_m$ be all of the types I and II blocks, appearing alternatingly. Then $\b:=\Psi_k(\a)$ is taken to be the unique $k$-arrangement corresponding to
$$\s(\b)=(s_0',s_1',\ldots,s_m'),\; \text{ and }\; \Der_k(\b)=\Der_k(\a),$$
where
$$s_l':=\begin{cases}
s_{j_r} & \text{if $l=i_r$, for certain $1\le r\le t$},\\
s_{i_r} & \text{if $l=j_r$, for certain $1\le r\le t$},\\
s_l & \text{otherwise.}
\end{cases}$$
The swapping map $\Psi_k$ defined in this way preserves the sum $\sum s_l=\sum s_l'=m_k$, hence it is an involution on $\A_n^k({\bf m})$, and $$(\des,\dez)~\a = (\dez,\des)~\mathfrak{b}$$ follows from equations \eqref{des-change} and \eqref{dez-change} immediately. The proof is now completed.
\end{proof}

\begin{example}[An example of $\Psi_1$]
For $k=1$, $\P_n^1=\S_n$. Let 
$$\pi={\bf 1~2}~5~3~9~{\bf6}~4~{\bf 8}~16~11~7~{\bf 12~13}~10~{\bf15}~14~{\bf 17}\in\S_{17}$$ and $\a:=\pf_1^{-1}(\pi)$, then we see
$$\s(\a)=(2,0,0,1,1,0,0,2,1,1)\quad \text{and} \quad \Der(\a)=3~1~5~2~9~7~4~6~8. $$
The type I (resp. type II) slots are $S_1, S_3, S_6$ (resp. $S_2, S_4, S_9$). So we have
$$\s(\Psi(\a))=(2,0,0,1,1,0,1,2,1,0)\quad \text{and} \quad\Der(\Psi(\a))=3~1~5~2~9~7~4~6~8, $$
which gives us $\df_1(\Psi(\a))=\neg 1~\neg 1~3~1~5~\neg 1~2~\neg 1~9~7~\neg 1~4~\neg 1~\neg 1~6~\neg 1~8$. One can verify that indeed
$$(\des, \dez, \Der)~\a = (7, 8, 315297468) = (\dez, \des, \Der)~\Psi(\a).$$
\end{example}

\begin{example}[An example of $\Psi_2$]
For $k=2$, let $\b=(\sigma,\phi)$ with $\sigma=1~2~9~3~5~6~4~8~7$ and $\phi(1)=\phi(8)=\neg 1,\phi(2)=\phi(5)=\phi(6)=\neg 2$, then we see
$$\s(\b)=(0,1,0,2,0,0,0)\quad\text{and}\quad \Der_2(\b)=\neg 1~4~1~2~\neg 1~3.$$
The type I (resp. type II) slots are $S_0,S_2$ (resp. $S_1,S_6$). So we have
$$\s(\Psi(\b))=(1,0,0,2,0,0,0)\quad\text{and}\quad \Der_2(\Psi(\b))=\neg 1~4~1~2~\neg 1~3,$$
which gives us $\df_2(\Psi(\b))=\neg 2~\neg 1~4~1~\neg 2~\neg 2~2~\neg 1~3$. One checks to see
$$(\des, \dez, \Der_2)~\b = (3, 4, \neg 1~4~1~2~\neg 1~3) = (\dez, \des, \Der_2)~\Psi(\b).$$
\end{example}


\section{Bivariate joint generating function for \texorpdfstring{$\des$ and $\dez$}{des and dez}} 
\label{sec:4}
In section \ref{sec:desdez:sym}, we have established that the distribution of the  two Eulerian statistics $\des$ and $\dez$ are symmetric over the permutation group. This section is devoted to the derivation of the bivariate joint generating function for those two statistics. Note that we will abuse the notation to write $\dez(\pi):=\dez(\pf_1^{-1}(\pi))$, for any permutation $\pi$. For instance, $\dez(41352)=\des(41\neg 152)=3$.

A descent (position) $i$ of a permutation $\pi$ is called a {\em crossing descent}, if $\pi_i\ge i+1\ge \pi_{i+1}$. Denote by $\xdes(\pi)$ the number of crossing descents of $\pi$. When restricted to the set of derangements, $\xdes$ is exactly the statistic $t_1$ that we introduce in section~\ref{sec:desdez:sym}. Let
\begin{align}
	\label{def:F}F(t,s; u)&:=1+\sum_{n\geq 2} \Bigl( \sum_{\pi\in \D_n} 
	 t^{\des(\pi)}  s^{\xdes(\pi)} \Bigr) u^n,\\
	\label{def:G}G(x,y; u)&:=1+\sum_{n\geq 1} \Bigl( \sum_{\pi\in{\S}_n} 
	 x^{\des(\pi)} y^{\dez(\pi)} \Bigr) u^n,
\end{align} 
be the generating functions of $(\des, \xdes)$ over the derangement set and $(\des, \dez)$ over the permutation set respectively. The initial values of $F(t,s;u)$ and $G(x,y;u)$ are given below:
\begin{align*}
	F(t,s;u)&= 1+ stu^2 + 2s t u^3 + ( st^3 +  2s^2t^2 + 2st^2 + 4st) u^4 + \cdots , \\
	G(x,y;u)&= 1+ u +    (xy+1)u^2   + (x^2y + xy^2 + 3xy + 1)u^3   \\
	& \qquad + (x^3y^3 + 7x^2y^2 + 4x^2y + 4xy^2 + 7xy + 1)u^4 + \cdots.
\end{align*}
\begin{theorem}\label{th:rational}
	For each nonnegative integer $m$ the coefficient of $t^m$ in $F(t,s;u)$ defined by \eqref{def:F} is a {\it rational fraction} in $s$ and $u$.
\end{theorem}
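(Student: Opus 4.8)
The plan is to use the Decrease Value Theorem to produce an explicit expansion of $F(t,s;u)$ as an infinite sum of rational functions whose $t$-adic valuations tend to infinity, and then to extract the coefficient of $t^m$ term by term. The guiding model is the classical Worpitzky-type identity for the Eulerian polynomials, $\sum_{n\ge 0}A_n(t)u^n=(1-t)\sum_{k\ge 0} t^k/\bigl(1-(k+1)(1-t)u\bigr)$, in which the full bivariate series is \emph{not} rational in $(t,u)$, yet each $[t^m]$-coefficient is a rational function of $u$ because only finitely many summands contribute.

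First I would translate the pair $(\des,\xdes)$ on a derangement into the language of decrease values. Recall from Section~\ref{sec:desdez:sym} that on $\D_n$ the statistic $\xdes$ coincides with $t_1$; that is, it counts precisely the descents $i$ at which an excedance is immediately followed by a non-excedance ($\pi_i\ge i+1\ge\pi_{i+1}$). Classifying every position of $\pi$ by the pair (excedance / non-excedance, descent / non-descent) exhibits both $\des$ and $\xdes$ as sums of indicator statistics over the four position-types, which is exactly the data to which the Decrease Value Theorem assigns independent formal weights. Assigning the weight $ts$ to a crossing descent, $t$ to every other descent, and $1$ to every non-descent, while imposing the derangement constraint $\Fix(\pi)=\emptyset$, the master generating function of the Decrease Value Theorem specializes to a closed form for $F(t,s;u)$.

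Next I would read off from that closed form an expansion
\begin{equation*}
F(t,s;u)=\sum_{k\ge 0} R_k(t,s;u),
\end{equation*}
where each $R_k$ is a rational function of $t,s,u$ whose expansion in powers of $t$ begins in degree at least $k$, with a denominator that is a product of finitely many factors linear in $u$ (with coefficients polynomial in $t$ and $s$), in analogy with the $R_k=(1-t)t^k/\bigl(1-(k+1)(1-t)u\bigr)$ of the classical identity. The purpose of this shape is that $[t^m]R_k=0$ once $k>m$, so that for a fixed $m$ only finitely many summands contribute. Then, for each fixed $m$, I would write $[t^m]F(t,s;u)=\sum_{k=0}^{m}[t^m]R_k(t,s;u)$; since each $R_k$ is rational in $t,s,u$ with a denominator whose constant term in $t$ is a nonzero element of $\mathbb{Q}(s,u)$, the coefficient $[t^m]R_k$ is itself a rational fraction in $s$ and $u$, and a finite sum of such fractions is again a rational fraction. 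This yields the theorem.

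The main obstacle I anticipate is the specialization step: carrying out the Decrease Value Theorem so that it simultaneously (i) respects the derangement constraint $\Fix(\pi)=\emptyset$ and (ii) separates crossing descents from ordinary descents according to the excedance type, and then verifying that the resulting closed form genuinely possesses the valuation-growth structure claimed above (i.e.\ that the summand carrying $t^k$ has denominator nonvanishing at $t=0$, so its valuation is exactly $k$). Once that explicit form is secured, the coefficient extraction and the rationality conclusion are routine; the same computation should simultaneously furnish the closed form for $G(x,y;u)$ after reinserting the fixed points.
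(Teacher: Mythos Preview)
Your overall strategy---use the Decrease Value Theorem to write $F(t,s;u)$ as an infinite sum whose $t$-adic valuations grow, then extract $[t^m]$ as a finite sum of rational functions---is exactly the paper's, and the target expansion is the paper's $F(t,s,r;u)=\sum_{m\ge0}t^m\rho(S_m(u))$ with $S_m(u)$ explicit and involving only finitely many auxiliary variables.

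There is, however, a genuine gap at the specialization step, and it is more serious than your ``main obstacle'' paragraph suggests. You write that classifying positions of $\pi$ by the pair (excedance/non-excedance, descent/non-descent) ``is exactly the data to which the Decrease Value Theorem assigns independent formal weights''. This is not correct. The Decrease Value Theorem weights positions in a \emph{word} $w$ by their local descent/ascent/decrease/increase/record status, and the transfer to the permutation $\sigma$ goes through the Gessel--Reutenauer standardization $w\mapsto(\sigma,c)$. Under that transfer, decrease positions of $w$ correspond to excedances of $\sigma$ and increase-records to fixed points---but the \emph{descent set} of $\sigma$ is not a position-type of $w$ at all; it only emerges after summing over the companion word $c$ and passing to the graded form $\sum_m t^m S_m(u)$. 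Crossing descents are still more delicate: $\xdes$ depends on two consecutive positions of $\sigma$, and there is no way to isolate it by a single weight assignment on position types of $w$. So the sentence ``Assigning the weight $ts$ to a crossing descent, $t$ to every other descent'' cannot be carried out as written.

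The paper's resolution is to introduce a linear operator $\rho$ on $K[[X,Y,Z]]$ that sends each monomial $M$ to $s^{\eff(M)}$, where an index is \emph{effective} in $M$ if some $Y_i$ or $Z_i$ occurs together with some $X_k$, $k>i$, with no intervening $Y_j,Z_j$. The key combinatorial step (Lemma~\ref{lem:xdes-eff}) is that, under Gessel--Reutenauer, the effective indices of the specialized weight $\gamma\psi(w)$ are in bijection with the crossing descents of $\sigma$. Only with this lemma in hand can one apply $\rho$ to the graded Decrease Value identity and obtain $F(t,s,r;u)=\sum_{m\ge0}t^m\rho(S_m(u))$; each $\rho(S_m(u))$ is then seen to be rational in $s,r,u$ via the reduction $\rho(1/(a+bx))=\rho\bigl((1/a)(1-bx/(a+b))\bigr)$, since only finitely many variables $X_j,Y_j,Z_j$ occur in $S_m$. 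After that, your extraction argument goes through verbatim.
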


Theorem \ref{th:rational} is a consequence of Theorem \ref{gfF:frac} 
in view of \eqref{eq:no:deno}.

\begin{theorem}\label{th:gf:desdez}
	We have
	\begin{equation}\label{eq:gf:desdez}
G(x,y;u)= \frac{1}{1-u} \times F\Bigl(
		\frac{xy}{1-u+xyu}, 
		\frac{(1-u+xu) (1-u+yu)}{1-u+xyu};
		u(1+\frac{xyu}{1-u})\Bigr).
	\end{equation}
\end{theorem}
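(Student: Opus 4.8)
The plan is to derive the identity \eqref{eq:gf:desdez} by a combinatorial decomposition of permutations according to their weak derangement structure, translating the relationship between $\S_n$ and derangements into a substitution on the generating function $F$. The underlying principle is exactly the slot analysis of Section~\ref{sec:desdez:sym}: every permutation $\pi\in\S_n$ arises uniquely from its derangement part $\Der(\a)$ (where $\a=\pf_1^{-1}(\pi)$) by inserting fixed points, which in the $1$-arrangement picture are the letters $\neg1$. First I would recall from Lemma~\ref{block type} that, writing $d=\des(\Der_k(\a))$, the statistics satisfy $\des(\a)=d+t_1^+(\a)+t_3^+(\a)$ and $\dez(\a)=d+t_2^+(\a)+t_3^+(\a)$, and that over derangements the type~I count $t_1$ is precisely $\xdes$, with $t_1=t_2$ by part~1) of that lemma. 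Thus a derangement $w$ of length $m$ with $\des(w)=d$ and $\xdes(w)=t_1=t_2=s$-exponent contributes a whole family of permutations, indexed by how many nonempty slots of each type we create when distributing the fixed points.

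The key computational step is to perform, for a fixed derangement $w$, the sum over all ways of inserting the fixed points into the slots, tracking the effect on $(\des,\dez)$. There are $t_1+1$ slots of type~I (including the conventional outer slots), $t_2$ of type~II, and the remaining $t_3+t_4$ slots of types~III and~IV; inserting at least one $\neg1$ into a type~I (resp.\ type~II, resp.\ type~III) slot raises $\des$ only (resp.\ $\dez$ only, resp.\ both), while a type~IV insertion raises neither. For a slot that may receive zero or more fixed points, the generating function in the insertion-size variable is a geometric-type series; since inserting any positive number of copies has the same effect on $(\des,\dez)$, each slot contributes a factor of the shape $1 + (\text{weight})\cdot\frac{u}{1-u}$ summed over the fixed points it absorbs. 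Collecting these factors over all four slot types, using $t_1+t_2+t_3+t_4=m+1$ and $t_1+t_4=d+1$ from the displayed relations preceding Lemma~\ref{block type}, should produce a closed product in which each original monomial $t^d s^{t_1}u^m$ of $F$ gets multiplied by powers of the four per-slot factors and an overall $\frac{1}{1-u}$ coming from the block of pure fixed-point slots. Matching the resulting bookkeeping to the claimed arguments
$$\frac{xy}{1-u+xyu},\qquad \frac{(1-u+xu)(1-u+yu)}{1-u+xyu},\qquad u\Bigl(1+\frac{xyu}{1-u}\Bigr)$$
is then a matter of algebraic simplification: the new $t$-variable should record the net contribution per descent of $w$, the new $s$-variable the contribution of each type~I/type~II pair, and the substituted $u$ the rescaled length variable that incorporates the geometric factor for fixed-point insertions.

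The main obstacle I anticipate is getting the per-slot factorization exactly right, in particular correctly apportioning the weights among the four slot types so that the product collapses into the two clean rational expressions that become the new $t$ and $s$ arguments, rather than a messier expression. The subtlety is that type~I and type~II slots are constrained (they alternate and are equinumerous by Lemma~\ref{block type}), so they cannot be treated as freely independent; one must verify that the alternation forces them to pair up and contribute the symmetric factor $(1-u+xu)(1-u+yu)$, while the single unpaired structural slot and the descent-count identity $t_1+t_4=d+1$ supply the $xy$-weighting attached to each descent. A careful but routine check of the four insertion rules, combined with the substitution $u\mapsto u(1+\tfrac{xyu}{1-u})$ to absorb the fixed-point geometric series, should then reduce \eqref{eq:gf:desdez} to an identity between the two displayed generating-function expansions, which one can finally confirm against the given initial values of $F$ and $G$.
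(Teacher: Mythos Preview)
Your approach is essentially the same as the paper's: decompose each $1$-arrangement into its derangement part together with fixed-point insertions into the four slot types of Section~\ref{sec:desdez:sym}, use~\eqref{des-change}--\eqref{dez-change} to weight each insertion by the appropriate power of $x$, $y$, or $xy$, and then use $t_1=t_2$ together with $t_1+t_4-1=\des$ and $\xdes=t_1$ on derangements to rewrite the result in terms of $F$. The paper streamlines this by introducing an explicit intermediate generating function $F'(x,a,b,y;u)$ recording $(t_1,t_2,t_3,t_4-1)$ over derangements, so that one substitution converts $F'$ into $F$ and a second (the per-slot geometric factors you describe) converts $F'$ into $G$; composing the two yields~\eqref{eq:gf:desdez}. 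One small slip: there are $t_1$ slots of type~I, not $t_1+1$; for a genuine derangement the leftmost slot $S_0$ is always of type~IV (since $\pi(1)>1$ forces $e_1=E$), and it is this distinguished type~IV slot that supplies the external factor $\tfrac{1}{1-u}$.
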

\begin{remark} Depending on $F(t,s;u)$, Theorem \ref{th:gf:desdez} is not explicit. However, we can still see that $G(x,y;u)$ is a symmetric function in $x$ and $y$ from \eqref{eq:gf:desdez}. 
\end{remark}

\begin{proof}
By the definition of $G(x,y;u)$ in~\eqref{def:G}, we have 
\begin{equation}\label{def2:G}
G(x,y; u)=1+\sum_{n\geq 1} \Bigl( \sum_{\a\in\A^1_n} 
	 x^{\des(\a)} y^{\dez(\a)} \Bigr) u^n=\frac{1}{1-u}+\sum_{n\geq 2} \Bigl( \sum_{\a\in\tilde{\A}^1_n} 
	 x^{\des(\a)} y^{\dez(\a)} \Bigr) u^n,
\end{equation}
where $\tilde{\A}^1_n:=\{\a\in\A^1_n: \Der(\a)\neq\emptyset\}$. 
Recall from the last section that any $1$-arrangement $\a\in\tilde{\A}^1_n$ with weak derangement part $\pi=\Der(\a)\in\D_m$ (for some $m\geq2$) has the decomposition 
\begin{equation}\label{decom:1}
\df_1(\a)=B_0\pi_1B_1\pi_2\cdots B_{m-1}\pi_m B_m,
\end{equation}
where each $B_i$ (possibly empty) is  a block with consecutive copies of $\neg1$. We also introduce four types of blocks for $\a$, which are essentially four types of slots of the underlying derangement $\pi$. Note that the first slot of $\pi$ must be of type IV and introduce the type generating function
$$
F'(x,a,b,y; u):=1+\sum_{n\geq 2} \Bigl( \sum_{\pi\in \D_n} 
	 x^{t_1(\pi)} a^{t_2(\pi)} b^{t_3(\pi)} y^{t_4(\pi)-1} \Bigr) u^n.
$$

We aim to connect $F'$ with both $F$ and $G$, so as to establish~\eqref{eq:gf:desdez}.
On the one hand, point~1) of Lemma~\ref{block type} and the discussion preceding it give us
$$t_1(\pi)=t_2(\pi) \quad\text{and}\quad t_1(\pi)+t_2(\pi)+t_3(\pi)+t_4(\pi)-1=m$$
for any $\pi\in\D_m$. Since $\xdes(\pi)=t_1(\pi)$ and $\des(\pi)=t_1(\pi)+t_4(\pi)-1$, we see
\begin{equation}\label{eq:F-F'}
F'(x,a,b,y; u)-1 =\sum_{n\geq 2}  (bu)^n \sum_{\pi\in \D_n} 
	\Bigl(\frac{xa}{b^2}\Bigr)^{t_1(\pi)} \Bigl(\frac{y}{b}\Bigr)^{t_4(\pi)-1} 
	=F\Bigl(\frac{y}{b},\frac{xa}{by};bu\Bigr)-1.
\end{equation}

 On the other hand, invoking the interpretation~\eqref{def2:G} of $G(x,y;u)$, the decomposition~\eqref{decom:1} and relationships  \eqref{des-change} and \eqref{dez-change} give rise to the appropriate substitutions for variables $x,a,b$ and $y$ in $F'$ to arrive at
\begin{equation}\label{eq:G-F'}
G(x,y;u)=\frac{1}{1-u}\times F'\Bigl(xy(1+\frac{xu}{1-u}),1+\frac{yu}{1-u},1+\frac{xyu}{1-u},\frac{xy}{1-u}; u\Bigr),
\end{equation}
where the factor $1/(1-u)$ accounts for the contribution from inserting the block $B_0$. 
Now combining \eqref{eq:F-F'} and \eqref{eq:G-F'} completes the proof.
\end{proof}

It remains to evaluate the generating function $F(t,s;u)$. As it turns out, the following trivariant generalization of $F(t,s;u)$ is more appropriate for calculation:
\begin{equation}
F(t,s,r; u):=\sum_{n\geq 0} \Bigl( \sum_{\pi\in \S_n} 
	 t^{\des(\pi)}  s^{\xdes(\pi)} r^{\fix(\pi)}\Bigr) \frac {u^n}{(1-t)^{n+1}}.
\end{equation}
The reduction to $F(t,s;u)$ is seen to be
\begin{equation*}
F(t,s;u)=(1-s)F(s,ts^{-1},0;(1-s)u).
\end{equation*}
To investigate $F(t,s,r; u)$, we introduce a {\it linear operator} $\rho$ on formal power series in 
$$K[[X,Y,Z]]:=K[[X_0,X_1,X_2,\ldots, Y_0, Y_1, Y_2, \ldots, Z_0, Z_1, Z_2,\ldots]],$$ 
where 
$K=\mathbb{Z}[[s,r,u]]$ and
$X_0,X_1,X_2,\ldots, Y_0, Y_1, Y_2, \ldots, Z_0, Z_1, Z_2,\ldots$ are commuting variables.

\begin{Def}[The operator $\rho$]
For each monomial~$M\in K[[X,Y,Z]]$, the index $i$ is said to be {\em effective} in $M$ if
\begin{itemize}
	\item[i)] $M$ contains $Y_i$ or $Z_i$, and
	\item[ii)] $M$ contains certain $X_k$ with $k>i$ such that
	\item[iii)] $M$ contains neither $Y_j$ nor $Z_j$, for each $i<j<k$.
\end{itemize}
For example, both $0$ and $1$ are effective in $X_1X_2Y_0Y_3Z_0Z_1$, while only $1$ is effective in $X_3X_4Y_1Z_0^2$. Let $\eff(M)$ denote the number of effective indices in $M$. Now we can define the operator $\rho: K[[X,Y,Z]]\rightarrow K$ by setting
\begin{equation}
\rho(M)=s^{\eff(M)},
\end{equation}
and extending linearly to all formal power series in $K[[X,Y,Z]]$. For the previous examples, we have $\rho(X_1X_2Y_0Y_3Z_0Z_1)=s^2$ and $\rho(X_3X_4Y_1Z_0^2)=s$.
\end{Def}

The following theorem can be viewed as the central result of this section.
\begin{theorem}\label{gfF:frac} Let $K=Z[[u]]$. We have
	\begin{equation}\label{eq:rhofrac}
		F(t,s,r ;u) = \sum_{m\geq 0} t^m \rho(S_m(u)),
	\end{equation}
	where
	\begin{equation}\label{def:Sm}
		S_m(u) = 
		\frac{\displaystyle \frac{\prod_{1\leq j \leq m}(1-uX_j)}{\prod_{0\le j\le m}(1-ruZ_j)}}{\displaystyle 1-\sum_{1\leq l \leq m} 
		\frac{u X_l \prod_{1\leq j \leq l-1}(1-u X_j)}{\prod_{0\leq j \leq l-1}(1-uY_j)}}.
	\end{equation}
\end{theorem}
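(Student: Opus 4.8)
The plan is to prove Theorem~\ref{gfF:frac} by setting up a bijective/combinatorial decomposition of permutations that is mirrored exactly by the algebraic structure of $S_m(u)$. The statistic $\xdes$ counts crossing descents, i.e.\ descents $i$ with $\pi_i\ge i+1\ge\pi_{i+1}$, and these are precisely the type~I slots from Section~\ref{sec:desdez:sym}. The governing idea should be that the operator $\rho$, which reads off $s^{\eff(M)}$, is engineered so that an \emph{effective index} records exactly one occurrence of a crossing descent. Thus the first step is to make precise the dictionary between monomials $M$ in the variables $X_i,Y_i,Z_i$ and combinatorial data attached to a permutation: the $X$-variables should encode non-excedance/excedance structure contributing to the $u^n/(1-t)^{n+1}$ weighting and to $\des$ via the power of $t$, the $Z$-variables should track fixed points (hence the factor $r$ and the $\prod(1-ruZ_j)$ in the denominator of the numerator), and the $Y$-variables should mark the positions where a descent can become \emph{crossing}, so that $\eff(M)$ correctly tallies $\xdes$.

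Next I would unpack the shape of $S_m(u)$. The outer quasi-geometric series $1/\bigl(1-\sum_{l} uX_l\prod_{j<l}(1-uX_j)/\prod_{j<l}(1-uY_j)\bigr)$ is the signature of a sequential (column-by-column or cycle-by-cycle) construction of a permutation with exactly $m$ ``descent-bottoms'' or $m$ marked letters, where each summand $uX_l\prod_{1\le j\le l-1}(1-uX_j)/\prod_{0\le j\le l-1}(1-uY_j)$ builds one more block. The plan is therefore to interpret $F(t,s,r;u)$ itself: the denominator $(1-t)^{n+1}$ together with $t^m$ is the standard $t$-analogue generating-function device (an MacMahon/Carlitz style expansion) that converts the $\des$-enumerator into a sum over $m\ge0$ of the number of permutations whose descent set is dominated in a prescribed way, weighted by $s^{\xdes}r^{\fix}$. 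I would first justify the identity
\begin{equation}\label{plan:carlitz}
\sum_{\pi\in\S_n} t^{\des(\pi)}s^{\xdes(\pi)}r^{\fix(\pi)}\frac{1}{(1-t)^{n+1}}
=\sum_{m\ge0} t^m\,[u^n]\,\rho(S_m(u)),
\end{equation}
by showing that $[u^n]\rho(S_m(u))$ counts pairs (permutation of $[n]$, a function/word bounded by the descent structure at level $m$) with the correct $s$- and $r$-weights; this is the usual passage from the ``$\des$'' statistic to a ``barred''/$P$-partition count.

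The concrete engine I would use is the transfer-matrix or \emph{linear operator} calculus for which $\rho$ is designed: build the permutation letter by letter (reading positions $1,2,\dots,n$), maintaining a state recording whether the current run is ascending or descending and whether the previous descent was crossing, and verify that each factor of $S_m(u)$ corresponds to one transition. The factor $uX_l\prod_{1\le j\le l-1}(1-uX_j)$ inserts a new letter opening the $l$-th descent block while the $X_j$'s for $j<l$ forbid earlier descents, the $\prod_{0\le j\le l-1}(1-uY_j)$ in the denominator is the geometric accumulation that lets arbitrarily many non-crossing non-excedance letters pile up, and the definition of ``effective index'' (conditions i)--iii): a $Y_i$ or $Z_i$ immediately followed, in index, by a larger $X_k$ with no intervening $Y_j,Z_j$) is exactly the local configuration $\pi_i\ge i+1\ge\pi_{i+1}$ that makes a descent crossing. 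The fixed-point factor $\prod_{0\le j\le m}(1-ruZ_j)$ then grafts in, at each level, the possibility of a fixed point carrying weight $r$, with $Z_i$ sharing the ``effectiveness'' role so that a fixed point sitting at a crossing position is counted by $\rho$.

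\textbf{Main obstacle.} I expect the crux to be verifying that $\eff(M)$ equals $\xdes$ on the nose, rather than merely up to an easily-correctable shift. The subtlety is that $\xdes$ is a \emph{global} count of positions satisfying $\pi_i\ge i+1\ge\pi_{i+1}$, whereas $\eff(M)$ is defined by a purely \emph{local} index-pattern (conditions i)--iii) involving the nearest larger $X$-index with no intervening $Y,Z$). Showing these coincide requires an inductive invariant: along the letter-by-letter construction one must track the ``height'' $i$ relative to the position, prove that a crossing descent forces exactly one effective $Y_i$ (or $Z_i$ for the fixed-point case) to be created and that no spurious effective indices arise, and — most delicately — handle the boundary conventions $w_0=w_{m+1}=+\infty$, $e_0=e_{m+1}=E$ and the empty-weak-derangement degenerate case so that the counts of type~I versus type~IV slots reconcile with the $t^m$ grading. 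Once this equivalence is pinned down, assembling \eqref{eq:rhofrac} from \eqref{plan:carlitz} is formal, and the advertised reduction to $F(t,s;u)$ via $F(t,s;u)=(1-s)F(s,ts^{-1},0;(1-s)u)$ follows by specialization.
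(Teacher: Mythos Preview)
Your proposal has the right high-level outline --- expand $1/(1-t)^{n+1}$ so that $[t^m]F$ becomes a weighted count of pairs $(\sigma,c)$ with $c$ a nonincreasing word bounded by $m-\des(\sigma)$, then match against $\rho(S_m(u))$ --- but it is missing the two concrete ingredients that make this work in the paper, and without them the plan does not close.

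First, $S_m(u)$ is \emph{not} obtained in the paper by building permutations letter by letter via a transfer matrix. It is the specialization under the homomorphism $\gamma$ (collapsing $Z_j\to X_j$, $T_j\to Y_j$, $Y'_j,T'_j\to rZ_j$, all scaled by $u$) of the right-hand side of the Decrease Value Theorem \cite[Thm.~1.3]{fh}, which is an identity about \emph{words} $w\in[0,m]^n$ and their weight $\psi(w)$. So the object that $S_m(u)$ naturally enumerates is words, not permutations, and the passage to permutations is done by the Gessel--Reutenauer standardization $\Phi_{\mathrm{GR}}:w\mapsto(\sigma,c)$. Your letter-by-letter reading of the factors $uX_l\prod_{j<l}(1-uX_j)/\prod_{j<l}(1-uY_j)$ as ``opening descent blocks'' has no evident bijective interpretation that lands on exactly this formula; absent the Decrease Value Theorem you have no reason to expect precisely these products.

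Second, and more seriously, you never specify \emph{which} monomial $M$ is attached to a given permutation $\sigma$ (or to a pair $(\sigma,c)$), so the central claim ``$\eff(M)=\xdes(\sigma)$'' has no content. In the paper the monomial is $\gamma\psi(w)=u^n\prod_{i\in\DEC}X_{x_i}\prod_{i\in\INC\setminus\REC}Y_{x_i}\prod_{i\in\INC\cap\REC}Z_{x_i}$, where $w$ is the word with $\Phi_{\mathrm{GR}}(w)=(\sigma,c)$; the indices on the variables are the values $\bar c_j=c_j+z_j$, and the key Lemma~\ref{lem:xdes-eff} shows that $i$ is a crossing descent of $\sigma$ iff $\bar c_{i+1}$ is effective in this monomial. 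The proof of that lemma uses the specific monotonicity of $\bar c$ and the excedance--decrease correspondence of $\Phi_{\mathrm{GR}}$ (Observation~\ref{obser:psi}), not the slot types I--IV of Section~\ref{sec:desdez:sym}; those slot types concern insertions of $\neg k$ into $k$-arrangements and play no role here. Until you name the word/monomial attached to $(\sigma,c)$ and prove the analogue of Lemma~\ref{lem:xdes-eff}, the ``main obstacle'' you identify cannot even be formulated precisely.
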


Our strategy to prove Theorem~\ref{gfF:frac} is as follows. First off, we utilize an updated version of the Gessel--Reutenauer standardization \cite{ge, fh, fh3}, denoted as $\Phi_{\mathrm{GR}}$, to map each word $w$ from $[0,m]^n$ onto a pair $(\sigma,c)$, where $\sigma\in\S_n$ and $c=c_1c_2\cdots c_n$ is a word whose letters are nonnegative integers satisfying: $m-\des(\sigma)\ge c_1\ge c_2\ge\cdots\ge c_n\ge 0$. This bijection entitles us to rewrite $F(t,s,r ;u)$ as a weighted (each $w$ weighted by $\psi(w)$, see Definition~\ref{weight:psi}) generating function over all words $w$ in $[0,m]^n$, after we make a key combinatorial observation (see Lemma~\ref{lem:xdes-eff}) to connect the statistic $\xdes$ on a permutation to the statistic $\eff$ on the weight of the corresponding word. Secondly, Theorem~1.3 in \cite{fh} enables us to evaluate this weighted generating function, via the help from the operator $\rho$, to be the right-hand side of \eqref{eq:rhofrac}. 

To begin the first step, we make some definitions and recall the Gessel--Reutenauer bijection. For $n,m\ge 0$, consider the set $\W_n(m):=[0,m]^n$ of all words of length $n$ and alphabet being $[0,m]:=\{0,1,\ldots,m\}$. Denote the subset of {\em {\bf\em n}on-{\bf\em i}ncreasing {\bf\em w}ords} as
$$\NIW_n(m):=\{c=c_1c_2\cdots c_n\in\W_n(m): c_1\ge c_2\ge\cdots\ge c_n\}.$$
We use the lexicographic order ``$>$'' to compare words in $\W_n(r)$. This total order extends to words with different length (but same alphabet) naturally. Namely, let $u\in\W_n(m)$ and $v\in\W_l(m)$ be two nonempty primitive words (none of them can be expressed as $w^b$ for some word $w$ and integer $b\ge 2$), we write $u\succeq v$, if and only if $u^b\ge v^b$ when $b$ is large enough. Here the multiplication is understood to be the concatenation of words.

Let $w=x_1x_2\cdots x_n$ be an arbitrary word over $\Z$ and set $x_{n+1}=+\infty$. For each $1\le i\le n$, we say that $i$ is a {\em decrease} (position) of $w$ if $$x_i= x_{i+1}=\cdots= x_j> x_{j+1},\; \text{for some $i\le j\le n$}.$$ So descent is the case of $i=j$. If on the contrary we have $$x_i= x_{i+1}=\cdots= x_j< x_{j+1},\; \text{for some $i\le j\le n$},$$ then we say that $i$ is an {\em increase} (position) of $w$, and an {\em ascent} (position) of $w$ if $i=j$. By our convention $x_{n+1}=+\infty$, thus $n$ is always an ascent. Furthermore, a position $i$ ($1\le i\le n$) is said to be a {\em record} if $$x_j\le x_i,\;\text{for all}\; 1\le j\le i-1.$$ When the index $i$ is a decrease (resp.~increase, record) of $w$, the corresponding letter $x_i$ is said to be a {\em decrease} (resp.~{\em increase, record}) {\em value} of $w$. The set of all decreases (resp.~increases, ascents, records) is denoted by $\DEC(w)$ (resp.~$\INC(w)$, $\ASC(w)$, $\REC(w)$). In particular, a descent (resp.~ascent) is always a decrease (resp.~increase), thus $\DES(w)\subseteq \DEC(w)$ (resp.~$\ASC(w)\subseteq \INC(w)$). Now we can define the aforementioned weight $\psi$ as was first introduced in~\cite{fh}.

\begin{Def}\label{weight:psi}
Take six sequences of commuting variables $(X_i), (Y_i), (Z_i), (T_i), (Y_i')$ and $(T_i')~(i=0,1,2,\ldots)$, and for each word $w\in\W_n(m)$ define the weight $\psi(w)$ of $w=x_1x_2\cdots x_n$ to be
\begin{align}
\psi(w):=& \prod_{i\in\DES}X_{x_i}\prod_{i\in\ASC\backslash\REC}Y_{x_i}\prod_{i\in\DEC\backslash\DES}Z_{x_i}\\
&\times\prod_{i\in(\INC\backslash\ASC)\backslash\REC}T_{x_i}\prod_{i\in\ASC\cap\REC}Y_{x_i}'\prod_{i\in(\INC\backslash\ASC)\cap\REC}T_{x_i}',\nonumber
\end{align}
where the argument ``$(w)$'' has been suppressed for typographic reasons. For example, if $w=1\;2\;\U{8}\;0\;\U{8}\;2\;10\; \U{13}\;4\;8\; \U{13}\;\U{11}\;\U{11}\;2\;5\;5\; \U{11}\;\U{6}\; \U{3}\;0$ with decrease values underlined, then 
\begin{equation}\label{exm:psi}
\psi(w)=Y'_1Y'_2X_8Y_0X_8Y_2Y'_{10}X_{13}Y_4Y_8X_{13}Z_{11}X_{11}Y_2T_5Y_5X_{11}X_6X_3Y_0.
\end{equation}
\end{Def}

For the sake of convenience, we review the Lyndon factorization of words.
\begin{Def}[Lyndon factorization]
A word $l=x_1x_2\cdots x_n\in\W_n(m)$ is said to be a {\em Lyndon word}~\cite{fh3,lo}, if either $n=1$, or if $n\ge 2$ and $x_1x_2\cdots x_n>x_ix_{i+1}\cdots x_nx_1\cdots x_{i-1}$ holds for every $i$ such that $2\le i\le n$. As shown for instance in \cite[Theorem~5.1.5]{lo}, each nonempty word $w$ composed of nonnegative integers, can be written uniquely as a product $w=l_1l_2\cdots l_k$, where each $l_i$ is a Lyndon word and $l_1\preceq l_2\preceq \cdots\preceq l_k$. This word factorization is called  {\em Lyndon factorization}. For instance, we have the Lyndon factorization 
$$
w=1\,2\,1\,0\,0\,2\,2\,4\,5\,3\,1\,0\,2\,1\,2\,5=1\,|\,2\,1\,0\,0\,|\,2\,|\,2\,|\,4\,|\,5\,3\,1\,0\,2\,1\,2\,|\,5,
$$
where factors are separated by vertical bars. 
\end{Def}
Finally, we recall the construction of the inverse $\Phi_{\mathrm{GR}}^{-1}:(\sigma,c)\mapsto w$ by means of one example. A description of this correspondence with more details can be found in Foata and the second author's previous paper~\cite{fh3}.

{
\small
\setlength{\tabcolsep}{3pt}
\begin{center}
%
\begin{tabular}{c c c  c c c c c c c c c c c c c c c c c c c c c}
Id &=& 1&2&3&4&5&6&7&8&9&10&11&12&13&14&15&16&17&18&19&20 \\
$\rightarrow\sigma$ &=& {\U 3}&\U{13}&{\U 5}&\U{10}&\U{16}&{\B 6}&2& \U{15}& \U{20}& \U{14}&4&11&7& \U{19}&8&12& \B{17}& \B{18}&1&9 \\
$z $ & =& 8&8&7&7&7&6&5&5&5&4&3&3&2&2&1&1&1&1&0&0 \\
$\rightarrow c $ & =& 5&5&4&4&4&4&3&3&3&2&2&2&2&1&1&1&1&0&0&0 \\
$\bar c$ &=& 13&13&11&11&11&10&8&8&8&6&5&5&4&3&2&2&2&1&0&0 \\
$\sigma $&=& (\B{18})& (\B{17}) &(9&20)&(8&15)& (\B{6})&(2&13&7)&(1&3&5&16&12&11&4&10&14&19) \\
$\check\sigma$&=& \B{18}& \B{17}&9&20&8&15& \B{6}&2&13&7&1&3&5&16&12&11&4&10&14&19 \\
$\mapsto w$& =& 1\;|&2\;|& \U{8}&0\;|& \U{8}&2\;|&10\;|& \U{13}&4&8\;|& \U{13}& \U{11}& \U{11}&2&5&5& \U{11}& \U{6}& \U{3}&0\\
\end{tabular}
\end{center}
}

In above example $n=20$. The second row contains the values $\sigma(i)$ ($i=1,2,\ldots,n$) of the {\em starting} permutation $\sigma$. The fixed points in $\sigma$ are written in boldface, while the {\em excedances} $\sigma(i)>i$  are underlined. The third row is the vector $z=z_1z_2\cdots z_n$ defined as
\begin{equation}\label{def:z-seq}
z_i:=|\{j:i\le j\le n-1,\;\sigma(j)>\sigma(j+1)\}|, \; \text{for}\; 1\le i\le n,
\end{equation}
so that $z_1=\des(\sigma)$. The fourth row is the {\em starting} nonincreasing word $c=c_1c_2\cdots c_n$. The fifth row $\bar{c}=\bar c_1 \bar c_2 \cdots\bar c_n$ is the word defined by
\begin{equation*}
\bar c_i:=z_i+c_i,\; \text{for}\; 1\le i\le n.
\end{equation*}
The sixth row is again the permutation $\sigma$ but now in its {\em cycle notation}, with the minima leading each cycle and cycles listed with their first letters decreasing. When removing the parentheses in the sixth row we arrive at the seventh row denoted as $\check\sigma=\check\sigma(1)\check\sigma(2)\cdots\check\sigma(n)$. The bottom row is the word $w=x_1x_2\cdots x_n$ corresponding to the pair $(\sigma,c)$ defined by 
\begin{equation}\label{eq:w-barc}
x_i:=\bar c_{\check\sigma(i)},\;\text{for}\; 1\le i\le n.
\end{equation}
The underlined letters in $w$ are decrease values of $w$. Finally, the vertical bars inserted into $w$ indicate its Lyndon factorization.

It is known (cf.~\cite{fh,fh3}) that all the above steps are reversible and $\Phi_{\mathrm{GR}}:w\mapsto(\sigma,c)$ is indeed a bijection, essentially due to Gessel and Reutenauer~\cite{ge}, from $\W_n(m)$ onto the set of pairs $(\sigma,c)$ such that $\sigma\in\S_n$, $\des(\sigma)\le m$ and $c\in\NIW_n(m-\des(\sigma))$. The following observation was made in~\cite{fh}.

\goodbreak

\begin{Observation}\label{obser:psi}
Suppose $\Phi_{\mathrm{GR}}(w)=(\sigma,c)$, then we have
\begin{itemize}
	\item[(i)] $i\in\DEC(w)$ if and only if $\check\sigma(i)<\check\sigma(i+1)$, 
	i.e., $\check\sigma(i)$ is an excedance of $\sigma$.
	\item[(ii)] $i\in\INC(w)\cap\REC(w)$ if and only if $\check\sigma(i)\in\Fix(\sigma)$.
\end{itemize}
\end{Observation}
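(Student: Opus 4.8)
The plan is to prove Observation~\ref{obser:psi} by unwinding the two defining relations of the inverse Gessel--Reutenauer map, namely the decreasing-cycle-minima ordering encoded in $\check\sigma$ and the formula $x_i=\bar c_{\check\sigma(i)}$ from \eqref{eq:w-barc}. The key structural fact I would isolate first is how consecutive values $\check\sigma(i),\check\sigma(i+1)$ sit relative to one another: because $\sigma$ is written in cycle notation with each cycle led by its minimum and the cycles arranged with decreasing first letters, the word $\check\sigma$ is a concatenation of cycle-blocks, and within a block the entries trace a cycle starting from its minimum. I would track two cases: the step from $\check\sigma(i)$ to $\check\sigma(i+1)$ inside a single cycle, versus the step crossing from the end of one cycle-block to the minimum beginning the next block.

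For part (i), I would compare $x_i=\bar c_{\check\sigma(i)}$ and $x_{i+1}=\bar c_{\check\sigma(i+1)}$ using the monotonicity of $\bar c$. The point is that $\bar c$ is weakly decreasing (since $c$ is nonincreasing and $z_i$ is nonincreasing in $i$, so $\bar c_i=z_i+c_i$ decreases), and moreover one can pin down exactly when $\bar c_a<\bar c_b$ versus $\bar c_a=\bar c_b$ in terms of whether there is a descent of $\sigma$ strictly between positions $b$ and $a$. Concretely, $i\in\DEC(w)$ means $x_i$ begins a maximal weakly-constant run that then strictly drops, and I would translate this via \eqref{eq:w-barc} into the statement $\bar c_{\check\sigma(i)}$ lies strictly above the run value following it, which by the $z$-definition \eqref{def:z-seq} forces $\check\sigma(i)<\check\sigma(i+1)$, i.e. $\check\sigma(i)$ is an excedance. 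The converse direction uses that within a cycle the successor of $\check\sigma(i)$ in $\check\sigma$ is precisely $\sigma(\check\sigma(i))$, so $\check\sigma(i)<\check\sigma(i+1)$ is literally the excedance condition $\sigma(j)>j$ at $j=\check\sigma(i)$, and the block-boundary case is handled separately using that a new cycle begins at its minimum.

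For part (ii), the condition $i\in\INC(w)\cap\REC(w)$ says position $i$ is both an increase and a record of $w$; I would argue this is equivalent to $x_i$ being a record value that sits at the start of an increasing (weakly) run, and show via \eqref{eq:w-barc} that this happens exactly when $\check\sigma(i)$ is the minimum leading a cycle that is a fixed point, i.e. a singleton cycle $(\check\sigma(i))$ with $\sigma(\check\sigma(i))=\check\sigma(i)$. Here the record condition forces $\check\sigma(i)$ to begin a new cycle-block (so its $\bar c$-value is the largest seen so far among that block's regime), and the increase condition rules out a strict drop, which for a block-minimum can only be compatible with the cycle being a fixed point rather than a longer cycle where the minimum is immediately followed by a larger entry that then descends.

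The main obstacle I anticipate is the careful bookkeeping at the cycle-block boundaries: the generic within-cycle step is clean because $\check\sigma(i+1)=\sigma(\check\sigma(i))$, but at the junction between two cycles one must verify that the $\bar c$-values and the $z$-vector interact correctly so that the $\DEC$/$\INC$/$\REC$ status computed from $w$ still matches the excedance/fixed-point status, and this is precisely where the decreasing order of cycle-minima and the definition \eqref{def:z-seq} of $z_i$ as counting descents of $\sigma$ weakly to the right of $i$ must be combined. Since this Observation is quoted from~\cite{fh}, I expect the cleanest route is to reduce the verification to the two local transition rules and then invoke the reversibility of $\Phi_{\mathrm{GR}}$ already recalled above, rather than re-deriving the bijection from scratch.
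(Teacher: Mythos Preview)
The paper does not supply its own proof of this Observation; it is stated with the attribution ``The following observation was made in~\cite{fh}'' and then used without further argument. So there is no in-paper proof to compare your proposal against.

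Your plan has the right architecture: the identity $\check\sigma(i+1)=\sigma(\check\sigma(i))$ inside a cycle-block makes the equivalence ``$\check\sigma(i)<\check\sigma(i+1)$ iff $\check\sigma(i)$ is an excedance of $\sigma$'' immediate there, and at a block boundary both sides are false (the last element of a cycle maps to its minimum, while the next block begins with a strictly smaller minimum by the decreasing-minima convention). Likewise your reading of part~(ii) --- record plus increase forces a singleton cycle --- is the correct picture.

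Where your sketch is genuinely incomplete is the link between $\DEC(w)$ and the inequality $\check\sigma(i)<\check\sigma(i+1)$ in the presence of ties. From the weak monotonicity of $\bar c$ you only get $x_i\ge x_{i+1}$ when $\check\sigma(i)<\check\sigma(i+1)$ (and $x_i\le x_{i+1}$ otherwise); deciding whether a position with $x_i=x_{i+1}$ is a decrease or an increase requires following the maximal constant run to its end and determining whether it drops or rises. Your sentence ``$i\in\DEC(w)$ means $x_i$ begins a maximal weakly-constant run that then strictly drops'' is already slightly off ($i$ need not begin the run), and the subsequent ``translation via~\eqref{eq:w-barc}'' does not actually carry out the needed argument. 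The resolution in~\cite{fh} goes through the Lyndon-word structure: equal letters in $w$ correspond to positions with equal $\bar c$-value, hence to an ascending run of $\sigma$ with constant $c$-value, and one checks that within such a block the excedance/non-excedance pattern forces the correct decrease/increase classification at the end of the run. You have identified this as ``the main obstacle,'' which is accurate; filling it in is the real content of the proof.
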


Guided by Observation~\ref{obser:psi}, we let $\gamma$ be the homomorphism defined by the following substitutions of variables:
	\begin{equation}\label{subs}
	\gamma:=\{X_j\leftarrow u X_j, \
	Z_j\leftarrow u X_j, \
	Y_j\leftarrow u Y_j, \
	T_j\leftarrow u Y_j, \
	Y'_j\leftarrow ru Z_j, \
	T'_j\leftarrow ru Z_j\}.
	\end{equation}
The following feature of $\Phi_{\mathrm{GR}}$ regarding crossing descents is key to our calculation. 
\begin{lemma}\label{lem:xdes-eff}
Suppose $\Phi_{\mathrm{GR}}(w)=(\sigma,c)$, then $i$ is a crossing descent of $\sigma$, if and only if the index $\bar c_{i+1}$ is effective in $\gamma\psi(w)$.
\end{lemma}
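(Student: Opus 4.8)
The plan is to evaluate the monomial $\gamma\psi(w)$ position by position, read off for each value $v$ exactly which of $X_v,Y_v,Z_v$ occur, and then match effective values with crossing descents through the block structure of the word $\bar c$. First I would reindex $\gamma\psi(w)$ by the permutation $\check\sigma$. Since $x_i=\bar c_{\check\sigma(i)}$ and $\check\sigma$ is a rearrangement of $[n]$, writing $q=\check\sigma(i)$ records one variable of index $\bar c_q$ for each $q\in[n]$. Collapsing the six families under the substitution $\gamma$ in \eqref{subs}, the factor attached to $q$ is, up to the irrelevant scalars $u,r$: $X_{\bar c_q}$ when $i\in\DEC(w)$, $Y_{\bar c_q}$ when $i\in\INC(w)\setminus\REC(w)$, and $Z_{\bar c_q}$ when $i\in\INC(w)\cap\REC(w)$. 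By Observation~\ref{obser:psi} these three cases are precisely $q$ being an excedance, a strict nonexcedance ($\sigma(q)<q$), or a fixed point of $\sigma$. Thus $Y_v$ or $Z_v$ occurs in $\gamma\psi(w)$ iff some $q$ with $\bar c_q=v$ is a nonexcedance, while $X_v$ occurs iff some such $q$ is an excedance. I would also record two elementary reformulations: $i$ is a crossing descent iff $i$ is an excedance and $i+1$ a nonexcedance of $\sigma$; and, unwinding the definition of $\rho$, the value $v$ is effective iff $\gamma\psi(w)$ contains $Y_v$ or $Z_v$ and the smallest index $v^+>v$ occurring in $\gamma\psi(w)$ carries an $X$ (so in particular such a $v^+$ must exist).

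The heart of the argument is a structural fact about the blocks of $\bar c$. Since $\bar c_q=z_q+c_q$ with both $z$ (a suffix descent count) and $c$ (a nonincreasing word) nonincreasing, $\bar c$ is nonincreasing; and on a maximal block $[a,b]$ on which $\bar c$ is constant the equality $z_q+c_q=z_{q+1}+c_{q+1}$ forces $z_q=z_{q+1}$ and $c_q=c_{q+1}$ for $a\le q<b$. Hence $\sigma$ has no descent inside the block, i.e.\ $\sigma(a)<\sigma(a+1)<\cdots<\sigma(b)$, so that $\sigma(q)-q$ is nondecreasing on $[a,b]$. Consequently the nonexcedances of $\sigma$ occupy a prefix of $[a,b]$ and the excedances a suffix. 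This is exactly what lets me upgrade the existential conditions above to boundary conditions: $Y_v$ or $Z_v$ occurs (some nonexcedance in the $v$-block) iff the first position $a$ of the $v$-block is a nonexcedance, and—since the value $v^+$ is attained precisely on the block ending at $a-1$—that $v^+$ carries an $X$ iff the last position $a-1$ of that preceding block is an excedance.

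Combining these, $v=\bar c_a$ is effective iff $a-1$ is an excedance and $a$ is a nonexcedance, i.e.\ iff $a-1$ is a crossing descent (with the convention that when $a=1$ no $v^+$ exists and $v$ is not effective). I would close by observing that $i\mapsto\bar c_{i+1}$ carries crossing descents to effective values bijectively: if $i$ is a crossing descent it is in particular a descent, so $\bar c_i>\bar c_{i+1}$ and $i+1$ begins its $\bar c$-block, whence $i=a-1$ and $\bar c_{i+1}=\bar c_a$ is effective by the displayed equivalence; conversely every effective $v$ arises as $\bar c_a$ for the start $a$ of its block and is matched to the unique crossing descent $a-1$. This gives the asserted equivalence and, upon summing, the identity $\xdes(\sigma)=\eff(\gamma\psi(w))$ that the calculation requires.

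I expect the main obstacle to be the block-structure step: recognizing that constancy of $\bar c$ on a block forces $z$ and $c$ to be individually constant there, hence that $\sigma$ is increasing on the block. It is precisely this monotonicity of $\sigma(q)-q$ that converts the ``some position'' conditions hidden inside the operator $\rho$ into the boundary statements about positions $a-1$ and $a$, thereby pinning each effective value to a single crossing descent rather than to the whole block.
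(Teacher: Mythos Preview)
Your proposal is correct and follows essentially the same route as the paper's own proof. Both arguments hinge on the same two ingredients: translating, via Observation~\ref{obser:psi} and the substitution $\gamma$, the variable type at index $\bar c_q$ into the excedance type of $q$; and exploiting the block structure of the nonincreasing word $\bar c$, where constancy forces $z$ (hence $\sigma$) to be increasing, so that nonexcedances sit at the left end of a block and excedances at the right. The paper treats the two implications separately and leaves the ``no $Y_j,Z_j$ strictly between'' clause of the effective condition implicit (it is vacuous because $\bar c_i$ and $\bar c_{i+1}$ are values at consecutive positions of a nonincreasing word), whereas you make this explicit through the reformulation in terms of the smallest occurring $v^+>v$; but the underlying mechanism is identical.
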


Take $w$ as in the running example of $\Phi_{\mathrm{GR}}^{-1}$  above. By the calculation in~\eqref{exm:psi}, we have 
$$
\gamma\psi(w)=u^{20}X_3X_6X_8^2X_{11}^3X_{13}^2Y_0^2Y_2^2Y_4Y_5^2Y_8Z_1Z_2Z_{10},
$$
and so the effective indices in $\gamma\psi(w)$ are $2$, $5$ and $10$, which correspond respectively to the crossing descents $14$, $10$ and $5$ of the permutation $\sigma$.

\begin{proof}[{\bf Proof of Lemma~\ref{lem:xdes-eff}}]
First we show the  ``only if'' part. Suppose $i$ is a crossing descent of $\sigma$, i.e., $\sigma(i)\ge i+1\ge \sigma(i+1)$. Note that $i$ is a descent of $\sigma$ so $z_i=z_{i+1}+1$ hence $\bar c_i>\bar c_{i+1}$. In view of  Observation~\ref{obser:psi} (i),  we have 
\begin{itemize}
\item $i+1\ge \sigma(i+1)$ means that $i+1=\check\sigma(\check\sigma^{-1}(i+1))$ is not an excedance of $\sigma$, which implies that either $Y_{\bar c_{i+1}}$ or $Z_{\bar c_{i+1}}$ appears in $\gamma\psi(w)$, and
\item $\sigma(i)>i$ means that $i=\check\sigma(\check\sigma^{-1}(i))$ is an excedance of $\sigma$, which indicates that $X_{\bar c_i}$  appears in $\gamma\psi(w)$. 
\end{itemize}
By definition this means that $\bar c_{i+1}$ is effective in $\gamma\psi(w)$.

It remains to show the ``if'' part. Conversely, suppose that certain  $a$  is effective in $\gamma\psi(w)$. Then, we can find indices $i\geq1$, $j\geq1$ and $k\geq0$ such that  $b=\bar c_i>a=\bar c_{i+1}=\bar c_{i+2}=\cdots=\bar c_{i+j}>\bar c_{i+j+1}$ and $\bar c_{i-k-1}>\bar c_{i-k}=\bar c_{i-k+1}=\cdots=\bar c_i=b$.  We aim to show that $i$ is a crossing descent of $\sigma$. Since $a$ is effective in $\gamma\psi(w)$, at least one of $Y_a$ and $Z_a$ appears in $\gamma\psi(w)$, which implies that one of $i+\ell$, $1\leq \ell\leq j$, must be a non-excedance of $\sigma$. This forces $i+1$ to be a non-excedance, as $\sigma(i+1)<\sigma(i+2)<\cdots<\sigma(i+\ell)$. On the other hand, we must have $X_b$ appear in $\gamma\psi(w)$, which implies one of $i-\ell'$, $0\leq \ell'\leq k$, must be an excedance of $\sigma$.
This forces $i$ to be an excedance of $\sigma$, as $\sigma(i-\ell')<\sigma(i-\ell'+1)<\cdots<\sigma(i)$. 
In conclusion, $i$ is a crossing descent of $\sigma$, as desired. 
\end{proof}

We will also make use of the following version of the so-called ``Decrease Value Theorem''.
\begin{theorem}[Theorem 1.3 in \cite{fh}]
We have:
\begin{equation}\label{eq:decthm}
\sum_{n\ge 0}\sum_{w\in\W_n(m)} \psi(w)
=\dfrac{\dfrac{\prod\limits_{1\le j\le m}\dfrac{1-Z_j}{1-Z_j+X_j}}{\prod\limits_{0\le j\le m}\dfrac{1-T'_j}{1-T'_j+Y'_j}}}
{1-\sum\limits_{1\le l\le m}\dfrac{\prod\limits_{1\le j\le l-1}\dfrac{1-Z_j}{1-Z_j+X_j}}{\prod\limits_{0\le j\le l-1}\dfrac{1-T_j}{1-T_j+Y_j}}\dfrac{X_l}{1-Z_l+X_l}}.
\end{equation}
\end{theorem}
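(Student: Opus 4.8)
The plan is to prove \eqref{eq:decthm} by induction on the alphabet bound $m$, peeling off the maximal letter. Write $F^{(m)}:=\sum_{n\ge0}\sum_{w\in\W_n(m)}\psi(w)$ for the left-hand side, and introduce the abbreviations
\[
p_j:=\frac{1-Z_j}{1-Z_j+X_j},\qquad q_j:=\frac{1-T'_j}{1-T'_j+Y'_j},\qquad \tilde q_j:=\frac{1-T_j}{1-T_j+Y_j}.
\]
Using $1-p_l=\frac{X_l}{1-Z_l+X_l}$, the right-hand side of \eqref{eq:decthm} becomes $A(m)/(1-B(m))$ with $A(m)=\prod_{1\le j\le m}p_j/\prod_{0\le j\le m}q_j$ and $B(m)=\sum_{1\le l\le m}(1-p_l)\,\prod_{1\le j\le l-1}p_j/\prod_{0\le j\le l-1}\tilde q_j$. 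The base case $m=0$ is immediate: the only words are $0^n$, with $\psi(0^n)=(T'_0)^{n-1}Y'_0$ for $n\ge1$, so $F^{(0)}=1+\frac{Y'_0}{1-T'_0}=1/q_0=A(0)/(1-B(0))$.

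For the inductive step I would factor every $w\in\W_n(m)$ at its maximal runs of the letter $m$, writing $w=u_0\,m^{a_1}u_1\cdots m^{a_k}u_k$ with each $u_i\in\W(m-1)$ and $a_i\ge1$. The engine of the proof is a \emph{decomposition lemma} describing how $\psi$ transforms. First, since $m$ is the global maximum, every occurrence of $m$ is a record, and a maximal run $m^{a}$ followed by a strictly smaller letter is a \emph{descending} run contributing $Z_m^{a-1}X_m$, while a \emph{terminal} run (followed only by the boundary $+\infty$) is an \emph{ascending} run contributing $(T'_m)^{a-1}Y'_m$; summing over $a\ge1$ gives $\xi:=\frac{X_m}{1-Z_m}$ and $\eta:=\frac{Y'_m}{1-T'_m}$ respectively. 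Second, after the first $m$ no letter of value $<m$ can be a record, so every letter of $u_1,\dots,u_k$ is demoted from record to non-record while its classification into $\DES,\ \DEC\setminus\DES,\ \ASC\setminus\REC,\ (\INC\setminus\ASC)\setminus\REC$ is otherwise unchanged (crucially, the last letter of each $u_i$ is an ascent whether read against the following $m$ or against $+\infty$, and no other comparison is disturbed). Hence each $u_i$ with $i\ge1$ is weighted by the \emph{demoted} weight $\psi^{\flat}$, i.e.\ $\psi$ with $Y'\to Y$ and $T'\to T$, whereas $u_0$ keeps genuine records.

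Assembling the blocks (case $k=0$; interior subwords $u_1,\dots,u_{k-1}$ nonempty with descending $m$-runs; an optional terminal ascending run) yields the closed recursion
\[
F^{(m)}=\frac{(1+\eta)\,\mathcal{F}}{1-\xi(\mathcal{F}^{\flat}-1)},\qquad \mathcal{F}:=F^{(m-1)},\quad \mathcal{F}^{\flat}:=\sum_{w\in\W(m-1)}\psi^{\flat}(w).
\]
The only extra bookkeeping is that $\mathcal{F}^{\flat}$ is carried alongside $\mathcal{F}$, but it is not a new unknown: since $B(m-1)$ contains no record variables $Y',T'$, applying the inductive formula with $Y'\to Y,\ T'\to T$ shows that $\mathcal{F}=A(m-1)/D$ and $\mathcal{F}^{\flat}=cA(m-1)/D$ share the denominator $D:=1-B(m-1)$ and differ only by the numerator factor $c:=\prod_{0\le j\le m-1}q_j/\tilde q_j$.

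Finally I would substitute these two expressions into the recursion and clear the common factor $1/D$. Using $1+\xi=1/p_m$ and $1+\eta=1/q_m$, the numerator becomes $(1+\eta)A(m-1)=A(m)/p_m$, and the denominator becomes
\[
D(1+\xi)-\xi\,cA(m-1)=\tfrac1{p_m}\bigl[D-(1-p_m)cA(m-1)\bigr]=\tfrac1{p_m}\bigl(1-B(m)\bigr),
\]
since $cA(m-1)=\prod_{1\le j\le m-1}p_j/\prod_{0\le j\le m-1}\tilde q_j$ is exactly the $l=m$ summand factor in $B$. The factors $1/p_m$ cancel, leaving precisely $A(m)/(1-B(m))$ and closing the induction. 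The main obstacle is the decomposition lemma itself: rigorously verifying that removing the largest letter demotes the records of every subsequent letter and fixes the run-types as stated, \emph{without} otherwise perturbing the six-fold partition of positions underlying $\psi$. Once this boundary analysis is pinned down, the remainder is the routine fractional-linear simplification displayed above.
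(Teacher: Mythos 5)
Your proof is correct, but there is nothing in the paper to measure it against: the paper does not prove this statement at all. It is quoted, with attribution, as Theorem~1.3 of Foata and Han \cite{fh}, and is then used in Section~4 as a black box. So your argument is necessarily an independent route, and I checked it in detail. The decomposition lemma, which you rightly single out as the crux, does hold: in $w=u_0\,m^{a_1}u_1\cdots m^{a_k}u_k$ no run of equal letters crosses a block boundary (all letters of the $u_i$ are $<m$); a non-terminal $m$-run is followed by a strictly smaller letter, hence consists of decreases with exactly one descent and contributes $Z_m^{a-1}X_m$ --- here it matters, and you use implicitly, that $\psi$ assigns $X$ and $Z$ to descents and non-descent decreases irrespective of record status, since these $m$'s are all records; a terminal $m$-run consists of record increases ending in a record ascent, contributing $(T'_m)^{a-1}Y'_m$; each position of $u_i$ with $i\ge 1$ has the same $\DES/\DEC/\ASC/\INC$ type as in the standalone word, because the letter closing its run is either internal to $u_i$ or equal to $m$ (resp.\ $+\infty$), both strictly larger than every letter of $u_i$, while its record status is destroyed by the preceding $m$; and the prefix $u_0$ keeps its weight verbatim. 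This gives exactly your recursion $F^{(m)}=(1+\eta)\mathcal{F}\bigl(1-\xi(\mathcal{F}^\flat-1)\bigr)^{-1}$. The algebraic half is also sound: the substitution $Y'\mapsto Y$, $T'\mapsto T$ is a continuous ring homomorphism of formal power series, so it may legitimately be applied to the inductive identity to evaluate $\mathcal{F}^\flat$ with the same denominator $D=1-B(m-1)$; the simplification via $1+\xi=1/p_m$, $1+\eta=1/q_m$, $\xi=(1-p_m)/p_m$, together with the observation that $(1-p_m)\,cA(m-1)$ is precisely the $l=m$ summand of $B(m)$, closes the induction; the base case $F^{(0)}=1/q_0$ is right, and I confirmed the resulting series agrees with the theorem through low-order coefficients. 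What your route buys is a short, elementary, self-contained proof of a result the paper imports wholesale; it also makes transparent why the right-hand side has its product-over-quotient shape (it is the unrolled form of the one-step recursion obtained by peeling off the maximal letter). The one convention your argument leans on essentially --- the padding $x_{n+1}=+\infty$, which is what makes the terminal $m$-run ascending and all-record --- is indeed the convention stated in the paper, so no adjustment is needed.
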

We are in a position to prove Theorem~\ref{gfF:frac}.
\begin{proof}[{\bf Proof of Theorem~\ref{gfF:frac}}]
	For each word $w=x_1x_2\cdots x_n \in \W_n(m)$,
	we have:
	\begin{equation}\label{eq:gammapsiw}
		\gamma\psi(w)= 
			{\displaystyle	u^n \prod_{i\in \DEC} {X_{x_i}} \prod_{i\in \INC\setminus\REC} {Y_{x_i}} \prod_{i\in \INC \cap \REC} {Z_{x_i}} }.
	\end{equation}
	Applying $\gamma$ to \eqref{eq:decthm} we get:
	\begin{equation}\label{eq:XYw}
		\sum_{n\ge 0}\sum_{w\in\W_n(m)} u^n \prod_{i\in \DEC} {X_{x_i}} \prod_{i\in \INC\setminus\REC} {Y_{x_i}} \prod_{i\in \INC \cap \REC} {Z_{x_i}}  = 
		S_m(u).
	\end{equation}
	By $\Psi_{\mathrm{GR}}: w\mapsto (\sigma, c)$ and Observation~\ref{obser:psi}, the left-hand side of \eqref{eq:XYw} is equal to
	\begin{equation}\label{eq:desxdes:perm}
		\sum_{n\ge 0} u^n \sum_{\substack{\sigma \in \S_n \\ \des(\sigma)\leq m}}   
		\sum_{c\in \NIW_n(m-\des(\sigma))} r^{\fix(\sigma)} W_{(\sigma,c)},
	\end{equation}
	where 
	$$
	W_{(\sigma,c)} = \prod_{j<\sigma(j)} X_{c_j+z_j} \prod_{j>\sigma(j)} Y_{c_j+z_j} \prod_{j=\sigma(j)} Z_{c_j+z_j}.
	$$
	With
	$$
	W(\sigma;t) := \sum_{k\geq 0} t^k \sum_{c \in \NIW_n(k)} W_{(\sigma,c)},
	$$
	the graded form of \eqref{eq:desxdes:perm} reads
	\begin{equation}\label{eq:desxdes:graded}
		\sum_{n\geq 0} u^n \sum_{\sigma \in \S_n}   t^{\des(\sigma)} r^{\fix(\sigma)}W(\sigma;t)
		= \sum_{m\geq 0} t^m S_m(u).
	\end{equation}
	Now Lemma~\ref{lem:xdes-eff} says that $i$ is a crossing descent of $\sigma$, if and only if $\bar c_{i+1}$ is effective in $W_{(\sigma,c)}$, therefore we have
	\begin{equation}\label{eq:rhoY}	
	\rho(W_{(\sigma, c)})= s^{\eff(W_{(\sigma, c)})} = s^{\xdes(\sigma)}.
	\end{equation}	
    Consequently,
	$$
	\rho(W(\sigma;t))= s^{\xdes(\sigma)}  \sum_{k\geq 0} t^k\times |\NIW_n(k)|
	=\frac{s^{\xdes(\sigma)} }{(1-t)^{n+1}}.
	$$
	Applying $\rho$ to both sides of \eqref{eq:desxdes:graded} yields \eqref{eq:rhofrac}.
\end{proof}

Although Theorem \ref{gfF:frac} is somewhat complicated, it allows us to derive some 
formulae for special cases, with the help of a computer algebra system.
A trick to evaluate the fraction at the right-hand side of \eqref{eq:rhofrac} by the operator $\rho$ is that we can replace $x^k$, for $x=X_j$,  $Y_j$ or $Z_j$, by $x$ for $k\geq 1$. That is
\begin{equation}\label{rho:trick}
\rho\Bigl(\frac{1}{a+bx}\Bigr) = \rho\Bigl(\frac{1}{a}(1-\frac{bx}{a+b})\Bigr).
\end{equation}
Using this trick, we can derive the formulae for $m=1,2$ as follows. 

{\bf (I)} Special case $m=1$. The term $\rho(S_1(u))$ is equal to 
\begin{align*}
	&\phantom{==}\rho\left( \frac{1-uX_1}{(1-ruZ_0)(1-ruZ_1)} \middle/ {(1- \frac{u X_1}{ 1-uY_0})} \right)  \\
	&=
	\rho\left( \frac{(1-uX_1)(1-uY_0)}{(1-ruZ_0)(1-ruZ_1) (1- u X_1-uY_0) }\right)\\
	&=
	\frac{1}{(1-ru)^2}\left(1 + \rho( \frac{u^2X_1Y_0}{1- u X_1-uY_0})\right)= \frac{1}{(1-ru)^2}
	\Bigl(1 + \frac{su^2}{ 1- 2u}\Bigr).
\end{align*}
We now give a combinatorial proof of the formula above. When we extract the coefficient of $t^1$ from \begin{equation*}
F(t,s,r; u)=\sum_{n\geq 0} \Bigl( \sum_{\sigma\in \S_n} 
	 t^{\des(\sigma)}  s^{\xdes(\sigma)} r^{\fix(\sigma)}\Bigr) \frac {u^n}{(1-t)^{n+1}},
\end{equation*} 
there are two cases to be considered:
\begin{enumerate}
	\item The factor $1/{(1-t)^{n+1}}$ contributes $(n+1)t^1$ while the permutation satisfies $\des(\sigma)=0$. Then the permutation must be the identity and contributes $r^nu^n$. So we sum up over all $n\ge 0$ to get the term $1/{(1-ru)^2}$.
	\item The factor $1/{(1-t)^{n+1}}$ contributes $t^0=1$ while the permutation satisfies $\des(\sigma)=1$. Now note that any permutation $\sigma\in\S_n$ that has exactly one descent (the so-called ``Grassmannian permutation''), can be uniquely decomposed as $$\sigma=12\cdots i~\sigma_{i+1}\sigma_{i+2}\cdots\sigma_{i+j}~(i+j+1)(i+j+2)\cdots n,$$
	where $0\le i \le n-2$, $2\le j\le n-i$, and $\sigma_{i+1}\cdots\sigma_{i+j}$ contains no fixed points and exactly one descent at $i+j=\sigma_{k}>\sigma_{k+1}=i+1$ for certain $k$ ($i+1\le k<i+j$), hence this descent is a crossing descent. All the other letters $i+2,i+3,\ldots,i+j-1$ can appear either to the left of $i+j$, or to the right of $i+1$, making $2^{j-2}$ choices in total. So we see the contributions from all such $\sigma$'s are
	$$\frac{s}{(1-ru)^2}\cdot \Bigl(\sum_{j\ge 2}2^{j-2}u^j\Bigr)=\frac{su^2}{(1-ru)^2(1-2u)},$$
	which is precisely the remaining term in the formula above.
\end{enumerate}
 
{\bf (II)} Special case $m=2$. With the help of a computer algebra program, we obtain the term $\rho(S_2(u))$, which is equal to

 \begin{align*}
 	&\frac{1}{(1-ru)^3}
 	\Bigl(	1 +
 	 \frac{{\left((3-r)  u^{2} - 7  u + 3\right)} u^{2}\cdot s}{{\left(u^{2} - 3  u + 1\right)} {\left(1- 2  u \right)}}
 	+
 	\frac{{\left(-3  r u + 2  u^{2} + r - 4  u + 2\right)}   u^{4}\cdot s^2}{{\left(u^{2} - 3  u + 1\right)} {\left(1-3  u \right)} {\left(1-2  u \right)}}
 	\Bigr).
 \end{align*}

Let $\S_n^{(m)} = \{ \pi \in \S_n : \des(\pi)=m\}$ and 
\begin{equation}\label{def:Pj}
	P_m(u) =  \sum_{n\geq 0}\Bigl(	\sum_{\pi \in \S_n^{(m)}} s^{\xdes(\pi)} r^{\fix(\pi)}\Bigr) u^n
\end{equation}
be the generating function for the $\xdes$ and $\fix$ statistics over the 
set of all permutations with exactly $m$ descents.
Theorem \ref{gfF:frac} implies that
\begin{equation}\label{eq:no:deno}
\sum_{n\geq 0} \Bigl( \sum_{\pi\in \S_n} 
	 t^{\des(\pi)}  s^{\xdes(\pi)}  r^{\fix(\pi)}\Bigr)  {u^n}
	 = (1-t)\sum_{m\geq 0} t^m \rho(S_m((1-t)u)).
\end{equation}
Comparing the coefficients of $t^0, t^1, t^2$ in \eqref{eq:no:deno} with
the explicit values of $\rho(S_1(u))$ and $\rho(S_2(u))$ previously obtained, we derive
\begin{align*}
	P_0(u)
		&=  \frac{1}{1- r u }; \\
		P_1(u)
		&= 
\frac{u^{2} s}{{\left(1-r u \right)}^{2} {\left(1-2 \, u \right)}};
		 \\
		P_2(u)
		&=  
		\frac{u^3}{(1-ru)^3(u^2-3u+1)(1-2u)}  \times \\
		& \qquad \left(
	\frac{ {\left(3 \, r u - r - 2 \, u\right)} {\left(u - 1\right)}  }{ {1-2 u }} s 
	+ \frac{{\left(-3 \, r u + 2 \, u^{2} + r - 4 \, u + 2\right)} u }{1-3u} s^2 \right).\nonumber
\end{align*}

To end this section, we connect our results with two kinds of Genocchi numbers, and the new statistic $\xdes$ with earlier work of Ehrenborg and Steingr\'imsson~\cite{es}.

$\bullet$ For $m=1,2,\ldots$, the coefficient of $[s^m u^{2m}]$ in $\rho(S_m(u))|_{r=0}$ are
$$
1,2,8,56,608, \ldots 
$$
which are the Genocchi numbers, second kind, or Genocchi medians.
In fact, by Theorem~\ref{gfF:frac} this coefficient is the the coefficient of
$[t^m s^m u^{2m}]$ in $F(t,s,0;u)$, i.e., the coefficient of $[t^m s^m]$ in
\begin{equation*}
	\Bigl( \sum_{\pi\in D_{2m}} 
	 t^{\des(\pi)}  s^{\xdes(\pi)} \Bigr) \frac {1}{(1-t)^{2m+1}},
\end{equation*}
which is equal to the number of derangements $\pi$ in $D_{2m}$ such that $\des(\pi)=\xdes(\pi)=m$. By \cite{Dumont1994Rand} we know that the Genocchi number of second kind 
is the number of derangements $\sigma$ on $\{1,2,\cdots, 2m\}$ such that 
for $\sigma(i)>i$ iff $i$ is odd, which is equivalent to the condition $\des(\sigma)=\xdes(\sigma)=m$.

\medskip

$\bullet$ For $m=1,2,\ldots$, the coefficient of $[s^m u^{2m}]$ in 
$\rho(R_m(u))|_{r=1}$ are
$$
1,3,17,155,2073, \ldots 
$$
which are the Genocchi numbers, first kind. In fact, 
by Theorem~\ref{gfF:frac} this coefficient is the the coefficient of
$[t^m s^m u^{2m}]$ in $F(t,s,1;u)$, i.e., the coefficient of $[t^m s^m]$ in
\begin{equation*}
	\Bigl( \sum_{\pi\in \S_{2m}} 
	 t^{\des(\pi)}  s^{\xdes(\pi)} \Bigr) \frac {1}{(1-t)^{2m+1}},
\end{equation*}
which is equal to the number of permutations $\pi$ in $\S_{2m}$ such that $\des(\pi)=\xdes(\pi)=m$. By \cite{Dumont1974,Dumont1994Rand} we know that the Genocchi number of first kind 
is the number of permutations $\sigma$ on $\{1,2,\cdots, 2m\}$ such that 
for $\sigma(i)>i$ iff $i$ is odd, which is equivalent to the condition $\des(\sigma)=\xdes(\sigma)=m$.
\medskip

$\bullet$ Although the definition of $\xdes$ might seem a little peculiar, it has disguisedly showed up in the literature. In~\cite[Def.~4.1]{es}, Ehrenborg and Steingr\'imsson introduced the notion of ``excedance run'' on $ab$-words (certain equivalence classes on permutations determined by their excedance sets), which is essentially the same as our $\xdes$, defined on permutations. More precisely, for any permutation $\pi$, the number of crossing descents of $\pi$ equals the number of (excedance) runs of the $ab$-word of $\pi$.


\section{Patterns on \texorpdfstring{$k$-arrangements}{k-arrangements}}
\label{sec:5}
In this section, we denote $\max(w)$ and $\min(w)$ the maximal and the minimal letters of a word $w$ over integers, respectively.
\subsection{Pattern avoiding \texorpdfstring{$3$-arrangements}{3-arrangements} in permutation form}
The following enumeration result was suspected in~\cite[Conj.~2]{bs}.
\begin{theorem}
The number of $3$-arrangements of $[n]$ whose permutation form avoids any single pattern of length 3 is $C(n + 2)-2^n$.
\end{theorem}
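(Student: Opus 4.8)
The plan is to work entirely on the permutation-form side via the bijection $\pf_3\colon\A_n^3\to\P_n^3$, so that the quantity to compute is $|\P_n^3(\sigma)|$. The first step is to pin down the structure of $\P_n^3$: a word $w$ of length $n$ over the alphabet $\{\neg 2,\neg 1\}\cup\Z_{>0}$ lies in $\P_n^3$ if and only if its positive subword $\Pos(w)$ is a reduced permutation (its positive letters are exactly $1,2,\ldots,m$, each once, where $m$ is the number of positive letters), while the remaining $n-m$ positions carry letters from $\{\neg 1,\neg 2\}$ arbitrarily. Reconstructing $(\pi,\phi)=\pf_3^{-1}(w)$ is immediate from such a $w$: the $\neg 1$- and $\neg 2$-positions are the fixed points coloured $\neg 1,\neg 2$; reading the positive positions $p_1<\cdots<p_m$ and setting $\pi(p_i)=p_{\Pos(w)(i)}$ recovers the base permutation; and the fixed points of $\Pos(w)$ are precisely the fixed points coloured $\neg 3$. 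As a consistency check I would verify $|\P_n^3|=\sum_{m}\binom nm m!\,2^{\,n-m}$, which already matches $C(n+2)-2^n$ for $n\le 2$, where every word is too short to contain a length-$3$ pattern.

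Next I would exploit the one symmetry available. Reversal $w\mapsto w_n\cdots w_1$ maps $\P_n^3$ to itself, since the positive subword of a reversed word is again a reduced permutation and the two negative letters are untouched; hence $\sigma$ and its reverse are Wilf-equivalent over $\P_n^3$, collapsing the six patterns into the three classes $\{123,321\}$, $\{132,231\}$, $\{213,312\}$. Complement and inverse are unavailable because the alphabet has repeated negative letters, so these classes must be treated separately. The key observation for each is that an occurrence of a length-$3$ pattern uses at most two negative letters, and only the pair $(\neg 2,\neg 1)$ in that left-to-right order, because the three chosen letters must be pairwise distinct and every negative letter is smaller than every positive one. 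Consequently every forbidden configuration falls into a short explicit list; for $\sigma=123$, for instance, it is exactly: three increasing positives, a negative followed by an ascending pair of positives, or $\neg 2$ before $\neg 1$ before a positive, with analogous lists in the other two classes.

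With the forbidden configurations isolated, I would set up a generating function for each class by decomposing an avoiding word according to its positive part $\Pos(w)=\rho\in\S_m(\sigma)$ together with the admissible insertions of the colours $\neg 1,\neg 2$ into the $n-m$ non-positive slots. Using the classical Catalan decomposition of $\sigma$-avoiding permutations (e.g.\ the decomposition of a $132$-avoiding permutation around its maximum) to decide which slots can safely receive a $\neg 1$ or a $\neg 2$, this yields a functional equation for $A_\sigma(x):=\sum_n|\P_n^3(\sigma)|\,x^n$. Solving it — by the quadratic method, or the kernel method in the classes that require a catalytic variable — I would check that each equals the target
\[
A_\sigma(x)=\frac{C(x)-1-x}{x^{2}}-\frac{1}{1-2x},\qquad C(x)=\frac{1-\sqrt{1-4x}}{2x},
\]
whose coefficient of $x^n$ is $C(n+2)-2^n$.

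The main obstacle I anticipate is precisely the bookkeeping of the negative insertions in the presence of the pattern: the set of ``safe'' slots for a $\neg 1$ or $\neg 2$ depends on the relative order of the positive letters of $\rho$ and differs from pattern to pattern, so the three classes require genuinely different decompositions, and no single symmetry forces their answers to agree a priori. Getting each functional equation into a form whose solution collapses to the clean expression above — in particular, seeing the subtracted $1/(1-2x)$ emerge — is where the argument is most delicate, and I would expect at least one of the classes $\{132,231\}$ and $\{213,312\}$ to be more demanding than $\{123,321\}$, since their avoidance structure constrains both colours rather than essentially one.
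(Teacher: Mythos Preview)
Your plan is not wrong in outline, but it misses the decisive shortcut that the paper uses, and as written it is only a programme, not a proof: the three separate generating-function computations you promise to carry out are never actually done, and you yourself flag them as the delicate part.

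The key idea you are missing is that $\P_n^3$ is a \emph{union of rearrangement classes}: once you fix which positions carry $\neg 1$, which carry $\neg 2$, and which carry positive letters, the words in $\P_n^3$ with that profile are exactly the permutations of a fixed multiset. Savage and Wilf proved that for permutations of any multiset, the number avoiding a given pattern of length $3$ is independent of the pattern. Summing over profiles, all six length-$3$ patterns are automatically Wilf-equivalent over $\P_n^3$, and there is nothing to do beyond computing \emph{one} of them. Your reversal argument gets you from six classes to three; Savage--Wilf gets you from six to one in a single stroke.

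With that in hand, the paper picks $\sigma=312$ and bootstraps from the already-known $2$-arrangement count $C^{(2)}(x)=\sum_n|\P_n^2(312)|x^n=(C(x)-1)/x$. A word $w\in\P_n^3(312)$ either has no $\neg 2$ (so lies in $\P_n^2(312)$), or decomposes as $\alpha\,\neg 2\,\beta$ around its \emph{rightmost} $\neg 2$; the $312$-avoidance forces $\max(\alpha)\le\min(\beta)$, and then either $\beta$ contains a $\neg 1$ (forcing $\alpha\in\{\neg 1,\neg 2\}^*$) or it does not (so $\beta$ is purely positive and $\alpha$ is an arbitrary element of $\P^3(312)$). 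This gives a linear equation for $C^{(3)}(x)$ in terms of $C^{(2)}(x)$ and $C(x)$, which solves immediately to the target $\dfrac{C(x)-1-x}{x^2}-\dfrac{1}{1-2x}$. No catalytic variables, no case analysis across pattern classes.

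So: your reversal reduction is fine, but replace the three parallel computations by the Savage--Wilf observation and a single decomposition of the $312$ case. That turns your ``delicate bookkeeping'' into a five-line calculation.
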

\begin{proof}
First of all, it was shown by Savage and Wilf~\cite[Thm.~3]{sav} that the number of permutations (or rearrangements) of a given multiset that avoid a pattern of length $3$ is independent of the pattern. The same holds true for the permutation form of $k$-arrangements for any $k\geq1$, since $\P_n^k$ is  a union of
rearrangement classes~\cite[Prop.~3]{bs}. Thus, it is sufficient to prove 
\begin{equation}\label{312:3}
C^{(3)}(x):=1+\sum_{n\geq1}|\P_n^3(312)|x^n=\sum_{n\geq0}(C(n+2)-2^n)x^n=\frac{C(x)-1-x}{x^2}-\frac{1}{1-2x},
\end{equation}
where $C(x)$ is the generating function for Catalan numbers
$$
C(x):=\sum_{n\geq0}C(n)x^n=\frac{1-\sqrt{1-4x}}{2x}. 
$$

On the other hand, Blitvi\'c and Steingr\'imsson~\cite[Prop.~6]{bs} showed that 
\begin{equation}\label{312:2}
C^{(2)}(x):=1+\sum_{n\geq1}|\P_n^2(312)|x^n=\frac{C(x)-1}{x}. 
\end{equation}
We view $\P_n^3(312)$ as the disjoint union of $\P_n^2(312)$ and $\bar{\P}_n^3(312):=\P_n^3(312)\setminus\P_n^2(312)$.  
Any $w=w_1\ldots w_n\in\bar{\P}_n^3(312)$ with $w_j$ being the rightmost letter $\neg2$ can be written  as $w=\alpha\,\neg2\,\beta$ with $\max(\alpha)\leq \min(\beta)$. This decomposition can be fully characterized according to the following two cases:
\begin{itemize}
\item if $\beta$ contains the letter $\neg 1$, then we have $\reduction(\beta)\in\P_{n-j}^2(312)\setminus\P_{n-j}^1(312)$ and $\alpha\in\{\neg1,\neg2\}^{j-1}$;
\item otherwise $\beta$ has purely positive letters, and $\reduction(\beta)\in\P_{n-j}^1(312)$, $\alpha\in\P_{n-j}^3(312)$. 
\end{itemize}
This decomposition is reversible and in terms of generating function gives 
$$
C^{(3)}(x)-C^{(2)}(x)=x(C^{(2)}(x)-C(x))(1-2x)^{-1}+xC^{(3)}(x)C(x).
$$
Combining this with~\eqref{312:2} yields~\eqref{312:3} after simplification using Maple. 
\end{proof}

\subsection{The statistic \texorpdfstring{$\des$ on pattern avoiding $2$-arrangements}{des on pattern avoiding 2-arrangements} in permutation form}
This subsection is devoted to the classification of the $\des$-Wilf equivalences  for patterns of length $3$ for permutation form of  $2$-arrangements. These $\des$-Wilf equivalences were stated as Conjectures~3 and~4 in~\cite[Sec.~3.6]{bs}.

\begin{theorem}
The distribution of $\des$ on $2$-arrangements of $[n-1]$ whose permutation form avoids any single one of the patterns $213,312,231$ or $132$, is given by the triangle sequence A108838 in~\cite{oeis}, which counts, among other things, parallelogram polyominoes of semiperimeter $n+1$ having $k$ corners, and has formula $\frac{2}{n+1}\binom{n+1}{k+2}\binom{n-2}{k}$. 
\end{theorem}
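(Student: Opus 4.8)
The plan is to realize every permutation form in $\P_{n-1}^2$ as a shuffle of a reduced permutation with several copies of the minimal letter $\neg 1$, and to read off both pattern avoidance and the statistic $\des$ from this shuffle. First I would record the structural description: applying $\pf_2$, an element of $\P_{n-1}^2$ is exactly a word $w$ of length $n-1$ over $\{\neg 1\}\cup\{1,2,\ldots\}$ whose positive subword $\Pos(w)$ is a reduced permutation, the copies of $\neg 1$ occupying an arbitrary set of positions; this matches the arrangement count $\sum_j\binom{n-1}{j}(n-1-j)!$. Since a classical length-$3$ pattern has distinct entries and $\neg 1$ is the global minimum, a copy of $\neg 1$ can only play the role of the smallest entry of an occurrence. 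A short case check then turns ``$w$ avoids $\sigma$'' into ``$\Pos(w)$ avoids $\sigma$'' together with a barrier condition at each $\neg 1$: for $\sigma=312$ every $\neg 1$ must have all preceding positive letters smaller than all following ones (an increasing barrier); for $\sigma=213$ the barrier is reversed; for $\sigma=132$ the positive letters after each $\neg 1$ must increase; and for $\sigma=231$ the positive letters before each $\neg 1$ must decrease. (Mere equinumerosity of the four classes is already covered by Savage--Wilf together with $\cite[\text{Prop.~3}]{bs}$, but the $\des$-refinement needs the analysis below.)

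Next I would treat the representative pattern $\sigma=312$. The increasing-barrier condition forces the $\neg 1$'s to cut $\Pos(w)$ into value-increasing blocks, so that $w=G_0\,N_1\,G_1\cdots N_d\,G_d$, where each $N_i$ is a single $\neg 1$ and each $G_i$ is a possibly empty $312$-avoiding permutation of a consecutive value-range; conversely every such datum yields an avoider. Moreover $\des(w)=\sum_i\des(G_i)+\#\{i:G_{i-1}\neq\emptyset\}$, because the only boundary descents are those where a positive letter is immediately followed by a $\neg 1$. Writing $C(x,t)=\sum_{m\ge 0}x^m\sum_{\pi\in\S_m(312)}t^{\des\pi}$, the Narayana generating function, which satisfies $C=1+xC(1-t+tC)$, the block decomposition gives $\sum_{\ell}x^\ell\sum_{w\in\P_\ell^2(312)}t^{\des w}=C/\bigl(1-x(1-t+tC)\bigr)$, and the functional equation collapses the denominator to $1/C$, so this generating function equals $C(x,t)^2$. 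The pattern $213$ is entirely analogous (decreasing blocks, identical boundary-descent bookkeeping), again yielding $C(x,t)^2$.

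The delicate point, and the \emph{main obstacle}, is $\sigma\in\{132,231\}$: here the barrier imposes no value ordering across the $\neg 1$'s, so the blocks are no longer independent and one must control the interleaving of values on the two sides of the maximal positive letter. I would decompose such a $w$ as $\alpha\,M\,\beta$ around its largest positive letter $M$, record the available value-range with a catalytic variable, and solve the resulting kernel-type functional equation, whose output is again $C(x,t)^2$. Equivalently one may produce a $\des$-preserving bijection from $\P_{n-1}^2(132)$ and $\P_{n-1}^2(231)$ onto $\P_{n-1}^2(312)$, using that the internal blocks are Narayana-distributed for all four patterns; note that the repeated minimum $\neg 1$ prevents the naive reverse and complement symmetries from staying inside $\P_{n-1}^2$, which is precisely why a bespoke argument is required. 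Obtaining $C(x,t)^2$ for all four patterns at once establishes the claimed $\des$-equidistribution.

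Finally I would extract coefficients. Setting $u=C-1$ turns the functional equation into $u=x(1+u)(1+tu)$, and Lagrange inversion applied to $C^2=(1+u)^2$ gives, for $\ell=n-1$, the value $[x^\ell t^k]C^2=\tfrac{2}{\ell}[u^{\ell-1}](1+u)^{\ell+1}(1+tu)^\ell=\tfrac{2}{\ell}\binom{\ell+1}{k+2}\binom{\ell}{k}$, which equals the A108838 entry $\tfrac{2}{n+1}\binom{n+1}{k+2}\binom{n-2}{k}$ after a standard binomial identity; summing over $k$ (i.e.\ at $t=1$) yields $C(n)$, in agreement with the Catalan count of parallelogram polyominoes of semiperimeter $n+1$. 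The routine steps are the case check producing the barriers and the final binomial simplification; the genuinely hard step is the coupled analysis for $132$ and $231$.
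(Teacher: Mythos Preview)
Your treatment of $312$ (and $213$) is correct and in fact cleaner than the paper's. The paper decomposes a $312$-avoider around its rightmost $\neg1$, obtains a system of two functional equations, eliminates, and lands on the quadratic
\[
t^2x^2F^2-(t^2x^2-2tx^2+x^2-2x+1)F+1=0,
\]
which it then matches against the OEIS generating function. Your block decomposition gives $F=C^2$ in one line from $C=1+xC(1-t+tC)$, and one checks immediately that $C^2$ satisfies the paper's quadratic. The Lagrange inversion at the end is also a genuine addition: the paper never extracts the closed form $\frac{2}{n+1}\binom{n+1}{k+2}\binom{n-2}{k}$, it only identifies the generating function.

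The gap is that you do not actually carry out $132$ and $231$, and you overestimate their difficulty. No catalytic variable and no bespoke bijection are needed. For $231$, write $w=\alpha\,m\,\beta$ around the largest positive letter $m$; avoidance of $231$ with $m$ as the ``$3$'' forces $\max(\alpha)\le\min(\beta)$. The key observation you are missing is that this inequality yields a clean dichotomy: either $\alpha$ contains no positive letter, so $\alpha\in\{\neg1\}^{*}$ and $\mathrm{red}(\beta)$ is an arbitrary element of $\P^2(231)$; or $\alpha$ contains a positive letter, whence $\min(\beta)\ge1$ and $\beta$ is a pure $231$-avoiding permutation. Writing $H$ for the $\des$-generating function over $\P^2(231)$ this gives directly
\[
H=\frac{1}{1-x}+\frac{x}{1-x}\bigl(1+t(H-1)\bigr)+\Bigl(H-\frac{1}{1-x}\Bigr)x\bigl(1+t(C-1)\bigr),
\]
and eliminating $C$ via its Narayana equation shows $H$ satisfies the same quadratic as $F$, hence $H=C^2$. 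The pattern $132$ is the mirror: $\min(\alpha)\ge\max(\beta)$ forces either $\beta\in\{\neg1\}^{*}$ or $\alpha$ purely positive. So the ``main obstacle'' dissolves once you notice that the $\neg1$'s, being a global minimum, cannot straddle the value-barrier induced by the maximum.
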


\begin{proof}
It is known (cf.~\cite[Sec.~2.3]{pe}) that the size generating function 
$$
N=N(t,x):=1+\sum_{n\geq1}x^n\sum_{\pi\in\S_n(\sigma)}t^{\des(\pi)},
$$
where $\sigma$ is one of the patterns $213,312,231$ or $132$, satisfies the functional equation 
\begin{equation}\label{eq:nara}
txN^2-(1-x+tx)N+1=0.
\end{equation}

First we consider the pattern $312$. Any $w=w_1\cdots w_n\in\P_n^2(312)\setminus\P_n^1(312)$ with $w_j$ being the rightmost letter $\neg1$  can be written as $w=\alpha\,\neg1\,\beta$ with $\max(\alpha)\leq \min(\beta)>0$ such that $\alpha\,\neg1\in\tilde{\P}_{j}^2(312)$ and $\reduction(\beta)\in\P_{n-j}^1(312)=\S_{n-j}(312)$, where $\tilde{\P}_j^2(312)$ denotes the set of words $w\in\P_j^2(312)$ whose last letter is $\neg1$. Moreover, we have 
$$
\des(w)=\des(\alpha\,\neg1)+\des(\beta)\quad\text{and}\quad\des(\alpha\,\neg1)=\des(\alpha)+\chi(w_{j-1}\neq\neg1),
$$
where $\chi(\mathsf{S})$ equals $1$, if the statement $\mathsf{S}$ is true; and $0$, otherwise. Let us introduce 
$$
F(t,x):=1+\sum_{n\geq1}x^n\sum_{w\in\P_n^2(312)}t^{\des(w)}\quad\text{and} \quad G(t,x):=\sum_{n\geq1}x^n\sum_{w\in\tilde{\P}_n^2(312)}t^{\des(w)}.
$$
 The above decomposition then gives the system of equations
$$
\begin{cases}
\,\,F=N+GN,
\\
\,\,G=tx(F-1-G)+x(1+G).
\end{cases}
$$
Solving this system of equations yields $N=\frac{(1-x+tx)F}{1+txF}$. Substituting this into~\eqref{eq:nara} results in
\begin{equation}\label{eq:312}
t^2x^2F^2-(t^2x^2-2tx^2+x^2-2x+1)F+1=0.
\end{equation}
Comparing with the generating function for sequence A108838 in~\cite{oeis} proves the desired result for pattern $312$. The proof for the pattern $213$ is identical and will be omitted. 

Next we consider the pattern $231$. Let $\bar{\P}_n^2(231)$ denote the set of $w\in\P_n^2(231)$ with $\max(w)\neq\neg1$. Any $w=w_1\cdots w_n\in\bar{\P}_n^2(231)$ with the largest letter $w_j=m>0$ can be written as $\alpha\,m\,\beta$, where $\max(\alpha)\leq \min(\beta)$. We have two cases:
\begin{itemize}
\item if $\max(\alpha)=\neg1$ (or $\alpha$ is empty), i.e., $\alpha$ is a word with all letters being $\neg1$, then $\reduction(\beta)\in\P_{n-j}^2(231)$ (possibly empty);
\item otherwise $\max(\alpha)\ge 1$, then $\alpha\in\bar{\P}_{j-1}^2(231)$ and $\reduction(\beta)\in\P_{n-j}^1(231)$ (possibly empty). 
\end{itemize}
By this decomposition, if we define 
$$
H=H(t,x):=1+\sum_{n\geq1}x^n\sum_{w\in\P_n^2(231)}t^{\des(w)},
$$
then 
$$
H=\frac{1}{1-x}+\frac{x(1+t(H-1))}{1-x}+\biggl(H-\frac{1}{1-x}\biggr)x(1+t(N-1)). 
$$
Thus, we have
$N=\frac{H(tx^2-x^2+2x-1)+1}{tx(Hx-H+1)}$. Substituting this into~\eqref{eq:nara} results in
$$
t^2x^2H^2-(t^2x^2-2tx^2+x^2-2x+1)H+1=0,
$$
which proves the statement for the pattern $231$ after comparing with~\eqref{eq:312}. The proof for the pattern $132$ is the same as for $231$ and thus is omitted. The proof of the theorem is now completed. 
\end{proof}

Finally, we deal with the patterns $321$ and $123$, thus completing the classification of all six patterns of length $3$, in terms of their $\des$-Wilf equivalences on $\P_n^2$. Our proof of the following connection is algebraic. A bijective proof would be interesting. 

\begin{theorem}\label{thm:321}
The distribution of $\des$ on $2$-arrangements of $[n-1]$ whose permutation form avoids the pattern $321$, is given by the triangle sequence A236406 in~\cite{oeis}, which counts $321$-avoiding permutations of $[n]$ with $k$ peaks.
\end{theorem}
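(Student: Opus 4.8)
The plan is to mirror the algebraic strategy of the two preceding theorems: isolate the relevant permutation-enumeration input, set up a generating-function decomposition of the $2$-arrangements, derive a polynomial equation for the resulting series, and match it against the generating function recorded for A236406 in~\cite{oeis}. Write $F(t,x):=1+\sum_{n\ge1}x^n\sum_{w\in\P_n^2(321)}t^{\des(w)}$; by the index shift in the statement it suffices to show that $xF(t,x)$ is the generating function of A236406. The combinatorial heart is the following clean description of $\P_n^2(321)$, which I would isolate as a lemma: a word $w$ with letters in $\{\neg1\}\cup\{1,2,\dots\}$ lies in $\P_n^2(321)$ if and only if its positive subword $\Pos(w)$ avoids $321$ and the positive letters occurring before the \emph{last} $\neg1$ of $w$ form an increasing sequence. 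The ``if'' direction uses that $\neg1$, being the unique minimal value, can only play the role of the final ``$1$'' in an occurrence of $321$, so any such occurrence would force a descent among positive letters preceding some $\neg1$, hence preceding the last one; the ``only if'' direction is immediate.

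With this description the second step is to organize $F$ according to the positive subword. Fix $\tau=\Pos(w)$, a $321$-avoiding word which after reduction is some $\tau\in\S_m(321)$, and record the gap $g\in\{0,\dots,m\}$ (the number of positive letters to its left) into which the last $\neg1$ falls; the lemma forces $g\le\ell(\tau)$, where $\ell(\tau)$ is the length of the initial increasing run of $\tau$ (the position of its first descent, or $m$ if $\tau$ is increasing). The remaining copies of $\neg1$ may be freely distributed among the gaps $0,1,\dots,g$. A short bookkeeping of how an inserted block of $\neg1$'s changes $\des$ — a nonempty block just after a positive letter adds exactly one descent unless that position was already a descent of $\tau$, whereas a block before the first positive letter adds none — shows that each interior gap $1,\dots,g-1$ contributes an independent factor $1+\tfrac{tx}{1-x}$, the leftmost gap (before all positive letters) a factor $\tfrac1{1-x}$, and the distinguished gap $g$ a factor $t^{\,\chi(g\notin\DES(\tau))}\tfrac{x}{1-x}$. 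Summing the resulting geometric series over $g$ expresses $F$ in terms of the refined generating function $\mathcal N(t,x;v):=\sum_{\tau}x^{|\tau|}t^{\des(\tau)}v^{\ell(\tau)}$ of $321$-avoiding permutations by descents and initial-run-length, evaluated at $v=1$ and at $v=\tfrac{1-x+tx}{1-x}$, together with the explicitly known contribution of the increasing permutations.

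The third step is to evaluate $\mathcal N$. Building a $321$-avoiding permutation by inserting the value $1$ at the front of a permutation of $\{2,\dots,m\}$ interacts transparently with $\ell$: the legal insertion positions are exactly those within the current initial increasing run, so $v$ is a genuine catalytic variable and one gets a linear functional equation for $\mathcal N(v)$ whose kernel can be cleared by the classical kernel method, yielding a quadratic for $\mathcal N(1)$ and, after back-substitution into the expression for $F$, a quadratic for $F$ itself. Comparing this quadratic with the algebraic generating function listed for A236406 (equivalently, checking that the two series agree up to the order forced by the quadratic's degree) completes the proof.

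I expect the main obstacle to be precisely this last, computational, stage. The kernel-method solution of the $\ell$-refined series $\mathcal N$ introduces radicals, and the elimination needed to produce the quadratic satisfied by $F$, followed by the identification with the A236406 generating function, is unlikely to be clean by hand; as already happened for the patterns $231,312$ in the previous theorem, I anticipate pushing the final simplification through with a computer algebra system. A secondary point requiring care is the correct separation of the ``first descent'' case $g=\ell(\tau)$ from the generic case $g<\ell(\tau)$ in the geometric sum, since the factor $t^{\,\chi(g\notin\DES(\tau))}$ behaves differently for increasing and non-increasing $\tau$; handling this correctly is what forces the appearance of both $\mathcal N(\cdot;1)$ and $\mathcal N(\cdot;\tfrac{1-x+tx}{1-x})$ in the formula for $F$.
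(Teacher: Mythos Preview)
Your proposal is correct and shares its combinatorial core with the paper's proof: both hinge on the observation that $w\in\P_n^2(321)$ if and only if $\Pos(w)\in\S_m(321)$ and all copies of $\neg1$ sit in gaps no later than the leftmost descent of $\Pos(w)$, and both therefore reduce the problem to a $(\des,\ldes)$-refined enumeration of $321$-avoiding permutations.  The execution differs in two places.  First, you parametrize the insertion of $\neg1$'s by the gap $g$ containing the \emph{last} one and then sum a geometric series, which forces you to evaluate the refined series $\mathcal N$ at two points; the paper instead takes the direct product over the $\ldes(\tau)+1$ admissible gaps, so a single substitution $p=1+\tfrac{tx}{1-x}$ suffices (and, as a byproduct, the extra variable $q$ tracking plateaux comes for free, feeding into the subsequent $123$ case).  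Second, and more substantively, to compute the refined series $\mathcal N(t,x;v)$ the paper transports the problem to Dyck paths via Krattenthaler's bijection and solves a first-return decomposition there (Lemma~\ref{decom:dyck}), whereas you set up a catalytic equation by inserting the smallest value and invoke the kernel method.  Both routes yield the same quadratic; yours avoids the auxiliary bijection, while the paper's Dyck-path route keeps the algebra slightly lighter and makes the comparison with the peak generating function of Bukata et al.\ more direct.
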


In order to prove Theorem~\ref{thm:321}, we need to compute the joint distribution of the number of descents and the position of the leftmost descent on $321$-avoiding permutations. We will apply Krattenthaler's classical bijection~\cite{kr} (see also~\cite{eli}) from Dyck paths to $321$-avoiding permutations. 
   
A {\em Dyck path} of semilength $n$ is a lattice path in $\N^2$ from $(0,0)$ to
$(n,n)$ using the {\em east step} $(1,0)$ and the {\em north step} $(0,1)$, which does
not pass above the line $y=x$. The {\em height of an east step} in a Dyck path is the number of north steps before this east step. For the sake of convenience, we represent a Dyck path  as $d_1d_2\cdots d_n$, where $d_i$ is the height of its $i$-th east step. See Fig.~\ref{ex:varphi} for the Dyck path $012224566$. Denote by $\mathcal{D}_n$ the set of all Dyck paths of semilength $n$. In particular, we denote by $$\ID_n=01\cdots n-1\;\text{and}\; \id_n=12\cdots n$$ the zigzag Dyck path of semilength $n$ and the identity permutation of length $n$, with $\ID_0$ and $\id_0$ being the empty path and the empty permutation, respectively. We will use the description of Krattenthaler's bijection  $\psi:\mathcal{D}_n\rightarrow\S_n(321)$ in~\cite{kl}. Given a Dyck path $D=d_1d_2\cdots d_n\in\mathcal{D}_n$,  define $\psi(D)=\pi=\pi_1\pi_2\cdots\pi_n$, where 
\begin{itemize}
\item $\pi_i=d_i+1$ if $d_i\neq d_{i+1}$ or $i=n$; otherwise
\item  if $i$ is the $j$-th smallest integer in $\{k\in[n-1]: d_k=d_{k+1}\}$, then $\pi_i$ is the $j$-th smallest integer in $[n]\setminus\{d_1+1,d_2+1,\ldots,d_n+1\}$.
\end{itemize}
See Fig.~\ref{ex:varphi} for  a  visualization of this bijection for the Dyck path $012224566$.
\begin{figure}
\begin{center}
\begin{tikzpicture}[scale=.5]
\draw[step=1,color=gray](0,0) grid (9,9);
\draw [very thick](0,0)--(1,0)--(1,1)--(2,1)--(2,2)--(5,2)--(5,4)--(6,4)--(6,5)--(7,5)--(7,6)--(9,6)--(9,9);
\draw(.5,-0.5) node{{$1$}};\draw(1.5,-0.5) node{{$2$}};
\draw(2.5,-0.5) node{{$\bf 4$}};\draw(3.5,-0.5) node{{$\bf 8$}};
\draw(4.5,-0.5) node{{$3$}};
\draw(5.5,-0.5) node{{$5$}};\draw(6.5,-0.5) node{{$6$}};
\draw(7.5,-0.5) node{{$\bf 9$}};
\draw(8.5,-0.5) node{{$7$}};
\draw(8.5,6.5) node{{$\times$}};
\draw(6.5,5.5) node{{$\times$}};
\draw(5.5,4.5) node{{$\times$}};
\draw(4.5,2.5) node{{$\times$}};
\draw(1.5,1.5) node{{$\times$}};
\draw(0.5,0.5) node{{$\times$}};
\draw(7.5,8.5) node{{$\circ$}};
\draw(3.5,7.5) node{{$\circ$}};
\draw(2.5,3.5) node{{$\circ$}};
\end{tikzpicture}
\end{center}
\caption{Krattenthaler's bijection $\psi:\mathcal{D}_n\rightarrow\S_n(321)$.}
\label{ex:varphi}
\end{figure}
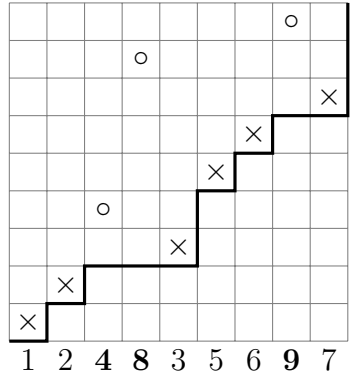

Let us introduce the following three statistics for $D\in\mathcal{D}_n$:
\begin{itemize}
\item $\hill(D)$, the number of hills of $D$, where a {\em hill} of a Dyck path is an east step touching the diagonal $y=x$ and followed immediately by a north step.
\item $\seg(D)$, the number of {\bf seg}ments of $D$, where a {\em segment} is a maximal string of at least two consecutive east steps of the same height;
\item $\lseg(D)=i$,  if the $i$-th east step is the  last  step of the {\bf l}eftmost {\bf seg}ment  of $D$. Otherwise, $D=\ID_n$ has no segments, then we let $\lseg(D)=n+1$. In particular, $\lseg(\ID_0)=1$. 
\end{itemize}
Continuing with our Dyck path in Fig.~\ref{ex:varphi}, we have  $\hill(D)=\seg(D)=2$ and $\lseg(D)=5$. For a permutation $\pi\in\S_n$, let 
\begin{equation}\label{def:ldes}
\ldes(\pi):=\min\{i:\pi_i>\pi_{i+1} \text{ or $i=n$ }\}
\end{equation}
be the the position of the {\em{\bf\em l}eftmost {\bf\em des}cent} of a permutation $\pi$. In particular, $\ldes(\id_0)=0$. The following property is clear from the above description of $\psi$. 

\begin{lemma}\label{lem:kra}
For each $n\ge 0$, the bijection $\psi:\mathcal{D}_n\rightarrow\S_n(321)$ transforms  $(\hill,\seg,\lseg) D$ to $(\fix,\des,\ldes+1)\psi(D)$. 
\end{lemma}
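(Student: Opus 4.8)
The plan is to verify the three coordinate equalities $\hill(D)=\fix(\psi(D))$, $\seg(D)=\des(\psi(D))$ and $\lseg(D)=\ldes(\psi(D))+1$ separately, all from a single structural analysis of the two cases defining $\psi$. Recall that the heights satisfy $0=d_1\le d_2\le\cdots\le d_n$ with $d_i\le i-1$. Call $i$ a \emph{J-position} if $d_i\neq d_{i+1}$ or $i=n$ (the first bullet in the definition of $\psi$) and an \emph{N-position} otherwise (the second bullet); thus the J-positions are precisely the last east steps of the maximal runs of equal height. I would first record that a J-position satisfies $\pi_i=d_i+1\le i$, so it is a weak non-excedance, with $\pi_i=i$ exactly when $d_i=i-1$. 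Since the $i$-th east step is a hill iff $d_i=i-1$ (it touches the diagonal) and it is followed immediately by a north step (equivalently, $i$ is a J-position), the hills of $D$ are exactly the J-positions $i$ with $\pi_i=i$.

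The main technical step, which I expect to be the crux, is to show that every N-position is a \emph{strict} excedance, $\pi_i>i$. The subtlety is that a J-position $j>i$ may still contribute a small value $d_j+1\le i$, so one cannot merely count positions; instead I would count \emph{distinct heights}. Put $H=\{d_1,\dots,d_n\}$, so the J-values are $\{h+1:h\in H\}$ and the N-values are their complement in $[n]$. For an N-position $i$, let $b=|\{j\le i:\ j\text{ is an N-position}\}|$, so $\pi_i$ is the $b$-th smallest N-value. Writing $a_i$ for the number of J-positions $\le i$ (hence $b=i-a_i$) and $c_i=|\{h\in H:h<i\}|$ for the number of J-values $\le i$, the number of N-values $\le i$ equals $i-c_i$; it therefore suffices to prove $b>i-c_i$, i.e.\ $c_i>a_i$. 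Each J-position $\le i$ contributes a distinct height $d_j\le i-1<i$, whence $a_i\le c_i$; moreover, since $i$ is an N-position, its height $d_i=d_{i+1}$ is not the last of its run, so the J-position of that run lies strictly to the right of $i$, and the height $d_i<i$ is counted in $c_i$ but not in $a_i$. This forces $c_i\ge a_i+1$, giving $\pi_i>i$. Combined with the first paragraph, the fixed points of $\psi(D)$ are exactly the J-positions with $d_i=i-1$, i.e.\ the hills, proving $\fix(\psi(D))=\hill(D)$.

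With the value comparison in hand ($\pi_i\le i$ on J-positions, $\pi_i>i$ on N-positions), the descent statement follows from a short case analysis of the types of $i$ and $i+1$. If both are J-positions then $\pi_i=d_i+1<d_{i+1}+1=\pi_{i+1}$; if $i$ is J and $i+1$ is N then $\pi_i\le i<i+1<\pi_{i+1}$; if both are N-positions then the N-values are assigned in increasing order along N-positions, so $\pi_i<\pi_{i+1}$; all three are ascents. The only remaining case, $i$ an N-position and $i+1$ a J-position, gives $\pi_{i+1}=d_{i+1}+1=d_i+1\le i<\pi_i$, a descent. Hence the descents of $\psi(D)$ are exactly the indices $i$ with $d_i=d_{i+1}$ and $i+1$ a J-position, that is, the positions immediately before the last step of each segment. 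This yields a bijection between descents of $\psi(D)$ and segments of $D$, so $\des(\psi(D))=\seg(D)$.

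Finally, the same description places the leftmost descent one step before the end of the leftmost segment, so $\ldes(\psi(D))=\lseg(D)-1$ whenever $D$ has a segment. It remains only to treat the degenerate case $D=\ID_n$, which has no segment: here every position is a J-position and $\psi(\ID_n)=\id_n$, so $\ldes=n$ by convention while $\lseg=n+1$, and the boundary values $\ldes(\id_0)=0$, $\lseg(\ID_0)=1$ match as well. This settles all three equalities; the one place requiring genuine care is the strict-excedance claim for N-positions, where counting distinct heights rather than positions is essential.
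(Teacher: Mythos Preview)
Your argument is correct. The three equalities are verified cleanly, and the one nontrivial point---that every N-position is a strict excedance---is handled properly: the passage from positions to \emph{distinct heights} via $c_i$ versus $a_i$, together with the observation that the height $d_i$ at an N-position is witnessed in $H\cap[0,i)$ but not by any J-position $\le i$, gives exactly the strict inequality $c_i>a_i$ needed. The descent analysis (four cases on the types of $i$ and $i+1$) is complete, and your identification of descents with ``second-to-last step of a segment'' immediately yields both $\seg=\des$ and $\lseg=\ldes+1$; the boundary cases $D=\ID_n$ and $n=0$ are checked against the stated conventions.

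As for comparison with the paper: the paper does not actually give a proof of this lemma---it simply asserts that the property ``is clear from the above description of~$\psi$''. Your write-up supplies the details the paper omits; in particular, your counting argument for the strict-excedance property of N-positions is precisely the content that a careful reader would have to reconstruct, and you have done so correctly.
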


We continue to compute the generating function  
\begin{align*}
C(t,p)=C(t,p;x)&:=p+\sum_{n\geq1}x^n\sum_{D\in\mathcal{D}_n}t^{\seg(D)}p^{\lseg(D)}=p+p^2x+(p+t)p^2x^2+\cdots\\
&=\frac{p}{1-px}+\sum_{n\geq1}x^n\sum_{\substack{\pi\in\S_n(321) \\ \pi\neq\id_n}}t^{\des(\pi)}p^{\ldes(\pi)+1}
\end{align*}
using the {\em classical decomposition} of Dyck paths. 

\begin{lemma}\label{decom:dyck}
The generating function $C(t,p;x)$ satisfies the algebraic functional equation
\begin{equation}\label{gen:C}
tp^2x^2C^2(t,1)+px(C-pxC-p)C(t,1)-(C-pxC-p)=0.
\end{equation}
\end{lemma}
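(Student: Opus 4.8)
The generating function $C(t,p;x)$ records each Dyck path by its semilength (variable $x$), its number of segments (variable $t$), and the end\nobreakdash-position of its leftmost segment (variable $p$). My plan is to prove \eqref{gen:C} in the equivalent factored form
\[
C(t,p;x)\,(1-px)=p+\frac{tp^2x^2\,C^2(t,1)}{1-px\,C(t,1)},
\]
which, after clearing denominators and writing $C=C(t,p;x)$, $C(t,1)=C(t,1;x)$, is exactly \eqref{gen:C} (with $A:=C-pxC-p$ one has $A(1-pxC(t,1))=tp^2x^2C^2(t,1)$). So it suffices to produce this identity combinatorially.

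The first step peels the start of the path. Every nonempty $D\in\mathcal{D}_n$ begins with a maximal run of east steps of height $0$; let $r\ge 1$ be its length. If $r=1$ the path opens with a hill, i.e.\ $D=EN\,D'$ for a unique $D'\in\mathcal{D}_{n-1}$; the hill is a singleton at height $0$, so $\seg(D)=\seg(D')$ and $\lseg(D)=\lseg(D')+1$ (this persists when $D'=\ID_{n-1}$, since $\lseg(\ID_{n-1})=n$). Summing the weight $px$ times that of $D'$ gives the contribution $px\,C(t,p;x)$; together with the empty path (weight $p$) this yields the terms $p+pxC$. It remains to evaluate the generating function $B$ of those $D$ whose opening run has length $r\ge 2$.

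For such a path the leftmost segment \emph{is} this opening run, so $\seg$ picks up a $+1$, one has $\lseg(D)=r$, and the $r$ east steps contribute $tp^rx^r$. After the run the path sits at $(r,0)$, that is, at deficit $r$ below the diagonal, and must climb back. Reading the remaining portion $Q$, the running minimum of the deficit decreases from $r$ down to $0$ in unit steps; for each level $i=r-1,r-2,\dots,0$ the stretch of $Q$ between the first arrival at deficit $i$ and the first subsequent descent below it is, after a downward translation, an arbitrary Dyck path $E_i$. The stretch at the initial base level $r$ must be empty, for otherwise its leading east step would lie at height $0$ and extend the opening run; hence $Q$ splits into \emph{exactly} $r$ Dyck paths $E_0,\dots,E_{r-1}$. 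Because consecutive excursions are separated by the record\nobreakdash-low north steps, no segment straddles a boundary, so $\seg(D)=1+\sum_i\seg(E_i)$, while every $E_i$ lies to the right of the leftmost segment and therefore carries no $p$. This gives $B=\sum_{r\ge 2}tp^rx^r\,C(t,1;x)^r=\tfrac{tp^2x^2C^2(t,1)}{1-px\,C(t,1)}$, and adding $p+pxC$ produces the displayed identity, hence \eqref{gen:C}.

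The main obstacle is the bookkeeping in this last decomposition: one must check that the climb\nobreakdash-back $Q$ breaks into exactly $r$ Dyck paths rather than $r+1$—this is precisely the forced emptiness of the base\nobreakdash-$r$ excursion—and that $\seg$ is additive across the record\nobreakdash-low cuts, so that the $t$\nobreakdash-weight of $D$ factors as claimed. A secondary but essential point is to keep straight which pieces are weighted by $C(t,p)$ and which by $C(t,1)$: hill\nobreakdash-peeling leaves the entire tail $D'$ still within reach of the leftmost descent and so retains the variable $p$, whereas in the run\nobreakdash-$\ge 2$ case every excursion lies beyond the leftmost segment and must be specialized at $p=1$.
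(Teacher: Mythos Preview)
Your proof is correct and the algebra checks out: with $A=C-pxC-p$ your identity $A(1-pxC(t,1))=tp^2x^2C^2(t,1)$ is exactly~\eqref{gen:C}. The hill-peeling for $r=1$ and the staircase decomposition for $r\ge2$ are both sound; in particular, your additivity $\seg(D)=1+\sum_i\seg(E_i)$ holds because any two consecutive east steps in a segment satisfy $d_i=d_{i+1}$ (no intervening north step), so no segment can cross a record-low $N$, and your verification that the base-level-$r$ excursion is forced to be empty is exactly the maximality of $r$.

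Your route, however, differs from the paper's. Both arguments split off the hill case $r=1$ identically (this is the paper's set $\mathcal{B}_n$, giving $B=pxC$). For $r\ge2$ the paper uses the \emph{first-return} decomposition: it cuts $D$ at its first return to the diagonal into $(D_1,D_2)$, then further distinguishes whether $D_1$ begins with a hill (in which case $\lseg(D)=2$) or not (in which case $\lseg(D)=1+\lseg(D_1)$), obtaining the recursive equation $C-B-p=tp^2xB(t,1)C(t,1)+px(C-B-p)C(t,1)$. Your argument instead peels off the \emph{entire} opening run of length $r$ at once, recognizes it immediately as the leftmost segment, and breaks the remainder into $r$ independent Dyck excursions along the record-low north steps. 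This is more direct: it identifies $\lseg(D)=r$ without recursion and yields the closed form $\sum_{r\ge2}t(pxC(t,1))^r$ for the $r\ge2$ contribution in one stroke, whereas the paper's version requires a secondary case split on $D_1$ and a substitution $B=pxC$ to reach the same equation. The paper's approach has the virtue of reusing the most classical Dyck decomposition; yours has the virtue of matching the statistic $\lseg$ to the decomposition parameter $r$ from the outset.
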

\begin{proof}
Let $\mathcal{B}_n$ be the set of Dyck paths in $\mathcal{D}_n$ that begin with an east step followed immediately by a north step. If we introduce 
$$
B(t,p;x):=\sum_{n\geq1}x^n\sum_{D\in\mathcal{B}_n}t^{\seg(D)}p^{\lseg(D)},
$$
then clearly 
\begin{equation}\label{gen:B}
B(t,p;x)=pxC(t,p;x).
\end{equation}

For $n\geq2$, a Dyck path $D=d_1\cdots d_n\in\mathcal{D}_n\setminus\mathcal{B}_n$  with $\min\{i\geq2: d_{i+1}=i\text{ or $i=n$}\}=j$ can be decomposed uniquely into a pair $(D_1,D_2)$  of Dyck paths, where $D_1=d_2d_3\cdots d_j\in\mathcal{D}_{j-1}$ and $D_2=(d_{j+1}-j)(d_{j+2}-j)\cdots(d_n-j)\in\mathcal{D}_{n-j}$ (possibly empty). This decomposition is reversible and satisfies the following properties: 
$$
\lseg(D)=
\begin{cases}
2,\quad&\text{if $D_1\in\mathcal{B}_{j-1}$}\\
1+\lseg(D_1), &\text{otherwise}
\end{cases}
$$
and
$$
\seg(D)=\seg(D_1)+\seg(D_2)+\chi(D_1\in\mathcal{B}_{j-1}).
$$
Turning this decomposition into generating functions yields 
\begin{equation}\label{gen:A}
C-B-p=tp^2xB(t,1)C(t,1)+px(C-B-p)C(t,1).
\end{equation}
Substituting~\eqref{gen:B} into~\eqref{gen:A} gives~\eqref{gen:C} after simplification. 
\end{proof}

We are ready to prove Theorem~\ref{thm:321}.

\begin{proof}[{\bf Proof of Theorem~\ref{thm:321}}] Setting $p=1$ in~\eqref{gen:C} and solving for $C(t,1)$ gives 
\begin{equation}\label{321des}
C(t,1)=\frac{1-\sqrt{-4tx^2+4x^2-4x+1}}{2x(tx-x+1)}.
\end{equation}
Substituting this into~\eqref{gen:C} and solving for $C=C(t,p;x)$ yields 
\begin{equation}\label{des:ldes}
C-\frac{p}{1-px}=\frac{tp^2(2tx^2-2x^2+2x-1+\sqrt{-4tx^2+4x^2-4x+1})}{(tx-x+1)(px-1)(2tx-2x+2-p+p\sqrt{-4tx^2+4x^2-4x+1})}.
\end{equation}
Let $\tilde{C}(p)=(C-\frac{p}{1-px})/p^2$.
Then  $\tilde{C}(p)$ is the  size generating function for $321$-avoiding permutations with at least one descent by the pair $(\des,\ldes-1)$.
For  $w\in\P_n^2(321)$, let 
$$
\plat(w):=|\{i\in[n-1]:w_i=w_{i+1}\}|
$$
be the number of {\em plateaux} of $w$. 
 Since each permutation form $w\in\P_n^2(321)$, whose permutation part $\pi$ is a $321$-avoiding permutation, can be obtained from $\pi$ by inserting some copies of $\neg1$ into the spaces not after the leftmost descent slot of $\pi$, we have 
\begin{align*}
f(t,q;x):=1+\sum_{n\geq1}x^n\sum_{w\in\P_n^2(321)} t^{\des(w)}q^{\plat(w)}
	=\Bigl(1+\frac{x}{1-qx}\Bigr)^2\tilde{C}(p)+\Bigl(1+\frac{x}{1-qx}\Bigr)\frac{1}{1-px},
\end{align*}
where we set $p=1+\frac{tx}{1-qx}$, and the case with $\pi=\id_n, n\ge 0$ is dealt with separately to form the second product. Combining this relationship with~\eqref{des:ldes} results in 
\begin{equation}\label{des:plat}
f(t,q;x)=\frac{(1-2tx^2-qx+2x^2-x+(qx-x-1)S)(1+x-qx)}{2x^2(tx-x+1)(q^2x-2qx+tx-q+2)},
\end{equation}
where 
$S:=\sqrt{1+4x(x-tx-1)}$.
Setting $q=1$ in~\eqref{des:plat} yields
$$
\frac{1-2tx^2+2x^2-2x-\sqrt{1+4x(x-tx-1)}}{2x^2(tx-x+1)^2},
$$
which proves the theorem after comparing with the size generating function for $321$-avoiding permutations by the number of peaks derived recently by Bukata et al. in~\cite[Thm.~3]{pud}. 
\end{proof}
\begin{remark}
The expression~\eqref{321des} was first proved by Barnabei et al.~\cite{bbs}. Our expression~\eqref{des:ldes} is a generalization of~\eqref{321des}. See also~\cite{kl} for a different generalization of~\eqref{321des}.  
\end{remark}

For  $w=w_1\cdots w_n\in\P_n^2(321)$, let $w^r=w_n\cdots w_1\in\P_n^2(123)$ be the {\em reversal} of $w$. Clearly, we have 
$$
\des(w^r)+1=n-\des(w)-\plat(w). 
$$
Thus, making the substitution $q\leftarrow t^{-1}$, $x\leftarrow tx$ and $t\leftarrow t^{-1}$ in~\eqref{des:plat}  gives the following generating function formula for counting $123$-avoiding $2$-arrangements in permutation form by $\des+1$. 
\begin{theorem}
We have the generating function formula
\begin{align}\label{123:2des}
1+\sum_{n\geq1}x^n\sum_{w\in\P_n^2(123)} t^{\des(w)+1} &=\frac{1-x-tx-2tx^2+2t^2x^2-(tx-x+1)T}{2tx^2(tx-x-1)(1-\frac{2t}{tx-x+1})},
\end{align}
where $T:=\sqrt{1+4tx(tx-x-1)}$.
\end{theorem}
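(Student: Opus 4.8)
The plan is to transport the bivariate generating function $f(t,q;x)$ from \eqref{des:plat}, which already records the joint distribution of $(\des,\plat)$ over $\P_n^2(321)$, to the $123$-avoiding side by means of the reversal involution. First I would check that $w\mapsto w^r$ is a bijection from $\P_n^2(321)$ onto $\P_n^2(123)$. Reversal fixes the multiset of letters of $w$, so $w^r$ lies in the same rearrangement class and hence in $\P_n^2$ (recall from Section~\ref{sec:5} that $\P_n^2$ is a union of rearrangement classes); and a word avoids $321$ if and only if its reversal avoids $123$, since reading an occurrence of one pattern from right to left produces an occurrence of the other. Thus reversal is a well-defined bijection between the two avoidance classes of equal size.

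Next I would pin down the statistic transfer, which is the content of the identity $\des(w^r)+1=n-\des(w)-\plat(w)$ quoted just before the theorem. For any word $w=w_1\cdots w_n$ the $n-1$ consecutive pairs partition into descents ($w_i>w_{i+1}$), plateaux ($w_i=w_{i+1}$), and strict ascents ($w_i<w_{i+1}$), so the number of strict ascents of $w$ equals $n-1-\des(w)-\plat(w)$. A descent of $w^r$ at position $i$ reads $w_{n-i+1}>w_{n-i}$, i.e.\ a strict ascent of $w$, and this correspondence is a bijection on $[n-1]$; hence $\des(w^r)=n-1-\des(w)-\plat(w)$, which gives the displayed relation after adding~$1$.

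With the bijection and the statistic identity in hand, the generating function on the $123$-side is obtained by a single monomial substitution. Under $t\leftarrow t^{-1}$, $q\leftarrow t^{-1}$, $x\leftarrow tx$ the monomial $x^n t^{\des(w)}q^{\plat(w)}$ becomes $(tx)^n t^{-\des(w)}t^{-\plat(w)}=x^n t^{\,n-\des(w)-\plat(w)}=x^n t^{\des(w^r)+1}$, so $f(t^{-1},t^{-1};tx)$ is exactly $1+\sum_{n\geq1}x^n\sum_{w\in\P_n^2(123)}t^{\des(w)+1}$. Applying the very same substitution to the closed form \eqref{des:plat} and simplifying (conveniently with a computer algebra system) should yield the asserted quotient; in particular the radical $S=\sqrt{1+4x(x-tx-1)}$ is carried to $\sqrt{1+4tx(tx-x-1)}=T$, matching the claimed expression.

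The genuinely nontrivial work lies in Theorem~\ref{thm:321}, so here the only point to watch is that the substitution, and in particular replacing $t$ by $t^{-1}$, is legitimate as an operation on formal power series. This is harmless because each exponent $\des(w^r)+1=n-\des(w)-\plat(w)$ is a positive integer (indeed at least $1$, since $\des(w)+\plat(w)\le n-1$), so no negative powers of $t$ survive and the substituted series is a genuine element of $\Z[t][[x]]$ whose closed form is \eqref{des:plat} after substitution; the remaining passage to the displayed formula is routine algebra. I therefore expect the main (and only real) obstacle to be bookkeeping in the final simplification rather than any conceptual difficulty.
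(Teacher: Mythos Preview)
Your proposal is correct and follows essentially the same approach as the paper: the reversal bijection $w\mapsto w^r$ between $\P_n^2(321)$ and $\P_n^2(123)$, the identity $\des(w^r)+1=n-\des(w)-\plat(w)$, and the substitution $t\leftarrow t^{-1}$, $q\leftarrow t^{-1}$, $x\leftarrow tx$ into~\eqref{des:plat}. Your write-up is in fact more detailed than the paper's, which simply states the identity as ``clearly'' holding and asserts that the substitution yields the theorem.
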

The refinement $\sum_{w\in\P_n^2(123)} t^{\des(w)}$ of Catalan numbers appears to be new and the first few polynomials are
\begin{align*}
1,\quad2,\quad3+2t,\quad2+10t+2t^2, \quad2+12t+26t^2+2t^3,\quad 2+12t+56t^2+60t^3+2t^4. 
\end{align*}

\subsection{\texorpdfstring{Length-$3$ patterns for $1$-arrangements}{Length-3 patterns for 1-arrangements} in derangement form}

In general, the enumeration  of pattern avoiding $1$-arrangements in derangement form is harder than that in permutation form. Our computer program indicates that only one Wilf-equivalence exists for length-$3$ patterns on 
$1$-arrangements in derangement form. The enumerative sequences for the number of the other four Wilf-equivalence classes turn out to be new in OEIS (see Table~\ref{3-derange}). 

\begin{table}
\hrule

{\small
\begin{tabbing}
xxxxxxxxxxxx\=xxxxxxxxxxxxxxxxxxxxxxxxxxxxxxxxxxxxxxxx\= xxxxxxxxxxxxxxx\=xxxxxxxxxxxxx\= \kill

Pattern $p$\> First values of $|\D_n^1(p)|$: \> counted?\> in OEIS?\\
\\
$321$\> $ 1, 2, 5, 15, 48, 159, 538, 1850, 6446,22712,\ldots$ \> Algebraic g.f.   \> A289589?   \\
$132$\> Wilf-equivalent to pattern $321$ \> Algebraic g.f.  \> A289589?   \\
$231$\> $1,2,5,14,42,131,420,1376,4595,15573,\ldots$ \> open  \> new   \\
$123$\> $1, 2, 6, 19, 61, 202, 688, 2367,8316,29356,\ldots$\> open  \> new   \\
$312$\> $1, 2, 4, 10, 27, 78, 235, 736,2366,7772,\ldots$\> open  \> new    \\
$213$\>  $1, 2, 6, 19, 63, 210, 716, 2462,8604,30296,\ldots$ \> open  \> new 
\vspace{.1in}
\end{tabbing}
}

\hrule
\vspace{.2in}

\caption{Length-$3$ patterns  for $1$-arrangements in derangement form}
\label{3-derange}
\end{table}


We begin with  a refinement of  an intriguing  result due to Robertson,  Saracino and  Zeilberger~\cite{rsz} which asserts that $\fix$ has the same distribution over $\S_n(321)$ and $\S_n(132)$. Let $\pi\in\S_n$ be a permutation. Recall from~\eqref{def:ldes} that $\ldes(\pi)$ is the position of the  leftmost descent of $\pi$. Similarly, let 
$$
\rdes(\pi):=n-\max(\{i:\pi_i>\pi_{i+1}\}\cup\{0\})
$$
be the complement of the position of the {\em{\bf\em r}ightmost {\bf\em des}cent} of $\pi$. Let $\exc(\pi):=|\{i:\pi_i>i\}|$ and $\aexc(\pi):=|\{i:\pi_i<i\}|$ be the number of {\em excedances} and {\em anti-excedances} of $\pi$, respectively. Also, let  $\lmax(\pi)$ (resp.~$\rmin$) denote the number of {\em {\bf\em l}eft-to-right {\bf\em max}ima} (resp.~{\em {\bf\em r}ight-to-left {\bf\em min}ima} ) of $\pi$. It is clear that 
\begin{equation}\label{inverse}
(\exc,\lmax)\pi=(\aexc,\rmin)\pi^{-1}.
\end{equation}
\begin{lemma}\label{lem:knuth}
There exists a bijection $\mathcal{K}: \S_n(321)\rightarrow\S_n(132)$ such that for each $\pi\in\S_n$,
$$
(\fix,\exc,\ldes)\pi=(\fix,\aexc,\rdes)\mathcal{K}(\pi).
$$
\end{lemma}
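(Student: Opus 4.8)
The plan is to realize $\mathcal{K}$ as a composition of Catalan bijections built around Krattenthaler's map $\psi\colon\mathcal{D}_n\to\S_n(321)$, which is already available together with its statistic dictionary in Lemma~\ref{lem:kra}. A first convenient move is to symmetrize the target by the reverse--complement. Define $\pi^{\mathrm{rc}}$ by $\pi^{\mathrm{rc}}(i)=n+1-\pi(n+1-i)$. Then $\pi\in\S_n(321)$ iff $\pi^{\mathrm{rc}}\in\S_n(321)$, and a direct check of the definitions gives
\begin{equation*}
\fix(\pi^{\mathrm{rc}})=\fix(\pi),\qquad \aexc(\pi^{\mathrm{rc}})=\exc(\pi),\qquad \rdes(\pi^{\mathrm{rc}})=\ldes(\pi),
\end{equation*}
where the last identity follows from $\Des(\pi^{\mathrm{rc}})=\{n-i:i\in\Des(\pi)\}$ together with the conventions in \eqref{def:ldes} and the definition of $\rdes$. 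Consequently it suffices to construct a bijection $\mathcal{L}\colon\S_n(321)\to\S_n(132)$ \emph{preserving} the triple $(\fix,\aexc,\rdes)$, and then set $\mathcal{K}:=\mathcal{L}\circ(\cdot)^{\mathrm{rc}}$.

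To build $\mathcal{L}$ I would route both classes through Dyck paths. On the $\S_n(321)$ side I first upgrade Lemma~\ref{lem:kra} by recording how $\exc$ is read off a path: inspecting the defining rules of $\psi$, a position $i$ of $\psi(D)$ receives a (large) complementary value precisely when $d_i=d_{i+1}$, while every position with $d_i\neq d_{i+1}$ (or $i=n$) carries $\pi_i=d_i+1\le i$; since $d_i\le i-1$ always, the excedances of $\psi(D)$ are exactly the former positions, so
\begin{equation*}
\fix(\psi(D))=\hill(D),\qquad \exc(\psi(D))=|\{i\in[n-1]:d_i=d_{i+1}\}|,\qquad \ldes(\psi(D))=\lseg(D)-1 .
\end{equation*}
Thus $(\fix,\aexc,\rdes)\psi(D)$ is a triple of explicit Dyck-path statistics (hills, path length minus fixed points minus internal segment steps, and the right-hand analogue of $\lseg$). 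I would then produce a companion bijection $\psi'\colon\mathcal{D}_n\to\S_n(132)$ carrying the \emph{same} three Dyck statistics to $(\fix,\aexc,\rdes)$, and put $\mathcal{L}:=\psi'\circ\psi^{-1}$. The natural device here is the transpose involution on $\mathcal{D}_n$ (reflection across the anti-diagonal), which fixes $\hill$, interchanges horizontal and vertical segments, and exchanges leftmost- and rightmost-segment data; running a Krattenthaler-type insertion into the $132$-class on the transposed path is what turns the excedance count into the anti-excedance count and the leftmost descent into the rightmost descent.

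The main obstacle is exactly the construction and verification of $\psi'$: one must check that its output genuinely avoids $132$ (the pattern-avoidance of the image is the analogue of the nontrivial direction of Krattenthaler's theorem), and that the three statistic identities hold. Of these, the identity $\fix(\psi'(D))=\hill(D)$ is the most delicate, since fixed points are a global feature of a permutation rather than a local feature of the path, whereas $\aexc$ and $\rdes$ are governed locally by segment data and should fall out of the transpose together with the $\exc$-dictionary above. As a reality check the multiset of triples $(\fix,\exc,\ldes)$ over $\S_3(321)$ equals the multiset of $(\fix,\aexc,\rdes)$ over $\S_3(132)$, and the resulting $\mathcal{K}$ refines the Robertson--Saracino--Zeilberger equidistribution of $\fix$ over $\S_n(321)$ and $\S_n(132)$ from \cite{rsz}; the identity \eqref{inverse} provides the sanity link $\exc(\pi)=\aexc(\pi^{-1})$ that underlies the excedance/anti-excedance interchange exploited throughout.
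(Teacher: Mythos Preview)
Your reverse--complement reduction is correct and neatly converts the statement into the search for a bijection $\mathcal{L}\colon\S_n(321)\to\S_n(132)$ preserving $(\fix,\aexc,\rdes)$. The problem is that your proof stops exactly where the content lies: you propose to manufacture a companion map $\psi'\colon\mathcal{D}_n\to\S_n(132)$ via the transpose of the Dyck path and ``a Krattenthaler-type insertion into the $132$-class,'' but you neither define this insertion nor verify any of its claimed properties. You yourself flag that $\fix(\psi'(D))=\hill(D)$ is ``the most delicate'' identity; in fact, for generic $132$-analogues of Krattenthaler's map this identity simply fails, and building one for which it holds amounts to reproving the Robertson--Saracino--Zeilberger theorem from scratch. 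As written, then, the proposal is a plan rather than a proof, with the essential bijection left unconstructed.

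The paper's argument avoids this work entirely by taking $\mathcal{K}(\pi)=(\pi')^{-1}$, where $\pi\mapsto\pi'$ is Knuth's classical bijection $\S_n(321)\to\S_n(132)$. Two off-the-shelf results do the heavy lifting: Elizalde--Pak~\cite{ep} gives $(\fix,\exc)\pi=(\fix,\exc)\pi'$, and Claesson--Kitaev~\cite{ck} gives $\ldes(\pi)=\lmax(\pi')$. Composing with the inverse and invoking~\eqref{inverse} turns $(\exc,\lmax)$ into $(\aexc,\rmin)$; the only thing left to check is the elementary observation that $\rmin(\sigma)=\rdes(\sigma)$ for every $\sigma\in\S_n(132)$, since in such a permutation the suffix starting at the letter~$1$ is increasing. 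So rather than engineering a new Dyck-path bijection, you should route through Knuth's map and inversion; everything you need about $\fix$ and $\exc$ is already in the literature, and your reverse--complement step becomes unnecessary.
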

\begin{proof}
The bijection $\mathcal{K}$ is a composition of a bijection due to Knuth and the inverse of permutations. For $\pi\in\S_n(321)$, let $\pi'$ be the $132$-avoiding permutation under Knuth's bijection (see the description in~\cite[Sec.~3.1]{ck} or~\cite{ep}). Then noting that a permutation $\sigma\in\S_n(132)$ if and only if $\sigma^{-1}\in\S_n(132)$, we define $\mathcal{K}(\pi)=(\pi')^{-1}$. It has been shown by Elizalde and Pak~\cite{ep} that $(\fix,\exc)\pi=(\fix,\exc)\pi'$ and by Claesson and  Kitaev~\cite[Sec.~6.4]{ck} that $\ldes(\pi)=\lmax(\pi')$. Thus, in view of~\eqref{inverse}, it remains to show that $\rmin(\sigma)=\rdes(\sigma)$ for each $\sigma\in\S_n(132)$. This follows from the observation that the suffix of $\sigma$ starting from the letter $1$ is monotonously increasing. 
\end{proof}

The next observation is obvious, but useful. 

\begin{Observation}\label{obser:der1}
Let $w\in\D_n^1$ be a word with weak derangement part $\Der(w)=\pi$. Then, $w$ is $321$-avoiding (resp.~$132$-avoiding) if and only if 
\begin{enumerate}
	\item $\pi$ is a $321$-avoiding (resp.~$132$-avoiding) derangement, and
	\item all copies of $\neg1$ appear not after  the leftmost  (resp.~not before the rightmost) descent slot of $\pi$.
\end{enumerate} 
\end{Observation}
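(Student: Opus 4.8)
The plan is to exploit that $\neg 1$ is the unique minimal letter of $w$ and that all of its copies are equal; consequently any occurrence of a length-$3$ pattern can involve \emph{at most one} copy of $\neg 1$, since a subsequence containing two equal letters never reduces to a permutation. Writing $w$ in the block form~\eqref{decom:1}, namely $w=B_0\pi_1B_1\pi_2\cdots B_{m-1}\pi_mB_m$ with $\pi=\Der(w)\in\D_m$ and each $B_i$ a (possibly empty) run of $\neg 1$'s filling slot~$i$, I would separate every potential forbidden subsequence into two cases, according to whether it lies entirely among the positive letters or uses exactly one $\neg 1$. The degenerate case $m=0$ (where $w$ is a block of $\neg 1$'s and both $\pi$ and every pattern condition are vacuous) is immediate and set aside.

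For the pattern $321$, an all-positive decreasing triple is precisely a $321$-occurrence in $\pi$, so ruling these out is exactly condition~(1). A mixed occurrence must, since $\neg 1$ is minimal, place the $\neg 1$ as the smallest (rightmost) entry, so that it is preceded by a positive descending pair $w_a>w_b$ playing the role ``$32$''. Hence $w$ admits a mixed $321$ if and only if some copy of $\neg 1$ lies to the right of an inversion of positive letters. Here I would use that, by the definition~\eqref{def:ldes} of $\ldes$, the prefix $\pi_1\pi_2\cdots\pi_{\ldes(\pi)}$ is strictly increasing; therefore the positive letters $\pi_1,\ldots,\pi_j$ preceding a $\neg 1$ in slot~$j$ contain an inversion if and only if $j>\ldes(\pi)$, that is, if and only if that $\neg 1$ sits strictly after the leftmost descent slot. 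Negating yields condition~(2), completing the $321$ equivalence.

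The $132$ case is the mirror image. Because $\neg 1$ is minimal, in a mixed $132$-occurrence it can only play the role of the initial entry ``$1$'', so such an occurrence is a $\neg 1$ \emph{followed} by a positive descending pair $w_a>w_b$. Dually to the previous paragraph, the suffix of $\pi$ lying strictly after its rightmost descent is strictly increasing, so the positive letters that follow a $\neg 1$ in slot~$j$ contain an inversion if and only if that slot lies strictly before the rightmost descent slot; forbidding this is exactly the requirement that all copies of $\neg 1$ appear not before the rightmost descent slot of $\pi$ (the quantity complementary to $\rdes(\pi)$). Together with the all-positive case, which says that $\pi$ is a $132$-avoiding derangement (condition~(1)), this establishes the claim.

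I expect no genuine obstacle here: the single real input is the increasing-prefix (respectively increasing-suffix) property of a permutation read up to its leftmost (respectively from its rightmost) descent, which turns condition~(2) into an exact transcription of ``no mixed forbidden pattern''. The only care needed is bookkeeping — correctly aligning slot indices with the leftmost and rightmost descent positions, and keeping the two orientations (``not after'' for $321$, ``not before'' for $132$) consistent — together with the elementary remark that two equal copies of $\neg 1$ can never both participate in a length-$3$ pattern.
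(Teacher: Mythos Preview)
Your argument is correct and is precisely the elementary verification the paper has in mind; the paper itself supplies no proof, calling the observation ``obvious'', and your case split (all-positive versus exactly one $\neg 1$) together with the increasing-prefix/suffix characterisation of $\ldes$ and the rightmost descent is exactly the intended reasoning.
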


We are ready to prove Theorem~\ref{der:wilf}.

\begin{proof}[{\bf Proof of Theorem~\ref{der:wilf}}] The first statement  that $\D_n^1(321)$ and $\D_n^1(132)$ have the same cardinality follows directly from Lemma~\ref{lem:knuth} and Observation~\ref{obser:der1}. 


Let $\mathcal{D}'_n$ be the set of Dyck paths $D\in\mathcal{D}_n$ such that $D$ has no hills.  In order to compute~\eqref{gen:321der}, we need to calculate the generating function
$$
D(p,x):=p+\sum_{n\geq1}x^n\sum_{\pi\in\D_n(321)}p^{\ldes(\pi)+1}=p+\sum_{n\geq1}x^n\sum_{D\in\mathcal{D}'_n}p^{\lseg(D)},
$$
where the second equality follows from Lemma~\ref{lem:kra}. Using the classical decomposition of Dyck paths as in the proof of Lemma~\ref{decom:dyck} we obtain
\begin{align}\label{0hill}
D(p,x) &= \frac{2p(1+2(1-p)x+\sqrt{1-4x})}{(1+2x+\sqrt{1-4x})(2-p+p\sqrt{1-4x})}.
\end{align}
By Observation~\ref{obser:der1} we have 
$$
 1+\sum_{n\geq1}|\D_n^1(321)|x^n=D\bigl(\frac{1}{1-x},x\bigr). 
$$
Combining this with~\eqref{0hill} we get~\eqref{gen:321der}, completing the proof. 
\end{proof}
\noindent
{\bf Acknowledgement.}
This work was supported
by the National Science Foundation of China grants 11871247 and  the project of Qilu Young Scholars of Shandong University.

\bigskip
\bigskip

\bibliographystyle{plain}


\end{document}